\documentclass{amsart}
\usepackage{amsmath,amsthm}
\usepackage{amsfonts,amssymb}
\usepackage{accents}
\usepackage{enumerate}
\usepackage{accents,color}
\usepackage{graphicx}

\hfuzz1pc

\addtolength{\textwidth}{0.5cm}

\newcommand{\lvt}{\left|\kern-1.35pt\left|\kern-1.3pt\left|}
\newcommand{\rvt}{\right|\kern-1.3pt\right|\kern-1.35pt\right|}

\newtheorem{thm}{Theorem}[section]
\newtheorem{cor}[thm]{Corollary}
\newtheorem{lem}[thm]{Lemma}
\newtheorem{prop}[thm]{Proposition}

\theoremstyle{remark}

 \def\la{{\langle}}
 \def\ra{{\rangle}}

 \def\i{{\mathrm{i}}} 
 \def\dd{{\mathrm{d} }} 
  
 \def\a{{\alpha}}
 \def\b{{\beta}}
 
 \def\k{{\kappa}}
 \def\t{{\theta}}
 \def\l{{\lambda}}
 \def\d{{\delta}}
 
 \def\s{\sigma}
 \def\la{{\langle}}
 \def\ra{{\rangle}}

 \def\CH{{\mathcal H}}

 \def\CP{{\mathcal P}}

 \def\NN{{\mathbb N}}

 \def\RR{{\mathbb R}}
  \def\SS{{\mathbb S}}
 \def\ZZ{{\mathbb Z}}

\def\lla{\langle{\kern-2.5pt}\langle}      
\def\rra{\rangle{\kern-2.5pt}\rangle}      

\newcommand{\wt}{\widetilde}
\newcommand{\wh}{\widehat}

\def\f{\frac}

\graphicspath{{./}}
\begin{document}
 
\title{Intertwining operators associated to dihedral groups}

\author{Yuan Xu}
\address{Department of Mathematics\\ University of Oregon\\ 
Eugene, Oregon 97403-1222.}\email{yuan@uoregon.edu}

\thanks{The author was supported in part by NSF Grant DMS-1510296.}

\date{\today}
\keywords{Intertwining operator, Dunkl operators, Dihedral group, orthogonal polynomials, generating function}
\subjclass[2010]{33C45, 44A20; Secondary 33C50, 33C80}

\begin{abstract} 
The Dunkl operators associated to a dihedral group are a pair of differential-difference operators that generate a 
commutative algebra acting on differentiable functions in $\RR^2$. The intertwining operator intertwines 
between this algebra and the algebra of differential operators. The main result of this paper is an integral 
representation of the intertwining operator on a class of functions. As an application, closed formulas for 
the Poisson kernels of $h$-harmonics and sieved Gegenbauer polynomials are deduced when one of the 
variables is at vertices of a regular polygon, and similar formulas are also derived for several other related
families of orthogonal polynomials. 
\end{abstract}

\maketitle

\section{Introduction}
\setcounter{equation}{0}

Let $G$ be a reflection group with a fixed positive root system $R_+$. Let $v \mapsto \k_v$ be a 
nonnegative multiplicity function defined on $R_+$ with the property that it is a constant on each conjugate 
class of $G$. Then the Dunkl operators \cite{D89} are defined by 
\begin{equation} \label{eq:Dunkl-Op}
    D_i f(x) = \partial_i  f(x) + \sum_{v \in R_+} \k_v \frac{f(x) - f (x \s_v)}{\la x,\s_v \ra} v_i,  \qquad i =1,2 , \ldots,d,
\end{equation}
where $x \s_v := x - 2\la x, v \ra v /\|v\|^2$ and $\la x,\s_v\ra$ is the dot product in $\RR^d$. These first order 
differential-difference operators commute in the 
sense that $D_i D_j = D_j D_i$ for $1 \le i, j \le d$. A linear operator, denoted by $V_\kappa$, that satisfies
the relations 
\begin{equation} \label{eq:Vk-defn}
    D_i V_\k  = V_\k \partial_i, \qquad 1 \le i \le d,
\end{equation}
is called an intertwining operator, which is uniquely determined if it also satisfies $V_\k 1 = 1$ and 
$V_\k \CP_n^d \subset \CP_n^d$, where $\CP_n^d$ denotes the space of homogeneous polynomials 
of degree $n$ in $d$ variables. 

The remarkable commuting property of the Dunkl operators allows a far reaching generalization of classical
analysis from $L^2$ with respect to the Lebesgue measure to weighted $L^2$ space on either $\RR^d$ or the 
unit sphere $\SS^{d-1}$, where the weight function, multiplied by $e^{-\|x\|^2}$ on $\RR^d$, is defined by
\begin{equation} \label{eq:h_k-general}
       h_\k(x) = \prod_{v\in R_+} |\la x , v\ra|^{\k_v}.
\end{equation}
The intertwining operator plays an essential role in the generalization. For example, the weighted Fourier 
transform has $V_\k [e^{i \la \cdot, x\ra}](y)$ in places of $e^{i \la x,y\ra}$ and the zonal harmonics in the weighted
setting is given by $V_\k [ C_n^{\l_\k} (\la x,\cdot\ra)](y)$, where $C_n^\l$ denotes the Gegenbauer
polynomial and $\l_\k = \sum_v \k_v + \frac{d-2}{2}$. 

In recent years, the Fourier analysis associated with reflection groups has attracted considerable attention; see,
for example, \cite{DaX, DX} and references therein. A large portion of the classical Fourier analysis has been 
extended to the weighted setting. However, much of finer analysis that relies essentially on reflection symmetry 
requires detail knowledge of the intertwining operator. This is the reason that finer analysis has been carried out 
up to now only in the case of $G= \ZZ_2^d$, for which the intertwining operator is given explicitly as an integral 
operator.

There have been several attempts for finding an integral representation of $V_\k f$ for other reflection groups. 
Partial results have been obtained for the symmetric group $S^3$ \cite{D95} and the dihedral group $I_4$ 
\cite{D-B2, X00}, but the results in these works are not as satisfactory since the weight function in the integral 
operator may not be nonnegative and, as a consequence, the positivity of the integral is not evident. In the $I_4$ 
case, the results are established for polynomials, not verified directly via \eqref{eq:Vk-defn}. 

In the present paper we consider the intertwining operator associated to dihedral groups. Let $I_k$ denote the 
dihedral group defined as the symmetric group of the $k$-th regular polygon. We choose the positive root system
$R_+ = \{v_j: 0 \le j \le k-1\}$ by 
\begin{equation} \label{eq:vj}
v_j = \left (\sin \left(\tfrac{j \pi}{k}\right),  - \cos \left(\tfrac{j \pi}{k}\right) \right).
\end{equation}
The reflection $x\s_j$ of $x = (x_1,x_2)$ in $v_j$ is given by 
\begin{equation} \label{eq:sj}
x \s_j = \left(\cos \left(\tfrac{2 j\pi}{k}\right) x_1 + \sin \left(\tfrac{2 j\pi}{k}\right) x_2, \, 
    \sin \left(\tfrac{2 j\pi}{k}\right) x_1-\cos \left(\tfrac{2 j\pi}{k}\right) x_2 \right).
\end{equation}
We consider the case that the multiplicity function $\k$ is a constant, which we denote by $\l$. In this setting, 
the Dunkl operators are given by
\begin{align}\label{eq:Dj}
\begin{split}
 D_1 f(x) & = \frac{\partial f}{\partial x_1} +  \l \sum_{j=0}^{k-1}  \frac{f(x) - f(x \s_j)}{\la x, v_j\ra} \sin  \left(\frac{j \pi}{k}\right),  \\
 D_2 f(x) & = \frac{\partial f}{\partial x_2} -  \l \sum_{j=0}^{k-1}  \frac{f(x) - f(x \s_j)}{\la x, v_j\ra}  \cos \left(\frac{j \pi}{k}\right).
\end{split}
\end{align}

Throughout this paper, we let $T^{k-1}$ denote the simplex defined by 
$$
  T^{k-1} := \{ u \in \RR^{k-1}:   u_1 \ge 0, \ldots, u_{k-1} \ge 0, \, \,  u_1+ \cdots + u_{k-1} \le 1\}.
$$
Our main result gives an integral representation for the intertwining operator on a class of functions. 

\begin{thm} \label{thm:main}
Let $f$ be a differentiable function on $\RR$ so that the integral in \eqref{eq:main} is finite. For $0 \le p \le 2k-1$, 
define 
$$
   F_p (x_1,x_2 ):= f \left(\cos  \left(\frac{ p \pi}{k}\right)x_1 + \sin \left(\frac{ p \pi}{k}\right) x_2 \right). 
$$
Then, for $k =2, 3, 4, \ldots$, the intertwining operator $V_\l$ for the dihedral group $I_k$ with one parameter $\l$ satisfies 
\begin{align} \label{eq:main}
 V_\l F_p(x_1,x_2) =&\,a_\l^{(k)} \int_{T^{k-1}} f \left ( \cos  \left(\frac{ p \pi}{k}\right) \big(c(u) x_1 + s(u) x_2\big) 
                               + \sin  \left(\frac{ p \pi}{k}\right) \big (c(u) x_2 - s(u) x_1\big) \right) \notag \\  
                               &    \qquad \qquad \qquad \qquad \qquad   \times u_0^\l \prod_{i=1}^{k-1} u_i^{\l-1}\dd u, 
\end{align}
where $u_0 := 1-u_1-\cdots - u_{k-1}$, 
$$
  c(u):= \sum_{j=0}^{k-1} \cos \left(\frac{2 j\pi}{k}\right) u_j \quad \hbox{and} \quad  
    s(u):= \sum_{j=0}^{k-1} \sin\left(\frac{2 j\pi}{k}\right) u_j
$$
and $a_\l^{(k)}$ is chosen so that $V_\l 1 =1$ or, given explicitly,  
$$
   a_\l^{(k)} =  \frac{\l \Gamma(\l)^k}{\Gamma( k\l+1)}.
$$
\end{thm}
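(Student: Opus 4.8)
The plan is to verify that the operator $U_\l$ which sends each ridge function $F_p$ to the right-hand side of \eqref{eq:main} fixes constants, preserves the degree of a homogeneous polynomial, and intertwines the Dunkl operators with the partial derivatives in the form $D_i\,U_\l F_p = U_\l(\partial_i F)_p$; an induction on degree then forces $U_\l F_p = V_\l F_p$ whenever $f$ is a polynomial, and the general case follows by approximating $f$ by polynomials, using that $V_\l$ extends to a positivity-preserving operator with $V_\l 1 = 1$ (hence contracts the supremum norm on balls) and that the integral in \eqref{eq:main} is likewise continuous in $f$ over the bounded interval covered by the argument of $f$.

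It is convenient to use the complex coordinate $z := x_1 + \i x_2$. One checks directly from \eqref{eq:sj} that the reflection acts by $z \mapsto z\s_j = e^{2\i j\pi/k}\bar z$, that $F_p(x) = f\big(\operatorname{Re}(e^{-\i p\pi/k}z)\big)$, and that the argument of $f$ in \eqref{eq:main} equals $\operatorname{Re}\big(e^{-\i p\pi/k}\,\overline{w(u)}\,z\big)$, where $w(u) := c(u)+\i s(u) = \sum_{j=0}^{k-1}e^{2\i j\pi/k}u_j$ and $\sum_{j=0}^{k-1}u_j=1$. So \eqref{eq:main} asserts $V_\l F_p(x) = a_\l^{(k)}\int_{T^{k-1}} f\big(\operatorname{Re}(e^{-\i p\pi/k}\overline{w(u)}z)\big)\,u_0^\l\prod_{i=1}^{k-1}u_i^{\l-1}\,\dd u$. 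Putting $f\equiv1$ reduces $U_\l 1 = 1$ to the Dirichlet integral $\int_{T^{k-1}}u_0^\l\prod_{i=1}^{k-1}u_i^{\l-1}\,\dd u = \Gamma(\l+1)\Gamma(\l)^{k-1}/\Gamma(k\l+1)$, which pins down $a_\l^{(k)}$; and when $f(t)=t^n$ the integrand is the $n$-th power of a linear form in $(x_1,x_2)$, so $U_\l F_p$ is homogeneous of degree $n$.

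The crux is the identity $D_i\,U_\l F_p = U_\l(\partial_i F)_p$ for $i=1,2$, where $(\partial_1 F)_p$ and $(\partial_2 F)_p$ denote the ridge functions built from $\cos(p\pi/k)f'$ and $\sin(p\pi/k)f'$. The first-order part of $D_i$ is handled by differentiating under the integral sign and the chain rule — but this produces a factor depending on $w(u)$ rather than the constant $\cos(p\pi/k)$ or $\sin(p\pi/k)$, so something has to compensate. That something is the difference part: since $z\s_j = e^{2\i j\pi/k}\bar z$ and the argument of $f$ is complex-linear in $z$, evaluating $U_\l F_p$ at $x\s_j$ gives an integral of the same shape but with the indices $\{0,1,\dots,k-1\}$ of $T^{k-1}$ permuted by an affine map of the form $m\mapsto j-p-m\pmod k$. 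This permutation is \emph{not} a symmetry of the weight $u_0^\l\prod_{i\ge1}u_i^{\l-1}$ — index $0$ carries exponent $\l$, the rest carry $\l-1$ — so it moves the distinguished index, and as $j$ runs over $\{0,\dots,k-1\}$ the distinguished index moves through every position. Substituting back, the difference part turns into an integral of $f$ against a linear combination, with coefficients $(v_j)_i/\la x,v_j\ra$, of the $k$ cyclically shifted weights; an integration by parts in the simplex variables — for which the asymmetry $\l$ versus $\l-1$ is precisely tuned — transfers a derivative onto $f$ and matches this against the first-order part, closing the identity. Carrying out this reconciliation is the main obstacle; the remaining steps are bookkeeping.

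With the intertwining identity available the proof closes by induction on the degree $n$. For $n=0$ it is the normalization above. Assuming \eqref{eq:main} for every ridge function built from a polynomial of degree $<n$, take $f(t)=t^n$; then $D_i\big(U_\l F_p - V_\l F_p\big) = U_\l(\partial_i F)_p - V_\l(\partial_i F)_p = 0$ by the inductive hypothesis, as $(\partial_i F)_p$ has degree $n-1$. But $U_\l F_p - V_\l F_p$ is homogeneous of degree $n\ge1$, and a homogeneous polynomial $g$ of positive degree with $D_1 g = D_2 g = 0$ must vanish: writing $g = V_\l h$ with $h\in\CP_n^2$, which is legitimate because $V_\l$ is a bijection of $\CP_n^2$, one gets $0 = D_i V_\l h = V_\l\partial_i h$, hence $\partial_i h = 0$, hence $h=0$ and $g=0$. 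Thus $U_\l F_p = V_\l F_p$ for every monomial, hence for every polynomial $f$ by linearity, and the approximation of the first paragraph extends this to every differentiable $f$ for which the integral in \eqref{eq:main} converges.
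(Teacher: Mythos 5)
Your overall strategy is exactly the paper's: verify $D_iU_\l F_p=U_\l(\partial_iF)_p$ directly, observing that reflection in $v_\ell$ shifts the angle and hence permutes the simplex variables by $m\mapsto \ell-p-m\pmod k$, that this permutation moves the distinguished index carrying the exponent $\l$ (versus $\l-1$), and that an integration by parts on the simplex — via $\frac{d}{du_{\ell-p}}\bigl[u_0u_{\ell-p}\prod_i u_i^{\l-1}\bigr]=\l(u_0-u_{\ell-p})\prod_i u_i^{\l-1}$ — converts the difference part into an integral against $f'$ that cancels the $u$-dependent factor produced by differentiating under the integral sign. All of these structural ingredients are correctly identified, and your closing argument (induction on degree, using that a homogeneous polynomial of positive degree killed by $D_1$ and $D_2$ vanishes because $V_\l$ is a bijection of $\CP_n^2$, followed by density for non-polynomial $f$) is actually spelled out more carefully than in the paper, which stops at the verification of the intertwining relations.

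However, there is a genuine gap at precisely the point you flag as ``the main obstacle.'' Asserting that the difference part ``matches this against the first-order part, closing the identity'' is not a proof of that fact; it is the fact to be proved, and it occupies the bulk of the paper's argument. Concretely, after the change of variables and integration by parts, the difference part of $D_1$ contributes an integral of $f'\bigl(r\Psi(\t,u)\bigr)$ against
$S(u)=2\sum_{\ell\ne p}\sin\bigl(\tfrac{\ell\pi}{k}\bigr)\sin\bigl(\tfrac{(\ell-p)\pi}{k}\bigr)u_{\ell-p\bmod k}$,
and one must show, via the product-to-sum identity
$2\sin\bigl(\tfrac{\ell\pi}{k}\bigr)\sin\bigl(\tfrac{(\ell-p)\pi}{k}\bigr)=\bigl(1-\cos\tfrac{2(\ell-p)\pi}{k}\bigr)\cos\tfrac{p\pi}{k}+\sin\tfrac{2(\ell-p)\pi}{k}\sin\tfrac{p\pi}{k}$
and the constraint $\sum_\ell u_\ell=1$, that $S(u)=\cos\bigl(\tfrac{p\pi}{k}\bigr)-\sum_{\ell}\cos\bigl(\tfrac{p\pi}{k}+\tfrac{2\ell\pi}{k}\bigr)u_\ell$, which is exactly the complement of the factor $\sum_j\cos\bigl(\tfrac{p\pi}{k}+\tfrac{2j\pi}{k}\bigr)u_j$ arising from $\partial_1$ of the integral; the analogous identity for $C(u)$ is needed for $D_2$. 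That the indices produced by the permutation line up with the indices in these trigonometric identities is the delicate point, and it is not ``bookkeeping'' that can be waved through: it is the content of the theorem. Until this verification is carried out, the proof is a correct and detailed plan rather than a proof.
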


Although the result does not give a full integral representation of the intertwining operator for the dihedral group, 
the explicit formula of the integral in \eqref{eq:main} is notable and suggestive for a possible final form. There 
is little methodology for identifying an integral transform that satisfies \eqref{eq:Vk-defn}, the discovery of our 
\eqref{eq:main} is motivated by an integral formula in \cite{X15}, see Lemma \ref{lem:integral} below, and is 
the result of trial and error, starting from $I_4$. Once the formula is identified, a proof can be given by a direct 
verification of \eqref{eq:Vk-defn}. 

As an application, we obtain a closed form formula for the Poisson kernel of the $h$-harmonics associated to 
the dihedral group $I_k$ when one of the variable is at vertices of a regular $k$-gon. With one parameter,
the $h$-harmonics associated with $I_k$ can be written in terms of the sieved Gegenbauer polynomials 
studied in \cite{AAA}, which are orthogonal polynomials on $[-1,1]$ with respect to the weight funciton
\begin{equation} \label{eq:weightSG}
  w_{\l \pm \f12}^{(k)}(t) =  |U_{k-1}(t)|^{2 \l} (1-t^2)^{\l \pm \f12}, \qquad k =1,2, 3, \ldots. 
\end{equation}
Our result leads to a closed form formula for the Poisson kernels of the sieved Gegenbauer polynomials, which
can also be used to derive properties of these polynomials. Furthermore, $h$-harmonics can also be related,
when $k$ is even, to another family of polynomials, which leads us to study orthogonal polynomials with 
respect to 
\begin{equation} \label{eq:weightSG2}
 w_{\l \pm \f12}^{(k), \pm 1}(t) = (1\pm t) |U_{k-1}(t)|^{2 \l} (1-t^2)^{\l-\f12}, \qquad k =1,2, 3, \ldots, 
\end{equation} 
for which we can also derive the closed formulas for their Poisson kernels.

The paper is organized as follows. In the next section, we recall essential results on the $h$-harmonics associated
to the dihedral group. The main result, Theorem \ref{thm:main}, is proved in Section 3. As applications of the main 
result, we derive closed form formulas for $h$-harmonics and sieved Gegenbauer polynomials in Section 4,
explicit bases of orthogonal polynomials in Section 5, and study orthogonal polynomials for the weight functions
\eqref{eq:weightSG2} in Section 6. Finally, in Section 7, we discuss product formulas of orthogonal polynomials
and its relation with the intertwining operator. 

\section{Dihedral symmetry}
\setcounter{equation}{0}

The dihedral group $I_k$ is the symmetric group of regular polygon of $k$ sides. We consider the case
of even $k$ and odd $k$ separately. 

\subsection{Dihedral group $I_{2k}$}
For the dihedral group $I_{2k}$, we choose the positive root system as 
$$
   v_j = \big(\sin (\tfrac{j \pi}{2k}), - \cos (\tfrac{j \pi}{2k})\big), \qquad j=0,1,\ldots, 2 k -1. 
$$
For $x = r (\cos \t, \sin \t)$, we have $\la x, v_j \ra = r\sin (\tfrac{j \pi}{2k} -\t)$. The reflection $\s_j$ is given by
$$ 
  x \s_j =  x - 2 \la x, v_j \ra v_j = (\cos(\tfrac{j \pi}{k} - \t), \sin (\tfrac{j \pi}{k} -\t)). 
$$
The group has two conjugacy classes, described by $v_{2j}$ and $v_{2j+1}$, respectively.  The Dunkl operator 
associated with the dihedral group with parameters $\l \ge 0$ and $\mu \ge 0$ are given by
\begin{align*}
 D_1f(x) &= \frac{\partial f}{\partial x_1} + \l \sum_{j=0}^{k-1} \frac{f(x) - f(x \s_{2j})}{\la x, v_{2j}\ra} \sin \left(\tfrac{j \pi}{k}\right) 
   + \mu \sum_{j=0}^{k-1} \frac{f(x) - f(x \s_{2j+1})}{\la x, v_{2j+1}\ra} \sin\left(\tfrac{(2j+1)\pi}{2k}\right), \\
 D_2f(x) & =\frac{\partial f}{\partial x_2} - \l \sum_{j=0}^{k-1} \frac{f(x) - f(x \s_{2j})}{\la x, v_{2j}\ra} \cos \left(\tfrac{j \pi}{k}\right)  
    - \mu \sum_{j=0}^{k-1} \frac{f(x) - f(x \s_{2j+1})}{\la x, v_{2j+1}\ra} \cos\left(\tfrac{(2j+1)\pi}{2k}\right).
\end{align*}
In this case, the weight function $h_\k$ in \eqref{eq:h_k-general} is given by 
$$
     h_{\l,\mu}^{(2 k)} (x_1,x_2) = r^{\l+\mu} |\sin (k \t)|^{\l} |\cos (k \t)|^{\mu}
$$
in polar coordinates $(x_1,x_2) = (r\cos \t, r \sin\t)$. In particular, for $I_2 = \ZZ_2^2$, the weight function is 
$h_{\l,\mu}^{(2)} (x_1,x_2) = |x_1|^\mu |x_2|^\l$ and, with $k=1$, 
\begin{align*}
 D_1f(x) &= \frac{\partial f}{\partial x_1} + \mu \frac{f(x) - f(-x_1,x_2)}{x_1}, \\
 D_2f(x) & =\frac{\partial f}{\partial x_2} + \l \frac{f(x) - f(x_1, - x_2)}{x_2}.
\end{align*}
The intertwining operator $V_{\l,\mu}$ for $I_2$ is given by an integral transform \cite[p. 232]{DX}
\begin{equation}\label{eq:intertwZ2}
 V_{\l,\mu}f(x_1,x_2) = c_\l c_\mu \int_{-1}^1 \int_{-1}^1 f(s x_1,t x_2) (1+s)(1-s^2)^{\mu-1}  (1+t)(1-t^2)^{\l-1} \dd s \dd t.
\end{equation}
No other satisfactory integral representation of the intertwining operator is known for other dihedral groups. 

Let $\CP_n^2$ be the space of homogeneous polynomials of two variables of degree $n$. A polynomial
$Y \in \CP_n^2$ is called an $h$-harmonics if $\Delta_h Y =0$ where $\Delta_h = D_1^2 + D_2^2$. 
Let $\CH_n(h_{\l,\mu}^{(2k)})$ be the space of $h$-harmonics associated to the dihedral group $I_k$. It is 
known that $h$-harmonics of different degrees are orthogonal; that is, 
\begin{equation} \label{eq:h-ortho}
  \int_{\SS^1} Y_n (\xi) Y_m(\xi) \left[ h_{\l,\mu}^{(2 k)}(\xi)\right]^2 \dd \s(\xi) =0, \qquad  \quad Y_n \in \CH_n(h_{\l,\mu}^{(2k)}). 
\end{equation}
As in the case of ordinary spherical harmonics, we know that $\dim \CH_0(h_{\l,\mu}^{(2k)}) = 1$ and 
$\dim \CH_n(h_{\l,\mu}^{(2k)}) = 2$ for $n \ge 1$. An orthogonal basis of $\CH_n(h_{\l,\mu}^{(2k)})$ can be given 
explicitly in terms of the Jacobi polynomials $P_n^{(\a,\b)}$. Let us define first the generalized Gegenbauer 
polynomials $C_n^{(\l,\mu)}$ by 
\begin{align}\label{eq:GGpoly}
\begin{split}
 C_{2n}^{(\l,\mu)} (t) & = \frac{(\l+\mu)_n}{(\mu+\f12)_n} P_n^{(\l-\f12,\mu-\f12)}(2 t^2-1), \\
  C_{2n+1}^{(\l,\mu)} (t) & = \frac{(\l+\mu)_{n+1}}{(\mu+\f12)_{n+1}}\, t  P_n^{(\l-\f12,\mu+\f12)}(2 t^2-1), 
\end{split}
\end{align}
which are orthogonal with respect to the weigh function 
$$
 w(t) = |t|^{2\mu}(1-t^2)^{\l-\f12}, \qquad t \in [ -1,1]. 
$$
 
In the polar coordinates $x_1 = r \cos \t$ and $x_2 = r \sin \t$, an $h$-harmonic $Y$ can be written as 
$Y(x_1,x_2) = r^n \wt Y(\t)$, where $\wt Y(\t) = Y(\cos \t, \sin \t)$. We state an explicit orthogonal basis 
for $\CH_n(h_{\l,\mu}^{(2k)})$ in $\wt Y_{n,1}$ and $\wt Y_{n,2}$ \cite{D92}. 

\begin{prop} \label{prop:basis}
For $n = m k + j$ with $0 \le j \le k-1$,  define
\begin{align} \label{eq:basis}
\begin{split}
\wt Y_{mk + j, 1}(\t) & = \frac{n+2\l+\delta_n}{2\l+2\mu} \cos j \t \, C_m^{(\l,\mu)}(\cos k \t) 
    - \sin j \t \sin k \t  \, C_{m-1}^{(\l+1,\mu)}(\cos k\t),  \\
\wt Y_{m k+ j, 2}(\t) & = \frac{n+2\l+\delta_n}{2\a+2\mu} \sin j \t \, C_m^{(\l,\mu)}(\cos k \t)
     - \cos j \t \sin k \t \, C_{m-1}^{(\l+1,\mu)}(\cos k \t),
\end{split}
\end{align}
where $\d_n = 2\b$ if $k$ is even and $\d_n = 0$ if $k$ is odd. 
Then $\{Y_{n,1}, Y_{n,2}\}$ is an orthogonal basis of $\CH_n(h_{\l,\mu}^{(2k)})$. 
\end{prop}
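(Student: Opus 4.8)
The plan is to reduce the assertion to three claims, using the structural facts already recalled for $\CH_n(h_{\l,\mu}^{(2k)})$: that it has dimension $2$ for $n\ge1$, and that $h$-harmonics of different degrees are orthogonal, \eqref{eq:h-ortho}. Granting these, it suffices to prove, for $n=mk+j$ with $0\le j\le k-1$ and $n\ge1$: (i) that $r^n\wt Y_{n,1}(\t)$ and $r^n\wt Y_{n,2}(\t)$ are homogeneous polynomials of degree $n$ in $(x_1,x_2)$; (ii) that both are annihilated by $\Delta_h=D_1^2+D_2^2$; and (iii) that $\wt Y_{n,1}$ and $\wt Y_{n,2}$ are nonzero and mutually orthogonal against $[h_{\l,\mu}^{(2k)}]^2$ on $\SS^1$. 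Then $\{Y_{n,1},Y_{n,2}\}$ is a pair of nonzero mutually orthogonal elements of the two-dimensional space $\CH_n(h_{\l,\mu}^{(2k)})$, hence an orthogonal basis, with orthogonality across degrees supplied by \eqref{eq:h-ortho}. For (i), I would use \eqref{eq:GGpoly} to expand $C_m^{(\l,\mu)}(\cos k\t)=\sum_i c_i(\cos k\t)^{m-2i}$ and $C_{m-1}^{(\l+1,\mu)}(\cos k\t)=\sum_i d_i(\cos k\t)^{m-1-2i}$, where only powers of a single parity occur, and then multiply a typical term of $\wt Y_{n,1}$ by $r^n$ to obtain expressions such as $(r^j\cos j\t)(r^k\cos k\t)^{m-2i}(r^2)^{ik}$ and $(r^j\sin j\t)(r^k\sin k\t)(r^k\cos k\t)^{m-1-2i}(r^2)^{ik}$; since $r^j\cos j\t=\operatorname{Re}(x_1+\i x_2)^j$, $r^j\sin j\t=\operatorname{Im}(x_1+\i x_2)^j$ (and likewise for $r^k\cos k\t$, $r^k\sin k\t$), while $r^2=x_1^2+x_2^2$, each such term is a homogeneous polynomial of degree $n$, and the same computation handles $\wt Y_{n,2}$.

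For (iii), the key is parity in $\t$: under $\t\mapsto-\t$ the factors $C_m^{(\l,\mu)}(\cos k\t)$, $C_{m-1}^{(\l+1,\mu)}(\cos k\t)$ and $\cos j\t$ are even while $\sin j\t$ and $\sin k\t$ are odd, and the two products in each line of \eqref{eq:basis} carry the same parity, so $\wt Y_{n,1}$ is even and $\wt Y_{n,2}$ is odd; since the weight on the circle, $[h_{\l,\mu}^{(2k)}(\cos\t,\sin\t)]^2=|\sin k\t|^{2\l}|\cos k\t|^{2\mu}$, is even, the integrand of $\int_{\SS^1}\wt Y_{n,1}\wt Y_{n,2}[h_{\l,\mu}^{(2k)}]^2\dd\s$ is odd and vanishes over the full circle. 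Nonvanishing is elementary: at $\t=0$ one has $\wt Y_{n,1}(0)=\frac{n+2\l+\d_n}{2\l+2\mu}C_m^{(\l,\mu)}(1)\ne0$ because $C_m^{(\l,\mu)}(1)\ne0$ by \eqref{eq:GGpoly}, and a similarly short computation (e.g.\ on the lowest nonzero Fourier mode) takes care of the odd function $\wt Y_{n,2}$.

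The substance is (ii). The route I would take is the polar decomposition of the $h$-Laplacian for $I_{2k}$, namely $\Delta_h=\partial_r^2+\frac{2\l_h+1}{r}\partial_r+\frac{1}{r^2}\Delta_{h,0}$ with $\l_h=\sum_{v\in R_+}\k_v$ the index of the weight and $\Delta_{h,0}$ the spherical $h$-Laplacian, a differential-difference operator acting on functions of $\t$ alone; for $Y=r^n\wt Y(\t)$ this gives $\Delta_hY=r^{n-2}\big(n(n+2\l_h)\wt Y+\Delta_{h,0}\wt Y\big)$, so (ii) is equivalent to the eigenvalue equation $\Delta_{h,0}\wt Y_{n,\ell}=-n(n+2\l_h)\wt Y_{n,\ell}$. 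One then writes $\Delta_{h,0}$ out explicitly for $I_{2k}$, applies it to \eqref{eq:basis}, and --- after passing to the Jacobi argument $2\cos^2 k\t-1$, in which the generalized Gegenbauer factors in \eqref{eq:GGpoly} become Jacobi polynomials --- reduces the eigenvalue equation to the Jacobi differential equation together with the first-order (derivative and contiguous) relations linking $C_m^{(\l,\mu)}$ with $C_{m-1}^{(\l+1,\mu)}$. Carrying out this cancellation is the main obstacle, and it is precisely what pins down the coefficient $\frac{n+2\l+\d_n}{2\l+2\mu}$ in \eqref{eq:basis}; additional care is needed near the lines $\sin k\t=0$ and $\cos k\t=0$, where the individual difference quotients in $D_1$ and $D_2$ are not separately regular and one must check that the apparent singularities cancel. (An alternative to writing $\Delta_{h,0}$ out is to verify directly that $D_1$ and $D_2$ send each $Y_{n,\ell}$ to explicit combinations of $h$-harmonic-type polynomials of degree $n-1$ and that $D_1^2+D_2^2$ annihilates the chosen combination, but this runs into the same Jacobi-polynomial identities.)
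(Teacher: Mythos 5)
First, a point of reference: the paper does not prove this proposition at all --- it states it as a known result and cites Dunkl's paper \cite{D92} (``We state an explicit orthogonal basis \dots \cite{D92}''), so there is no in-paper argument to compare yours against. Judged on its own terms, your proposal has the right architecture: reducing the claim to (i) homogeneity of $r^n\wt Y_{n,\ell}(\t)$, (ii) $\Delta_h Y_{n,\ell}=0$, and (iii) nonvanishing plus mutual orthogonality, and then invoking $\dim\CH_n(h^{(2k)}_{\l,\mu})=2$ and the cross-degree orthogonality \eqref{eq:h-ortho}, is a correct and economical framing. Your treatment of (i) via $r^j\cos j\t=\operatorname{Re}(x_1+\i x_2)^j$, the parity of $C_m^{(\l,\mu)}$, and $r^2=x_1^2+x_2^2$ is sound, and the parity-in-$\t$ argument for (iii) is correct (modulo the degenerate case $\l+\mu=0$, where the normalizing factor $\tfrac{n+2\l+\d_n}{2\l+2\mu}$ is undefined anyway, and a slightly more careful check that the two terms of $\wt Y_{n,2}$ cannot cancel identically).

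The genuine gap is (ii), which is the entire substance of the proposition, and which you describe but do not carry out. Saying that one ``writes $\Delta_{h,0}$ out explicitly, applies it to \eqref{eq:basis}, and reduces the eigenvalue equation to the Jacobi differential equation together with the contiguous relations linking $C_m^{(\l,\mu)}$ and $C_{m-1}^{(\l+1,\mu)}$'' is a program, not a proof: the spherical $h$-Laplacian for $I_{2k}$ contains the two families of difference terms coming from the reflections $\s_{2j}$ and $\s_{2j+1}$, and the verification that all of these, summed over $j$, combine with the second-order differential part to give exactly the eigenvalue $-n(n+2k(\l+\mu))$ on each of $\wt Y_{n,1}$ and $\wt Y_{n,2}$ is precisely the nontrivial content that fixes the coefficient $\tfrac{n+2\l+\d_n}{2\l+2\mu}$ (and the constant $\d_n$, which distinguishes even and odd $k$). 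You acknowledge this yourself by calling it ``the main obstacle.'' Until that cancellation is actually exhibited --- including the regularity check along $\sin k\t=0$ and $\cos k\t=0$ that you correctly flag --- the argument does not establish that $Y_{n,1},Y_{n,2}\in\CH_n(h^{(2k)}_{\l,\mu})$, and hence does not prove the proposition. In short: correct strategy, but the decisive computation is missing; the honest alternative, which is what the paper does, is to cite \cite{D92}, where this verification is performed.
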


Let $H_{n,i} = H_{n,i}^{(\l,\mu)}$ denote the norm square of $Y_{n,i}$, defined by
\begin{equation} \label{eq:norm}
  H_{n,i}:= c_{\l,\mu} \int_0^{2 \pi}  \left | \wt Y_{n,i}(\t) \right |^2 \left[ h_{\l,\mu}^{(2 k)}(\cos \t, \sin\t) \right]^2 \dd\t,
\end{equation}
where $c_{\l,\mu}$ is chosen so that $c_{\l,\mu} \int_0^{2 \pi}  [h_{\l,\mu}^{(2k)}(\cos \t, \sin\t)]^2 \dd\t =1$. Its value 
is independent of $k$ and, as can be easily verified,
\begin{equation} \label{eq:norm-cost}
  c_{\l,\mu} =  \frac{\Gamma(\l+\mu+1)}{2 \Gamma(\l+1) \Gamma(\mu+1)}. 
\end{equation}
Let $\xi(\t):= (\cos \t, \sin \t)$. Denote by $P_n(h_{\l,\mu}^{(2 k)};\cdot, \cdot)$ the reproducing kernel of 
$\CH_n(h^{(2k)}_{\l,\mu})$, which is uniquely determined by the property 
$$
 c_{\l,\mu} \int_0^{2\pi} P_n\left (h_{\l,\mu}^{(2 k)};\xi(\t),\xi(\phi) \right) \wt Y(\t)\left[ h_{\l,\mu}^{(2 k)}(\xi(\t)) \right]^2 \dd\t
      = \wt Y(\phi), \quad \forall Y \in \CH_n(h^{(2 k)}_{\l,\mu}).
$$
In terms of an orthogonal basis, the kernel can be written as
$$
P_n\left (h_{\l,\mu}^{(2 k)};x, y \right) =  \frac{Y_{n,1}(x)Y_{n,1}(y)}{H_{n,1}} +
     \frac{Y_{n,2}(x)Y_{n,2}(y)}{H_{n,2}}, 
$$
where $x = (x_1,x_2)$ and $y = (y_1,y_2)$. The kernel satisfies a closed form in terms of the intertwining 
operator, denoted by $V_{\l,\mu}$, and the Gegenbauer polynomial \cite{X97},
\begin{equation} \label{eq:h-zonal}
   P_n\left (h_{\l,\mu}^{(2 k)}; x, y \right) = \frac{n+k(\l + \mu)}{k(\l + \mu)} 
      V_{\l,\mu} \left[ C_n^{(\l +\mu)k} (\la \cdot, y \ra)\right](x)
\end{equation}
for $\|x\| = \|y\| =1$. The Poisson kernel of the $h$-harmonics, denoted by $P(h_{\l,\mu}^{(2 k)}; \cdot,\cdot)$ is defined 
by the property that 
$$
 c_{\l,\mu} \int_0^{2\pi} P\left (h_{\l,\mu}^{(2 k)}; \xi(\t),\xi(\phi) \right) \wt Y(\t) \dd\t = \wt Y(\phi), \quad 
    \forall Y \in \CH_n(h^{(2 k)}_{\l,\mu}),    \quad  \forall n \in \NN_0.
$$
It satisfies a closed formula for $\|x\| = \|y\|=1$ and $0 \le r < 1$,
\begin{align} \label{eq:Poisson}
 P \left (h_{\l,\mu}^{(2 k)}; x, r y \right) &\, = \sum_{n=0}^\infty P_n\left (h_{\l,\mu}^{(2 k)}; x, y\right) r^n \\
       &\, = V_{\l,\mu} \left[\frac{1-r^2}{(1- 2 r \la \cdot,y \ra + r^2)^{(\l+\mu)k+1}}\right](x). \notag
\end{align}

\subsection{The dihedral group $I_{2k+1}$}
For the Dihedral group $I_{2k+1}$, we choose the positive root system as 
$$
   v_j = \big(\sin (\tfrac{j \pi}{2k+1}), - \cos (\tfrac{j \pi}{2k+1}) \big), \qquad j=0,1,\ldots, 2k. 
$$
There is only one conjugacy class and one parameter $\l \ge 0$. The Dunkl operators in this case can 
be derived from those of $I_{2k}$ by setting $\mu = 0$ and replace $k$ by $2k+1$. All other results 
discussed in the previous subsection also hold under the same conversion. 

\subsection{The dihedral group $I_k$ with one parameter}

We are interested in the case of one parameter, that is, $\mu = \l$ when $k$ is even. With one parameter, 
our setting for $I_{2k}$ and $I_{2k+1}$ can be unified. 

For the dihedral group $I_k$ with $k= 2, 3, 4, \ldots$, the Dunkl operators with one parameter $\l$ are 
those given in \eqref{eq:Dj} with the root system given by \eqref{eq:vj} and the reflections given by \eqref{eq:sj}. 
The corresponding weight function is given by 
\begin{equation} \label{eq:h_l}
  h_{\l}^{(k)}(x_1,x_2) := r^{\l} |\sin (k\t) |^{\l}, \quad \l \ge 0, 
\end{equation}
for $k=2, 3, 4, \ldots$. Accordingly, we denote the intertwining operator by $V_\l$. In this setting, the Poisson kernel
\eqref{eq:Poisson} becomes
\begin{equation} \label{eq:Poisson2}
 P\left (h_{\l}^{(k)}; x, ry \right) = \sum_{n=0}^\infty P_n\left (h_{\l}^{(k)}; x, y \right)r^n
        = V_{\l} \left[\frac{1-r^2}{(1- 2 r \la \cdot,y \ra + r^2)^{k \l+1}}\right](x)
\end{equation}
for $\|x\| = \|y\|=1$. For $I_{2k}$, the parameter $2k  \l $ agrees with $k (\l+\mu)$ in \eqref{eq:Poisson}
when $\mu = \l$, whereas for $I_{2k+1}$ the parameter $\mu = 0$ in \eqref{eq:Poisson}. 
 
\section{The intertwining operator}
\setcounter{equation}{0}

We first give a proof of Theorem \ref{thm:main}, which we reformulate it below, using the polar coordinates 
$x_1= r \cos \t$, $x_2 = r \sin \t$. 

\begin{thm} \label{thm:main2}
Let $f$ be a differentiable function on $\RR$. For $0 \le p \le 2k-1$, define 
$$
   F_p (x_1,x_2 ):= f \left(\cos  \left(\frac{ p \pi}{k}\right)x_1 + \sin \left(\frac{ p \pi}{k}\right) x_2 \right). 
$$
Then, for $k =2,3,4,\ldots$, the intertwining operator $V_\l$ for the dihedral group $I_k$ with one parameter $\l$ satisfies 
\begin{align} \label{eq:main2}
 V_\l F_p(x_1,x_2) = a_\l^{(k)}
     \int_{T^{k-1}} f \Bigg (r \sum_{j=0}^{k-1} \cos \left(\t - \frac{p \pi}{k} - \frac{2 j\pi}{k}\right) u_j \Bigg) 
        u_0 \prod_{i=0}^{k-1} u_i^{\l-1}\dd u. 
\end{align}
\end{thm}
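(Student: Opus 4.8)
The plan is to verify the defining relations \eqref{eq:Vk-defn} directly for the integral operator $U_\l$ given by the right–hand side of \eqref{eq:main2}. Write $G_p(u,x)$ for the inner argument $r\sum_{j=0}^{k-1}\cos\bigl(\t-\tfrac{p\pi}{k}-\tfrac{2j\pi}{k}\bigr)u_j$ of \eqref{eq:main2}, let $W$ be the weight appearing there, and set $\omega_p=(\cos\tfrac{p\pi}{k},\sin\tfrac{p\pi}{k})$, so $F_p(x)=f(\langle x,\omega_p\rangle)$. We may assume $\l>0$ (the case $\l=0$ being trivial, $V_0=\mathrm{id}$). Three observations make the identification with $V_\l$ routine once the relations are known: $U_\l 1=1$ is an elementary Dirichlet integral over $T^{k-1}$ that fixes $a_\l^{(k)}$; for $f(t)=t^n$ the function $U_\l F_p$ is again homogeneous of degree $n$ because $G_p$ is linear in $x$; and $\partial_i F_p$ is $\cos\tfrac{p\pi}{k}$, resp. $\sin\tfrac{p\pi}{k}$, times the function of the same form built from $f'$ and the same index $p$. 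Hence a degree–by–degree induction, using the standard injectivity of $(D_1,D_2)$ on $\CP_n^2$ for $n\ge1$, gives $U_\l=V_\l$ on the class in question. Everything therefore reduces to proving $D_i U_\l F_p=U_\l\partial_i F_p$ for $i=1,2$.

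Rather than treat $D_1$ and $D_2$ separately I would test the two relations against the combinations $x_1(\,\cdot\,)+x_2(\,\cdot\,)$ and $x_2(\,\cdot\,)-x_1(\,\cdot\,)$, which together recover both since $x_1^2+x_2^2\neq0$. Work in polar coordinates $x=r(\cos\t,\sin\t)$, in which $x\sigma_m$ is obtained by $\t\mapsto\tfrac{2m\pi}{k}-\t$. The first combination produces the Euler operator $x_1\partial_1+x_2\partial_2=r\partial_r$; differentiating under the integral gives $r\partial_r U_\l F_p=a_\l^{(k)}\int_{T^{k-1}}f'(G_p)\,G_p\,W\,\dd u$, and writing $G_p=\langle x,\omega_p\rangle+\sum_{i=1}^{k-1}u_i\,\partial_{u_i}G_p$ (valid because $\sum_j u_j=1$) identifies $f'(G_p)\sum_i u_i\partial_{u_i}G_p$ with $\sum_i u_i\,\partial_{u_i}\!\bigl[f(G_p)\bigr]$. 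Integrating by parts on $T^{k-1}$ — the boundary terms vanish for $\l>0$, since $u_iW$ is $O(u_i^{\l})$ near $\{u_i=0\}$ and $O(u_0^{\l})$ near $\{u_0=0\}$ — and computing $\sum_{i=1}^{k-1}\partial_{u_i}(u_iW)=\l\,(ku_0-1)\prod_{i=0}^{k-1}u_i^{\l-1}$, the first combination of the two desired relations collapses to the single clean identity
\[
   a_\l^{(k)}\int_{T^{k-1}} f(G_p)\,\prod_{i=0}^{k-1}u_i^{\l-1}\,\dd u \;=\; \sum_{m=0}^{k-1} U_\l F_p(x\sigma_m).
\]
This in turn holds because $\prod_i u_i^{\l-1}=\bigl(\sum_i u_i\bigr)\prod_i u_i^{\l-1}=\sum_i u_iW_i$, where $W_i$ is the weight in which $u_i$ carries the exponent $\l$ (so $W_0=W$), while $\int_{T^{k-1}}f(G_p)W_i\,\dd u$ depends on $i$ only through which variable carries the heavier weight, and the reflections $\sigma_m$ permute these $k$ contributions bijectively; this is precisely the combinatorial content packaged by \lemref{lem:integral} of \cite{X15}.

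The orthogonal combination $x_2\bigl(D_1U_\l F_p-U_\l\partial_1F_p\bigr)-x_1\bigl(D_2U_\l F_p-U_\l\partial_2F_p\bigr)$ is the real work. The crucial algebraic input is the identity $\cos A-\cos B=-2\sin\tfrac{A+B}{2}\sin\tfrac{A-B}{2}$: it shows that $G_p(u,x)-G_p(u,x\sigma_m)$ equals $\langle x,v_m\rangle$ times the explicit $x$–independent linear form $2\sum_{j=0}^{k-1}\sin\bigl(\tfrac{m\pi}{k}-\tfrac{p\pi}{k}-\tfrac{2j\pi}{k}\bigr)u_j$, so each Dunkl difference quotient becomes an ordinary integral over $T^{k-1}$ with no singularity, and the factor $\cot(\tfrac{m\pi}{k}-\t)$ that arises in this combination combines with $\langle x,v_m\rangle$ into the smooth factor $x_1\cos\tfrac{m\pi}{k}+x_2\sin\tfrac{m\pi}{k}$. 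One then faces the angular operator $x_2\partial_1-x_1\partial_2=-\partial_\t$, and I would close the identity by a second integration by parts on $T^{k-1}$, again converting $f'$–integrals into $f$–integrals and reassembling the facet contributions into the reflected values $U_\l F_p(x\sigma_m)$. The hard part will be the bookkeeping: the weight $W$ distinguishes $u_0$ and so breaks the cyclic symmetry $j\mapsto j+1$, and one must track exactly which face of $T^{k-1}$ feeds which reflection $\sigma_m$; summing over all $m$ — equivalently, replacing $W$ by the symmetric weight $\prod_i u_i^{\l-1}=\sum_i u_iW_i$ as above — restores the symmetry, and that is what makes the signs and indices balance. Once both combinations vanish, $D_iU_\l F_p=U_\l\partial_iF_p$ follows for $i=1,2$, completing the verification.
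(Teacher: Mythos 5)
Your overall strategy coincides with the paper's: verify $D_iU_\l F_p=U_\l\partial_iF_p$ directly for the explicit integral operator. Your treatment of the radial combination is correct and in fact a clean repackaging of part of the paper's computation: the Euler identity $\sum_{i\ge1}u_i\partial_{u_i}G_p=G_p-\la x,\omega_p\ra$, the divergence $\sum_{i=1}^{k-1}\partial_{u_i}(u_iW)=\l(ku_0-1)\prod_iu_i^{\l-1}$, and the resulting reduction to $a_\l^{(k)}\int f(G_p)\prod_iu_i^{\l-1}\,\dd u=\sum_mU_\l F_p(x\s_m)$ all check out. But two things are missing, and the second is a genuine gap. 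First, the identity $U_\l F_p(x\s_m)=a_\l^{(k)}\int f(G_p(u,x))\,u_{(m-p)\bmod k}\prod_iu_i^{\l-1}\,\dd u$, which you need even for the radial step, is asserted rather than proved; it is \emph{not} the content of Lemma \ref{lem:integral} (that lemma is a Feynman-type parametrization of a product of factors $(1-2rx_i+r^2)^{-\l_i}$ and plays no role here). It requires the explicit index-shift change of variables $u_j\mapsto u_{(m-p-j)\bmod k}$ carried out in the paper's Cases 1 and 2; this is fillable but must be done.

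The real gap is the angular combination, which you yourself flag as ``the real work'' and leave as a plan. The mechanism you propose --- factor $G_p(u,x)-G_p(u,x\s_m)=\la x,v_m\ra L_m(u)$ via $\cos A-\cos B$ and then do ``a second integration by parts'' --- does not go through as described: writing the difference quotient of $f$-values through that factorization gives $L_m(u)\int_0^1f'\bigl(G_p(u,x\s_m)+t\la x,v_m\ra L_m(u)\bigr)\dd t$, i.e.\ $f'$ evaluated at interpolated arguments under an extra $t$-integral, and no integration by parts on $T^{k-1}$ converts this into the single integral $\int f'(G_p(u,x))(\cdots)W\,\dd u$ that must match $U_\l\partial_iF_p$. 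The order of operations in the paper is essential and different: one first applies the permutation change of variables so that $U_\l F_p(x)-U_\l F_p(x\s_m)=a_\l^{(k)}\int f(G_p(u,x))(u_0-u_{(m-p)\bmod k})\prod_iu_i^{\l-1}\dd u$ --- both terms now involve $f$ at the \emph{same} argument --- and only then integrates by parts in the single variable $u_{(m-p)\bmod k}$, producing one $f'(G_p(u,x))$-integral whose coefficient $\partial_{u_{(m-p)\bmod k}}G_p$ is what gets factored by the $\cos A-\cos B$ identity to extract $\la x,v_m\ra$. After that, the proof still requires the trigonometric summation identities (the paper's $S(u)$ and $C(u)$ computations) to combine the $k$ reflection terms with $\partial_\t U_\l F_p$; your sketch does not address these at all. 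So the proposal establishes roughly half of the theorem and leaves the harder half at the level of an unexecuted and, as stated, non-working plan.
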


\begin{proof}
We only need to consider $0 \le p \le k-1$, since we can replace $f(t)$ by $f(-t)$ if $k \le p \le 2k-1$ by
$\sin (\pi +\t) = - \sin \t$ and $\cos (\pi +\t) = - \cos \t$. Let $p$ be fixed, $0 \le p \le k-1$. For simplicity, 
we shall write
$$
  \Psi(\t,u):= \sum_{j=0}^{k-1} \cos \left(\t - \frac{p \pi}{k} - \frac{2 j\pi}{k}\right) u_j.
$$
Our goal is to verify that 
\begin{equation}\label{eq:intertw}
  D_1 V_\l F_p = V_\l \partial_1 F_p \quad \hbox{and}\quad  D_2 V_\l F_p = V_\l \partial_2 F_p,
\end{equation}
where $D_1$ and $D_2$ are the Dunkl operators in \eqref{eq:Dj}. First we consider the difference 
part for each reflection. 

Fix $\ell$, $0\le \ell \le k-1$. In polar coordinates, \eqref{eq:sj} becomes  
$$
  x \s_j = r \left( \cos (\tfrac{2j \pi}{k}-\t),  \sin (\tfrac{2j \pi}{k}-\t) \right), \qquad 0 \le j \le k-1,
$$ 
so that the reflection in $v_\ell$ is simply a shift in $\t$ variable. In particular,
\begin{align*}
 V_\l F_p (x \s_\ell) = &\, a_\l^{(k)}
     \int_{T^{k-1}} f \Bigg (r \sum_{j=0}^{k-1} \cos \left(\frac{2 \ell \pi}{k}- \t - \frac{p \pi}{k} - \frac{2 j\pi}{k}\right) u_j \Bigg) 
        u_0 \prod_{i=0}^{k-1} u_i^{\l-1}\dd u \\
     = &\, a_\l^{(k)} 
     \int_{T^{k-1}} f \Bigg (r \sum_{j=0}^{k-1} \cos \left(\t - \frac{2 (\ell -j) \pi}{k}+ \frac{p \pi}{k}\right) u_j \Bigg) 
        u_0 \prod_{i=0}^{k-1} u_i^{\l-1}\dd u.
\end{align*}
We need to consider two cases. 

\medskip\noindent
{\it Case 1.} $\ell \ge p$. Here we write the sum inside $f$ as 
\begin{align*}
 & \sum_{j=0}^{k-1} \cos \left(\t - \frac{2 (\ell -j) \pi}{k}+ \frac{p \pi}{k}\right) u_j  
     = \sum_{j=0}^{k-1} \cos \left(\t -  \frac{p \pi}{k} + \frac{2 (\ell - p - j) \pi}{k}\right) u_j \\
 &  \quad = \sum_{j=0}^{\ell -p} \cos \left(\t - \frac{p \pi}{k} - \frac{2 j\pi}{k}\right) u_{\ell-p-j} 
     +  \sum_{j=\ell-p+1}^{k-1} \cos \left(\t -  \frac{p \pi}{k} - \frac{2 j\pi}{k}\right) u_{k+\ell-p-j}.
\end{align*}
Making a change of variables $u_j = u_{\ell -p-j}$ for $0 \le j \le \ell-p$ and $u_j = u_{k+\ell-p-j}$ for $\ell-p+1\le j \le k-1$,
we see that 
$$
 V_\l F_p (x \s_\ell) =  a_\l^{(k)} 
     \int_{T^{k-1}} f \big(r \Psi(\t,u) \big) u_{\ell-p} \prod_{i=0}^{k-1} u_i^{\l-1}du.
$$
Consequently, we conclude that
\begin{align*}
V_\l F_p(x)-V_\l F_p (x \s_\ell)  =   a_\l^{(k)} 
     \int_{T^{k-1}} f \big(r \Psi(\t,u) \big) (u_0 - u_{\ell-p}) \prod_{i=0}^{k-1} u_i^{\l-1}du.
\end{align*}
Evidently, $V_\l F_p(x) - V_\l F_p (x \s_p) = 0$. For $\ell > p$, we use the relation 
$$
  \frac{d}{d u_{\ell-p}}\left[ u_0 u_{\ell-p} \prod_{i=0}^{k-1} u_i^{\l-1}\right] = \l (u_0 - u_{\ell-p}) \prod_{i=0}^{k-1} u_i^{\l-1}, 
$$
which follows from $u_0=1-u_1-\cdots - u_{k-1}$, so that an integrating by parts gives
\begin{align*}
& \l \left[V_\l F_p(x)-V_\l F_p (x \s_\ell)\right]  =   -  a_\l^{(k)} 
     \int_{T^{k-1}} f'\big(r \Psi(\t,u) \big)  \\
 & \qquad \times r \left(\cos \Big(\t - \frac{(2 \ell-p)\pi}{k}\Big) - \cos  \left(\t -\frac{p \pi}{k}\right) \right) 
       u_0 u_{\ell-p} \prod_{i=0}^{k-1} u_i^{\l-1}du.
\end{align*}
By \eqref{eq:vj}, we have $\la x, v_\ell \ra = r \sin (\frac{\ell \pi}{k}- \t)$ in polar coordinates. Hence, using 
$$
\cos \Big(\t - \frac{(2 \ell-p)\pi}{k}\Big) - \cos  \left(\t -\frac{p \pi}{k}\right) 
      = 2 \sin \Big(\t - \frac{\ell \pi}{k}\Big) \sin \Big( \frac{(\ell -p)\pi}{k} \Big),
$$
we conclude that 
\begin{align*}
  \l \frac{ V_\l F_p(x)-V_\l F_p (x \s_\ell)}{\la x,v_\ell \ra}  =  2 a_\l^{(k)} 
     \int_{T^{k-1}}  f'\big(r \Psi(\t,u) \big) \sin \Big( \frac{(\ell -p)\pi}{k} \Big) u_0 u_{\ell -p} \prod_{i=0}^{k-1} u_i^{\l-1}\dd u.
\end{align*}
 
\medskip\noindent
{\it Case 2.} $\ell < p$. Here we write the sum inside $f$ as 
\begin{align*}
 & \sum_{j=0}^{k-1} \cos \left(\t - \frac{2 (\ell -j) \pi}{k}+ \frac{p \pi}{k}\right) u_j  
   = \sum_{j=0}^{k-1} \cos \left(\t -  \frac{p \pi}{k} + \frac{2 (\ell - p - j + k) \pi}{k}\right) u_j \\
 & = \sum_{j=0}^{k-p+\ell} \cos \left(\t - \frac{p \pi}{k} - \frac{2 j \pi}{k}\right) u_{k-p+\ell-j}  
    + \sum_{j=k-p+\ell+1}^{k-1} \cos \left(\t -  \frac{p \pi}{k} - \frac{2 j\pi}{k}\right) u_{2k-p+\ell-j}. 
\end{align*}
Making a change of variables $u_j = u_{k- p+\ell-j}$ for $0 \le j \le k-p+\ell$ and $u_j = 2k-p+\ell-j$ for $k-p+\ell+1\le j \le k-1$, we see that 
$$
 V_\l F_p (x \s_\ell) =  a_\l^{(k)} 
     \int_{T^{k-1}} f \big(r \Psi(\t,u) \big) u_{k-p+\ell} \prod_{i=0}^{k-1} u_i^{\l-1}\dd u.
$$
We can now follow the procedure in Case 1 to conclude that
\begin{align*}
  \l \frac{ V_\l F_p(x)-V_\l F_p (x \s_\ell)}{\la x,v_\ell \ra}  = &\, 2 a_\l^{(k)} 
     \int_{T^{k-1}} f'\big(r \Psi(\t,u) \big)
 \sin \Big( \frac{(\ell -p)\pi}{k} \Big) u_0 u_{k-p+\ell} \prod_{i=0}^{k-1} u_i^{\l-1}\dd u.
\end{align*}
\medskip

We are now ready to verify \eqref{eq:intertw}. We consider the intertwining identity for $D_1$ first. Putting the 
two cases together, we obtain 
$$
 \l \sum_{\ell=0}^{k-1}\frac{ V_\l F_p(x)-V_\l F_p (x \s_\ell)}{\la x,v_\ell \ra} \sin \left(\frac{\ell \pi}{k} \right)   
 = \, a_\l^{(k)} \int_{T^{k-1}} f'\big(r \Psi(\t,u) \big) S(u) u_0   \prod_{i=0}^{k-1} u_i^{\l-1}\dd u.
$$
where 
\begin{align*}
S(u) =  2 \sum_{\ell=0}^{p-1} \sin \left(\frac{\ell \pi}{k} \right) \sin \Big( \frac{(\ell -p)\pi}{k} \Big) u_{k-p+\ell}
    +  2 \sum_{\ell=p+1}^{k-1} \sin \left(\frac{\ell \pi}{k} \right) \sin \Big( \frac{(\ell-p)\pi}{k} \Big) u_{\ell-p}. 
\end{align*}
For $0 \le \ell \le p-1$, we use the identity 
\begin{align*}
& 2 \sin \left(\frac{\ell \pi}{k} \right) \sin \Big( \frac{(\ell -p)\pi}{k} \Big) 
  =  2 \sin \Big(\frac{(k-p+\ell) \pi}{k} + \frac{p \pi}{k} \Big) \sin \Big( \frac{(k-p+\ell)\pi}{k} \Big)  \\
& = \left(1- \cos \Big(\frac{2(k-p+\ell) \pi}{k} \Big) \right) \cos \Big(\frac{p \pi}{k} \Big) 
    +   \sin \Big( \frac{2(k-p+\ell)\pi}{k} \Big) \sin \Big(\frac{p \pi}{k} \Big), 
\end{align*}
where the index $k-p+\ell$ matches that of $u_{k-p+j}$ in the first sum of $S(u)$, whereas for $p+1\le \ell \le k-1$,
we use the identity 
\begin{align*}
& 2 \sin \left(\frac{\ell \pi}{k} \right) \sin \Big( \frac{(\ell -p)\pi}{k} \Big) 
  =  2 \sin \Big(\frac{(\ell-p) \pi}{k} + \frac{p \pi}{k} \Big) \sin \Big( \frac{(\ell-p)\pi}{k} \Big)  \\
& = \left(1- \cos \Big(\frac{2(\ell-p) \pi}{k} \Big) \right) \cos \Big(\frac{p \pi}{k} \Big) 
    +   \sin \Big( \frac{2(\ell-p)\pi}{k} \Big) \sin \Big(\frac{p \pi}{k} \Big), 
\end{align*}
where the index $\ell-p$ agrees with that of $u_{k-p}$ in the second sum of $S(u)$. Putting together we 
conclude that 
\begin{align*}
 S(u) =& \,  \cos \Big(\frac{p \pi}{k} \Big) \sum_{\ell=0}^{k-1}  \left(1- \cos \Big(\frac{2 \ell \pi}{k} \Big) \right) u_j
    + \sin \Big(\frac{p \pi}{k} \Big) \sum_{\ell=0}^{k-1}\sin \Big( \frac{2 \ell \pi}{k} \Big) \\
      = & \,   \cos \Big(\frac{p \pi}{k} \Big) -  \sum_{\ell=0}^{k-1} \cos \Big(\frac{p \pi}{k} + \frac{2 \ell \pi}{k} \Big),
\end{align*}
since $ \sum_{\ell=0}^{k-1} u_\ell =1$. Furthermore, taking derivative in \eqref{eq:main2}, we obtain 
\begin{align*}
 \frac{ \partial}{\partial x_1} V_\l F_p(x_1,x_2) =  a_\l^{(k)} \int_{T^{k-1}} f' \big(r  \Psi(\t,u)) 
      \sum_{j=0}^{k-1}  \cos \left(\frac{p \pi}{k} + \frac{2 j\pi}{k}\right) u_j  \,   u_0 \prod_{i=0}^{k-1} u_i^{\l-1}\dd u. 
\end{align*}
Hence, by the definition of $D_1$ in \eqref{eq:Dj}, we conclude that 
\begin{align*}
  D_1 V_\l F_p(x) &\, = \cos \Big(\frac{p \pi}{k} \Big)  \int_{T^{k-1}} f' \big(r  \Psi(\t,u)) u_0 \prod_{i=0}^{k-1} u_i^{\l-1}\dd u\\
    &\, = \int_{T^{k-1}} \frac{\partial}{\partial x_1} F_p \big(r  \Psi(\t,u)) u_0 \prod_{i=0}^{k-1} u_i^{\l-1}\dd u
     = V_\l \partial_1 F_p (x)
\end{align*}
since $\partial_1 F_p(x_1,x_2) = \cos (\frac{p \pi}{k}) f'(\cos (\frac{p \pi}{k})x_1 + \sin (\frac{p \pi}{k})x_2)$. This
verifies the first identity in \eqref{eq:intertw}. 

The second identity in \eqref{eq:intertw} is verified similarly. From the two cases that we consider for individual 
difference operator, we obtain
$$
 \l \sum_{\ell=0}^{k-1}\frac{ V_\l F_p(x)-V_\l F_p (x \s_\ell)}{\la x,v_\ell \ra} \cos \left(\frac{\ell \pi}{k} \right)   
 = \, a_\l^{(k)} \int_{T^{k-1}} f'\big(r \Psi(\t,u) \big) C(u) u_0   \prod_{i=0}^{k-1} u_i^{\l-1}\dd u.
$$
where 
\begin{align*}
C(u) =  2 \sum_{\ell=0}^{p-1} \cos \left(\frac{\ell \pi}{k} \right) \sin \Big( \frac{(\ell -p)\pi}{k} \Big) u_{k-p+\ell}
    +  2 \sum_{\ell=p+1}^{k-1} \cos \left(\frac{\ell \pi}{k} \right) \sin \Big( \frac{(\ell-p)\pi}{k} \Big) u_{\ell-p}. 
\end{align*}
For $0 \le \ell \le p-1$, we use the identity 
\begin{align*}
& 2 \cos \left(\frac{\ell \pi}{k} \right) \sin \Big( \frac{(\ell -p)\pi}{k} \Big)  \\
& = \cos \Big(\frac{p \pi}{k} \Big)  \sin \Big( \frac{2(k-p+\ell)\pi}{k} \Big) 
       - \sin \Big(\frac{p \pi}{k} \Big)\left(1- \cos \Big(\frac{2(k-p+\ell) \pi}{k} \Big) \right), 
\end{align*}
whereas for $p+1\le \ell \le k-1$, we use the identity 
\begin{align*}
& 2 \cos \left(\frac{\ell \pi}{k} \right) \sin \Big( \frac{(\ell -p)\pi}{k} \Big) \\
& = \cos \Big(\frac{p \pi}{k} \Big)  \sin \Big( \frac{2(\ell-p)\pi}{k} \Big) 
       - \sin \Big(\frac{p \pi}{k} \Big)\left(1- \cos \Big(\frac{2(\ell-p) \pi}{k} \Big) \right);
\end{align*}
together they lead to 
\begin{align*}
 C(u) = -\sin \Big(\frac{p \pi}{k} \Big) +  \sum_{\ell=0}^{k-1} \sin \Big(\frac{p \pi}{k} + \frac{2 \ell \pi}{k} \Big).
\end{align*}
Furthermore, taking derivative in \eqref{eq:main2}, we obtain 
\begin{align*}
 \frac{ \partial}{\partial x_2} V_\l F_p(x_1,x_2) =  a_\l^{(k)} \int_{T^{k-1}} f' \big(r  \Psi(\t,u)) 
      \sum_{j=0}^{k-1}  \sin \left(\frac{p \pi}{k} + \frac{2 j\pi}{k}\right) u_j  \,   u_0 \prod_{i=0}^{k-1} u_i^{\l-1}\dd u. 
\end{align*}
Hence, by the definition of $D_2$ in \eqref{eq:Dj}, we conclude that 
\begin{align*}
  D_2 V_\l F_p(x) &\, = \cos \Big(\frac{p \pi}{k} \Big)  \int_{T^{k-1}} f' \big(r  \Psi(\t,u)) u_0 \prod_{i=0}^{k-1} u_i^{\l-1}\dd u\\
    &\, = \int_{T^{k-1}} \frac{d}{dx_2} F_p \big(r  \Psi(\t,u)) u_0 \prod_{i=0}^{k-1} u_i^{\l-1}\dd u
     = V_\l \partial_2 F_p (x)
\end{align*}
since $\partial_2 F_p(x_1,x_2) = \sin (\frac{p \pi}{k}) f'(\cos (\frac{p \pi}{k})x_1 + \sin (\frac{p \pi}{k})x_2)$. This
verifies the second identity in \eqref{eq:intertw}. The proof is completed. 
\end{proof}

Let us mention some consequences of our main theorem. 

\begin{cor}
Let $f$ be a differentiable function on $\RR$. Then, for $k=2,3,4,\ldots$, 
\begin{align} \label{eq:Vf1}
    V f(\{\cdot\}_1) (x_1,x_2) & \, = a_\l^{(k)} \int_{T^{k-1}} f \big(c(u) x_1+s(u) x_2\big) u_0^\l \prod_{i=1}^{k-1} u_i^{\l-1}du \\
   &\, = a_\l^{(k)} \int_{T^{k-1}} f \Bigg (r \sum_{j=0}^{k-1} \cos \left(\t - \frac{2 j\pi}{k}\right) u_j \Bigg)
        u_0^\l \prod_{i=1}^{k-1} u_i^{\l-1}\dd u. \notag 
\end{align}
Furthermore, if $k$ is even, then
\begin{align} \label{eq:Vf2}
  V f(\{\cdot\}_2) (x_1,x_2) &\, = a_\l^{(k)} \int_{T^{k-1}} f \big(c(u) x_2 - s(u) x_1\big) u_0^\l \prod_{i=1}^{k-1} u_i^{\l-1}du \\
    &\, = a_\l^{(k)} \int_{T^{k-1}} f \Bigg (r \sum_{j=0}^{k-1} \sin \left(\t - \frac{2 j\pi}{k}\right) u_j \Bigg)  
        u_0^\l \prod_{i=1}^{k-1} u_i^{\l-1}\dd u. \notag 
\end{align}
\end{cor}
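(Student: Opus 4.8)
The plan is to derive both identities by specializing Theorem~\ref{thm:main} (equivalently Theorem~\ref{thm:main2}) to the index values $p=0$ and $p=k/2$, at which the functions $F_p$ reduce to the pure coordinate functions $f(\{\cdot\}_1)$ and $f(\{\cdot\}_2)$, respectively. Everything then comes down to reading off the already-established formula and simplifying the weight.

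For \eqref{eq:Vf1} I would set $p=0$. Then $\cos(0)=1$ and $\sin(0)=0$, so $F_0(x_1,x_2)=f(x_1)=f(\{\cdot\}_1)$, and the argument of $f$ inside the integral in \eqref{eq:main} collapses to $c(u)x_1+s(u)x_2$, while the weight $u_0^\l\prod_{i=1}^{k-1}u_i^{\l-1}$ is already in the stated form; this is the first line of \eqref{eq:Vf1}. The second line is the same statement read off from \eqref{eq:main2} instead, after rewriting the weight there as $u_0\prod_{i=0}^{k-1}u_i^{\l-1}=u_0^\l\prod_{i=1}^{k-1}u_i^{\l-1}$ (split off the $i=0$ factor); in polar coordinates $x_1=r\cos\t$, $x_2=r\sin\t$ the two integrands agree because $c(u)x_1+s(u)x_2=r\sum_{j=0}^{k-1}\cos(\t-\tfrac{2j\pi}{k})u_j$ by the cosine addition formula, which is exactly the consistency already underlying the equivalence of Theorems~\ref{thm:main} and \ref{thm:main2}.

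For \eqref{eq:Vf2} I would take $k$ even and set $p=k/2$, a legitimate integer index with $0\le p\le 2k-1$. Then $\cos(\tfrac{p\pi}{k})=\cos\tfrac\pi2=0$ and $\sin(\tfrac{p\pi}{k})=\sin\tfrac\pi2=1$, so $F_{k/2}(x_1,x_2)=f(x_2)=f(\{\cdot\}_2)$, and the argument of $f$ in \eqref{eq:main} becomes $0\cdot(c(u)x_1+s(u)x_2)+1\cdot(c(u)x_2-s(u)x_1)=c(u)x_2-s(u)x_1$, which gives the first line of \eqref{eq:Vf2}; the polar form follows from \eqref{eq:main2} with $p=k/2$ upon using $\cos(\t-\tfrac\pi2-\tfrac{2j\pi}{k})=\sin(\t-\tfrac{2j\pi}{k})$.

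There is no real obstacle here: the argument is pure specialization and re-indexing of Theorem~\ref{thm:main}, together with the trivial weight identity $u_0\prod_{i=0}^{k-1}u_i^{\l-1}=u_0^\l\prod_{i=1}^{k-1}u_i^{\l-1}$. The only point worth flagging is the parity restriction — the family $\{F_p\}_{0\le p\le 2k-1}$ contains the pure coordinate function $(x_1,x_2)\mapsto f(x_2)$ only when $k/2\in\ZZ$, which is why \eqref{eq:Vf2} is stated for even $k$ alone; for odd $k$ Theorem~\ref{thm:main} simply does not address that function.
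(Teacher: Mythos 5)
Your proof is correct and is essentially the paper's own argument: the paper likewise obtains \eqref{eq:Vf1} by setting $p=0$ in \eqref{eq:main} and \eqref{eq:Vf2} by setting $p=k/2$ (the paper's ``$p=m/2$'' is a typo for $k/2$), noting that the latter is an integer only for even $k$. Your added checks of the weight identity $u_0\prod_{i=0}^{k-1}u_i^{\l-1}=u_0^\l\prod_{i=1}^{k-1}u_i^{\l-1}$ and of $\cos(\t-\tfrac{\pi}{2}-\tfrac{2j\pi}{k})=\sin(\t-\tfrac{2j\pi}{k})$ are accurate details the paper leaves implicit.
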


\begin{proof}
The first identity is \eqref{eq:main} with $p =0$. The second identity is \eqref{eq:main} with $p = m/2$, which
is an integer only if $m$ is even. 
\end{proof}

The result and the proof may seem to suggest that we only need to prove the theorem for $V_\l f(\{\cdot \}_1)$. 
However, a moment reflection shows that this is not the case since 
$V_\l [f (a \{\cdot\}_1+b \{\cdot\}_2)](x) \ne  V_\l  \left[ f (\{\cdot \}_1)\right](a x_1+b x_2)$ in general. 

The statement of the theorem and its proof may suggest a possible formula for $V_\l f(\{\cdot\}_1, \{\cdot\}_2)$. 
However, the obvious choice does not work out and a full formula is still elusive at the time of this writing.

\section{Poisson kernel of $h$-harmonics and orthogonal polynomials}
\setcounter{equation}{0}

The closed formula of $V_\l$ in the previous section has implications on $h$-harmonics and their associated
orthogonal polynomials. We start with a short subsection on the connection of $h$-harmonics and orthogonal 
polynomials. 

\subsection{$h$-harmonics and orthogonal polynomials}

Let $w$ be a nonnegative weight function on $[-1,1]$. Let $p_n(w)$ denotes the orthogonal polynomial of 
degree $n$ and 
$$
   c_\l \int_{-1}^1 p_n(w;x) p_m(w;x) w(x) \dd x = h_n(w) \delta_{n,m}, 
$$
where $c_\l$ is the normalization constant of $w_\l$. The Poisson kernel associated with $w$ is defined by
$$
  \phi_r (w; x,y) = \sum_{n=0}^\infty \frac{p_n(w; x) p_n(w_;y)}{h_n(w)} r^n, \qquad 0 \le r <1. 
$$

For the weight function $w_\l(x) = (1-x^2)^{\l-\f12}$, $\l > -\f12$, the corresponding orthogonal polynomials 
are the Gegenbauer polynomials $C_n^\l$, whcih are known to satisfy two generating functions,
\begin{equation}\label{eq:Gegen1} 
   \frac{1}{(1-2r x  +r^2)^\l} = \sum_{n=0}^\infty C_n^\l (x) r^n, \qquad 0 \le r < 1,
\end{equation}
and, for $\l > 0$, 
\begin{equation}\label{eq:Gegen2} 
   \frac{1-r^2}{(1-2r x  +r^2)^{\l+1}} = \sum_{n=0}^\infty \frac{n+\l}{\l} C_n^\l (x) r^n, \qquad 0 \le r < 1.
\end{equation}
The righthand side of the second one, \eqref{eq:Gegen2}, is in fact the Poisson kernel $\phi_r(w_\l; x,1)$. 

For the $h$-spherical harmonics associated to $h_\l$ defined in \eqref{eq:h_l}, an orthogonal basis 
$\{Y_{n,1}^{\l,k}, Y_{n,2}^{\l,k}\}$ of $\CH_n(h_{\l})$ can be given in terms of the orthogonal polynomials on $[-1,1]$ 
with respect to the weight functions $w_{\l-\f12}^{(k)}$ and $w_{\l+\f12}^{(k)}$, respectively, where 
\begin{equation} \label{eq:weightR}
  w_{\l \mp \f12}^{(k)} (t) =  | U_{k-1}(t)|^{2\l} (1-t^2)^{\l \mp \f12}, \qquad -1 < t < 1.
\end{equation}
The normalization constant $b_{\l\mp \f12} = 1/ \int_{-1}^1 w_{\l \mp \f12}^{(k)} (t)dt$ is easily seen to be
$$
  b_{\l  - \f12} = 2 c_\l = \frac{\Gamma(\l+\f12)^2}{\Gamma(2\l+1)} \quad \hbox{and}\quad 
  b_{\l + \f12} = 2 b_{\l - \f12} = \frac{2 \Gamma(\l+\f12)^2}{\Gamma(2\l+1)}. 
$$

\begin{prop} \label{prop:Yn-pn}
For $k=2,3,4,\ldots$ and $\l \ge 0$, 
\begin{align} \label{eq:Yn-pn}
\begin{split}
   Y_{n,1}^{\l,k} (x_1,x_2) &\, =  r^n p_n\left(w_{\l-\f12}^{(k)}; x_1\right), \qquad n =0,1,2,\ldots,\\
   Y_{n,2}^{\l,k} (x_1,x_2)&\, =  r^n x_2 p_{n-1} \left(w_{\l+\f12}^{(k)};  x_1\right), \qquad n =1,2,\ldots.
\end{split}
\end{align}
Furthermore, the norm $H_{n,i}^{\l,k}$ of $Y_{n,i}^{\l,k}$ satisfies
\begin{align} \label{eq:Yn-pn-norm}
  H_{n,1}^{\l,k} = h_n\left(w_{\l-\f12}^{(k)}\right) \quad \hbox{and} \quad  
  H_{n,2}^{\l,k} = \frac12 h_n\left(w_{\l+\f12}^{(k)}\right). 
\end{align}
\end{prop}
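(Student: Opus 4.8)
The plan is to push everything onto the unit circle. Writing a homogeneous $h$-harmonic of degree $n$ in polar coordinates as $Y(x_1,x_2)=r^n\widetilde Y(\theta)$, the orthogonality relation \eqref{eq:h-ortho} says precisely that the angular parts are mutually orthogonal in $L^2\big([0,2\pi],|\sin k\theta|^{2\l}\,\dd\theta\big)$, since $[h_\l^{(k)}(\cos\theta,\sin\theta)]^2=|\sin k\theta|^{2\l}$. Now observe that $I_k$ contains the reflection $\sigma_0\colon(x_1,x_2)\mapsto(x_1,-x_2)$ (take $j=0$ in \eqref{eq:sj}), that is $\theta\mapsto-\theta$, and that $\Delta_h=D_1^2+D_2^2$ is invariant under $I_k$; hence $\CH_n(h_\l)$ splits as the direct sum of the polynomials even in $x_2$ and those odd in $x_2$. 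By Proposition~\ref{prop:basis} (applied in the form appropriate to $\mu=\l$ for even $k$ and to the one-class group for odd $k$, cf.\ \S2.3), the basis element $Y_{n,1}$ is even in $x_2$ and $Y_{n,2}$ is odd in $x_2$ — both lines of \eqref{eq:basis} manifestly have the stated parity in $\theta$ — and since $\dim\CH_n(h_\l)=2$ for $n\ge1$, each of the two parity subspaces is one-dimensional. Thus $Y_{n,1}$ is, up to a scalar, the unique even $h$-harmonic of degree $n$, and $Y_{n,2}$ the unique odd one (for $n=0$, $\CH_0$ is just the constants).

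For the even part, $\widetilde Y_{n,1}(\theta)$ is a polynomial $q_n$ in $\cos\theta$, because homogeneous polynomials of degree $n$ that are even in $x_2$ restrict on the circle to polynomials of degree $\le n$ in $\cos\theta$; moreover $\deg q_n=n$, as follows either from a leading-coefficient computation in \eqref{eq:basis} or, more cheaply, from completeness of the family $\{\widetilde Y_{n,1}\}\cup\{\widetilde Y_{n,2}\}$ in $L^2$, which forces exactly one polynomial of each degree among the $q_n$. Substituting $t=\cos\theta$ and using the Chebyshev identity $\sin k\theta=\sin\theta\,U_{k-1}(\cos\theta)$, the measure $|\sin k\theta|^{2\l}\,\dd\theta$ (after folding $[0,2\pi]$ onto $[-1,1]$, which costs only an overall factor $2$) pushes forward to $|U_{k-1}(t)|^{2\l}(1-t^2)^{\l}(1-t^2)^{-1/2}\,\dd t=w_{\l-\f12}^{(k)}(t)\,\dd t$. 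Hence $\{q_n\}$ is a sequence of polynomials with $\deg q_n=n$ orthogonal with respect to $w_{\l-\f12}^{(k)}$, so $q_n$ is a scalar multiple of $p_n(w_{\l-\f12}^{(k)})$; fixing the normalization of $Y_{n,1}^{\l,k}$ accordingly yields the first line of \eqref{eq:Yn-pn}. For the odd part, $\widetilde Y_{n,2}(\theta)=\sin\theta\,r_{n-1}(\cos\theta)$ with $\deg r_{n-1}=n-1$; pairing two such angular functions produces the extra factor $\sin^2\theta=1-t^2$ under $t=\cos\theta$, which promotes the weight from $w_{\l-\f12}^{(k)}$ to $w_{\l+\f12}^{(k)}$, and the same uniqueness argument gives $r_{n-1}$ a scalar multiple of $p_{n-1}(w_{\l+\f12}^{(k)})$, which is the second line of \eqref{eq:Yn-pn}.

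The norm formulas \eqref{eq:Yn-pn-norm} then follow by inserting \eqref{eq:Yn-pn} into \eqref{eq:norm}, folding $[0,2\pi]$ onto $[-1,1]$, and substituting $t=\cos\theta$ exactly as above, then comparing with the definition of $h_n(w)$; the relation $b_{\l+\f12}=2b_{\l-\f12}$ between the normalization constants of $w_{\l+\f12}^{(k)}$ and $w_{\l-\f12}^{(k)}$ is what makes the factor $\tfrac12$ appear in $H_{n,2}^{\l,k}$ but not in $H_{n,1}^{\l,k}$. \textbf{The work is essentially bookkeeping rather than conceptual}: the two substantive points are (i) confirming that $\widetilde Y_{n,1}$ really has degree $n$ in $\cos\theta$ (no cancellation of the top term in \eqref{eq:basis}), and (ii) tracking the chain of normalization constants so that the scalars in the previous paragraph equal $1$. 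If one instead wants a proof independent of Proposition~\ref{prop:basis}, one can define $Y_{n,i}^{\l,k}$ by \eqref{eq:Yn-pn} and verify directly that it is annihilated by $\Delta_h$; the delicate point there is that the sieved polynomials $p_n(w_{\l\mp\f12}^{(k)})$ satisfy no second-order differential equation, so the difference terms of $\Delta_h$ must be carried through honestly rather than absorbed into an ODE.
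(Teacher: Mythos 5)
Your argument is correct, and for the first half it takes a genuinely different route from the paper. For the identification \eqref{eq:Yn-pn} the paper simply cites \cite[Section 4.2]{DX}, where the statement is proved in a much more general setting; you instead give a self-contained derivation: $\sigma_0\colon(x_1,x_2)\mapsto(x_1,-x_2)$ lies in $I_k$ and commutes with $\Delta_h$, so $\CH_n(h_\l)$ splits into its even and odd parts in $x_2$, each one-dimensional by Proposition~\ref{prop:basis} together with $\dim\CH_n=2$; the substitution $t=\cos\t$ then turns the angular orthogonality \eqref{eq:h-ortho} into orthogonality with respect to $w_{\l\mp\f12}^{(k)}$, and the degree count (which, as you note, follows from mutual orthogonality alone --- $N+1$ pairwise orthogonal nonzero polynomials cannot all have degree $\le N-1$ --- so you do not actually need completeness) pins down $q_n$ as a multiple of $p_n$. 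What this buys is independence from the external reference at the cost of leaning on Proposition~\ref{prop:basis} to certify that both parity subspaces are nonzero. For the norm formulas your computation is exactly the paper's: fold $[0,2\pi]$ onto $[-1,1]$, absorb $\sin^2\t$ into the weight, and use $b_{\l+\f12}=2b_{\l-\f12}$ to produce the factor $\f12$.

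One remark your careful bookkeeping would surface: since $Y_{n,2}^{\l,k}$ involves $p_{n-1}\big(w_{\l+\f12}^{(k)};\cdot\big)$, the substitution yields $H_{n,2}^{\l,k}=\f12\,h_{n-1}\big(w_{\l+\f12}^{(k)}\big)$; the index $n$ appearing in \eqref{eq:Yn-pn-norm} and in the last line of the paper's own computation is an index shift (a typo carried from the proof into the statement), not something your argument gets wrong.
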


\begin{proof}
The relation \eqref{eq:Yn-pn} is known in a much more general setting, see \cite[Section 4.2]{DX}. We compute
the norm of $Y_{n,2}^{\l,\k}$, 
\begin{align*}
   H_{n,2}^{\l,k} & = c_\l \int_{-\pi}^\pi \left|Y_{n,2}^{\l,k}(\cos \t,\sin\t)  \right|^2 |\sin (k \t)|^{2\l}  \dd \t  \\
    &  = 2 c_\l \int_{0}^\pi \left |p_{n-1} \left(w_{\l+\f12}^{(k)};  \cos \t\right)\right|^2 (\sin \t)^2 |\sin (k \t)|^{2\l} \dd \t  \\
    &  = b_{\l - \f12} \int_{-1}^1 \left |p_{n-1} \left(w_{\l+\f12}^{(k)};  t\right)\right|^2 w_{\l+\f12}^{(k)} (t) \dd \t  
     = \frac12 h_n (w_{\l+\f12}),
\end{align*}
where we have used $b_{\l+\f12} = 2 b_{\l-\f12}$ in the last step. The case of $H_{n,1}^{\l}$ can be verified
similarly and is easier. 
\end{proof}

The polynomials $p_n(w_{\l \mp \f12}^{(k)};\cdot)$ are called {\it sieved} Gegenbauer polynomials since their 
three-term relation possesses a structure that can be viewed as a sieve is operated on the recurrence 
relations of the Gegenbauer polynomials. These polynomials are studied in \cite{AAA}, where they are 
defined by their recurrence relations.  

An explicit formula of $p_n(w_{\l \mp \f12}^{(k)};\cdot)$ is given in Proposition \ref{prop:basis}. Indeed, it is 
easy to see that $\wh Y_{mk +j}$ in \eqref{eq:basis} is a polynomial of degree $n$ in $t = \cos \t$ and 
$\wh Y_{mk +j}$ in \eqref{eq:basis} is equal to $\sin \t$ multiple of a polynomial of degree $n-1$ in $t= \cos \t$.
It turns out that a simpler basis can be given in this case; see Proposition \ref{prop:GGk} in the following section. 
\subsection{Poisson kernels for $h$-harmonics and sieved Gegenbauer polynomials}
The formula of the intertwining operator in Theorem \ref{thm:main} can be used to derive a closed form 
formula for the Poisson kernels \eqref{eq:Poisson} when one argument is at the vertexes of a regular 
polygon. We need two lemmas. 

\begin{lem} \label{lem:integral}
Let $k =2,3,\ldots$ and $\boldsymbol{\large {\lambda}} = (\l_0,\ldots, \l_{k-1})$ with $\l_i > 0$, $0 \le i \le k-1$. 
For $(x_0,x_1,\ldots, x_{k-1}) \in \RR^k$ and $r \ge 0$ such that $r|x_i| < 1$, $0 \le i\le k-1$,  
\begin{equation*}
    \prod_{i=0}^{k-1}  \frac{ 1}{(1-  2 r x_i+r^2)^{\l_i}} =  \frac{\Gamma(|\boldsymbol{\large {\lambda}}|)}{\prod_{i=0}^{k-1}    \Gamma(\l_i)} \int_{T^{k-1}} \frac{1}{ (1- 2 r \sum_{i=0}^{k-1} x_i u_i + r^2)^{|\boldsymbol{\large {\lambda}}|}} 
      \prod_{i=0}^{k-1} u_i^{\l_i-1} \dd u.
\end{equation*}
\end{lem}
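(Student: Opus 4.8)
The plan is to prove Lemma~\ref{lem:integral} by reducing the $k$-fold product to a single power via an iterated Dirichlet-type integral, using the classical beta integral as the base case. First I would recall the one-variable identity
\begin{equation*}
  \frac{1}{A^{\a}B^{\b}} = \frac{\Gamma(\a+\b)}{\Gamma(\a)\Gamma(\b)} \int_0^1 \frac{u^{\a-1}(1-u)^{\b-1}}{\big(uA + (1-u)B\big)^{\a+\b}}\,\dd u,
\end{equation*}
valid for $A,B>0$ and $\a,\b>0$ (it follows from the standard beta integral after the substitution that turns $\int_0^\infty t^{\a-1}(1+t)^{-\a-\b}\,\dd t$ into the quoted form, with $t = uA/((1-u)B)$). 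Applying this with $A = 1-2rx_0+r^2$ and $B = 1-2rx_1+r^2$ collapses the first two factors into a single factor whose base is $1 - 2r(u x_0 + (1-u)x_1) + r^2$, because the affine combination $u A + (1-u) B$ distributes over the $1+r^2$ and the $-2r x_i$ terms. This is the engine of the whole argument.

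Next I would iterate: having combined the factors indexed $0,1,\dots,j$ into $(1-2r\,\la \wt x, \wt u\ra + r^2)^{-(\l_0+\cdots+\l_j)}$ for an appropriate point $\wt u$ in a $j$-dimensional simplex and a convex combination $\wt x$ of $x_0,\dots,x_j$, I multiply in the next factor $(1-2rx_{j+1}+r^2)^{-\l_{j+1}}$ and apply the two-variable identity again. The bookkeeping is the routine part: one checks that after $k-1$ applications the accumulated measure is exactly the Dirichlet density $\prod_{i=0}^{k-1} u_i^{\l_i-1}$ on $T^{k-1}$ (with $u_0 = 1-u_1-\cdots-u_{k-1}$), the accumulated exponent is $|\brho{\l}| = \l_0+\cdots+\l_{k-1}$, the normalizing constant telescopes to $\Gamma(|\l|)/\prod_i \Gamma(\l_i)$, and the base of the single remaining factor is $1 - 2r\sum_{i=0}^{k-1} x_i u_i + r^2$. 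Alternatively, and perhaps cleaner to write, I could do all $k$ factors in one stroke using the multivariate Dirichlet integral
\begin{equation*}
  \prod_{i=0}^{k-1} A_i^{-\l_i} = \frac{\Gamma(|\l|)}{\prod_{i=0}^{k-1}\Gamma(\l_i)} \int_{T^{k-1}} \frac{\prod_{i=0}^{k-1} u_i^{\l_i-1}}{\big(\sum_{i=0}^{k-1} u_i A_i\big)^{|\l|}}\,\dd u,
\end{equation*}
which is itself provable by induction from the two-variable case, and then substitute $A_i = 1-2rx_i+r^2$ and use $\sum_i u_i A_i = 1 - 2r\sum_i u_i x_i + r^2$ since $\sum_i u_i = 1$.

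The conditions to watch are positivity of the bases: $r|x_i|<1$ guarantees $1-2rx_i+r^2 = (1-r)^2 + 2r(1-x_i) > 0$ (and similarly $>0$ using $1+x_i$), hence every $A_i>0$ and every convex combination $\sum_i u_i A_i > 0$ on $T^{k-1}$, so all integrals converge absolutely and Fubini applies at each iteration; the hypothesis $\l_i>0$ is exactly what makes the Dirichlet density integrable. The main obstacle is not conceptual but organizational: one must set up the induction so that the intermediate simplex variables and the telescoping of Gamma factors are transparent, and verify that the affine-combination property $\sum_i u_i(1-2rx_i+r^2) = 1 - 2r\sum_i u_i x_i + r^2$ is used correctly at every stage — this is where a sloppy change of variables would introduce spurious Jacobian factors. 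Once the two-variable identity and the convex-combination observation are in place, the rest is a direct induction on $k$.
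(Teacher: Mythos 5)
Your argument is sound, but note that the paper does not actually prove this lemma: it is quoted from \cite{X15} with no in-text proof, so what you have supplied is a self-contained derivation where the paper has only a citation. Your route --- the multivariate Feynman/Dirichlet parametrization
$\prod_{i=0}^{k-1} A_i^{-\lambda_i} = \frac{\Gamma(|\boldsymbol{\lambda}|)}{\prod_i\Gamma(\lambda_i)}\int_{T^{k-1}} \bigl(\sum_i u_i A_i\bigr)^{-|\boldsymbol{\lambda}|}\prod_i u_i^{\lambda_i-1}\,\dd u$,
followed by the observation that $\sum_i u_i(1-2rx_i+r^2) = 1-2r\sum_i u_i x_i + r^2$ because $\sum_i u_i = 1$ --- is the natural one, and both the inductive version and the one-stroke version (e.g.\ via $A^{-\lambda}=\Gamma(\lambda)^{-1}\int_0^\infty t^{\lambda-1}e^{-At}\,\dd t$ and the substitution $t_i=\rho u_i$) are standard and correct; your two-variable base case and the convex-combination mechanism check out. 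One caveat: your positivity justification is not right as stated. From $r|x_i|<1$ alone one has $1-2rx_i+r^2=(1-r|x_i|)^2+r^2(1-x_i^2)$, which can be negative when $|x_i|>1$ (take $x_i=10$, $r=0.09$); the rewriting $(1-r)^2+2r(1-x_i)$ yields positivity only when $x_i\le 1$. This is really a defect inherited from the lemma's printed hypotheses rather than from your argument --- in every application here $x_i$ is a cosine, so $|x_i|\le 1$ and everything goes through --- but you should either add $|x_i|\le 1$ to your hypotheses or note that for general real $x_i$ the fractional powers on the left are not even well defined. With that adjustment the proof is complete.
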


This lemma is established recently in \cite{X15}. The second lemma is elementary; a proof is outlined for completeness. 

\begin{lem} \label{lem:trig-identity}
For $k=2,3,4,\ldots$,
$$
   1 -2 r^k  \cos (k \t) + r^{2k} =  \prod_{j=0}^{k-1} \left(1 -2 r \cos \left(\t - \frac{2j \pi}{k}\right) + r^2\right). 
$$
\end{lem}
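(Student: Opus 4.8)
The plan is to prove the factorization
$$
  1 - 2r^k \cos(k\t) + r^{2k} = \prod_{j=0}^{k-1}\left(1 - 2r\cos\left(\t - \tfrac{2j\pi}{k}\right) + r^2\right)
$$
by passing to complex exponentials, where each quadratic factor becomes a product of two linear factors. First I would write $z = r e^{\i\t}$, so that $1 - 2r\cos\t + r^2 = (1 - r e^{\i\t})(1 - r e^{-\i\t}) = (1-z)(1-\bar z)$ when $z = re^{\i\t}$; more to the point, for each $j$ the factor on the right is $(1 - r e^{\i(\t - 2j\pi/k)})(1 - r e^{-\i(\t-2j\pi/k)})$. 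Setting $\o = e^{2\pi\i/k}$ a primitive $k$-th root of unity and $w = re^{\i\t}$, the right-hand side becomes
$$
  \prod_{j=0}^{k-1}\left(1 - w\,\o^{-j}\right)\prod_{j=0}^{k-1}\left(1 - \bar w\,\o^{j}\right).
$$

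Next I would use the elementary polynomial identity $\prod_{j=0}^{k-1}(1 - \zeta \o^{j}) = 1 - \zeta^k$, valid for any $\zeta\in\CC$, which follows because the $\o^{-j}$ (equivalently $\o^j$) run over all $k$-th roots of unity and hence $\prod_j (X - \o^j) = X^k - 1$; evaluating at $X = 1/\zeta$ and clearing denominators gives the claim. Applying this with $\zeta = w$ and with $\zeta = \bar w$ yields
$$
  \prod_{j=0}^{k-1}\left(1 - w\,\o^{-j}\right)\prod_{j=0}^{k-1}\left(1 - \bar w\,\o^{j}\right) = (1 - w^k)(1 - \bar w^k).
$$
Since $w = re^{\i\t}$, we have $w^k = r^k e^{\i k\t}$, so $(1-w^k)(1-\bar w^k) = 1 - (w^k + \bar w^k) + |w|^{2k} = 1 - 2r^k\cos(k\t) + r^{2k}$, which is exactly the left-hand side. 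Finally I would note that both sides are polynomials in $r$ with real coefficients (indeed, one checks the right-hand side is real because the factors for $j$ and suitable partners are complex conjugates, or simply because each quadratic factor is manifestly real), so the identity derived over $\CC$ is the desired identity.

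There is no real obstacle here; the only point requiring a line of care is the reindexing that shows $\{\o^{-j} : 0\le j\le k-1\}$ is the full set of $k$-th roots of unity (so that the product $\prod_j(1 - w\o^{-j})$ telescopes to $1 - w^k$), and the observation that $2\cos\alpha = e^{\i\alpha} + e^{-\i\alpha}$ turns each real quadratic factor into the conjugate pair of linear factors used above. Everything else is routine algebra with roots of unity.
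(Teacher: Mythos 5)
Your proof is correct and follows essentially the same route as the paper's: both factor each real quadratic into the conjugate pair $(1-re^{\i(\t-2j\pi/k)})(1-re^{-\i(\t-2j\pi/k)})$ and then collapse the product over $j$ via $\prod_{j=0}^{k-1}(1-\zeta\o^j)=1-\zeta^k$ coming from $z^k-1=\prod_j(z-\o^j)$. Nothing further is needed.
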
 

\begin{proof}
Using $z^k -1 = \prod_{i=0}^{k-1} (z- e^{\frac{2 \pi j \i}{k}})$, it is easy to see that 
$$
  1- r^k e^{\i \t} = \prod_{j=0}^{k-1} \left(1-r e^{\i \t - \frac{2 \pi j \i}{k}} \right). 
$$
The stated formula then follows from $(1- r e^{\i \t})(1- r e^{-\i \t}) = 1-2r \cos \t +r^2$. 
\end{proof}

Our first result gives a closed formula for the Poisson kernel \eqref{eq:Poisson2} associated to the dihedral group 
$I_k$ with $k=2,3,4,\ldots$. 

\begin{thm}\label{thm:generating}
For $k= 2,3, \ldots$ and $p =0,1,\ldots, k-1$, let $y_{p,k} = (\cos \frac{p \pi}{k}, \sin \frac{p \pi}{k})$. Then, for 
$\|x\| =1$ and $0 \le r <1$, 
\begin{align} \label{eq:generating}
  &  P\left (h_{\l}; x, r y_{p,k} \right)  = \sum_{n=0}^\infty \left(\frac{Y_{n,1}(x)Y_{n,1}(y_{p,k})}{H_{k,1}} +
     \frac{Y_{n,2}(x)Y_{n,2}(y_{p,k})}{H_{n,2}} \right) r^n\\
   & \qquad \, =   \frac{1-r^2}{(1-2r (\cos \left( \frac{p \pi}{k}\right) x_1+ \sin \left( \frac{p \pi}{k}\right)x_2)+ r^2) 
            (1-2  (-1)^p r^k T_k (x_1) + r^{2k})^{\l} }.
       \notag
\end{align}
\end{thm}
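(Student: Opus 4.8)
The plan is to combine Theorem~\ref{thm:main2} (the integral formula for $V_\l$ on the functions $F_p$) with the integral identity in Lemma~\ref{lem:integral} and the factorization in Lemma~\ref{lem:trig-identity}. Starting from the Poisson kernel formula \eqref{eq:Poisson2}, we must compute
$$
  P\left(h_\l;x,ry_{p,k}\right) = V_\l\left[\frac{1-r^2}{(1-2r\la\cdot,y_{p,k}\ra+r^2)^{k\l+1}}\right](x).
$$
Since $\la z,y_{p,k}\ra = \cos(\tfrac{p\pi}{k})z_1 + \sin(\tfrac{p\pi}{k})z_2$ for $z=(z_1,z_2)$, the function inside $V_\l$ is, up to the factor $1-r^2$, exactly of the form $F_p$ with $f(t) = (1-2rt+r^2)^{-(k\l+1)}$. (The $1-r^2$ is a constant as far as $V_\l$ is concerned, since $V_\l$ acts in the space variable; and $V_\l 1 = 1$.) Thus Theorem~\ref{thm:main2} applies directly and gives
$$
  P\left(h_\l;x,ry_{p,k}\right) = (1-r^2)\,a_\l^{(k)}\int_{T^{k-1}} \frac{u_0\prod_{i=0}^{k-1}u_i^{\l-1}}{\left(1 - 2r\sum_{j=0}^{k-1}\cos(\t-\tfrac{p\pi}{k}-\tfrac{2j\pi}{k})u_j + r^2\right)^{k\l+1}}\,\dd u,
$$
writing $x=(\cos\t,\sin\t)$.

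Next I would recognize the right-hand side as an instance of Lemma~\ref{lem:integral}. Apply that lemma with $k$ replaced by $k$, with the points $x_j := \cos(\t - \tfrac{p\pi}{k} - \tfrac{2j\pi}{k})$, and with the parameter vector $\boldsymbol{\lambda} = (\l+1,\l,\l,\dots,\l)$, so that $|\boldsymbol{\lambda}| = k\l+1$. The weight $\prod_i u_i^{\l_i-1}$ then becomes $u_0^{\l}\prod_{i=1}^{k-1}u_i^{\l-1}$; but our integrand has $u_0\prod_{i=0}^{k-1}u_i^{\l-1} = u_0^\l\prod_{i=1}^{k-1}u_i^{\l-1}$, which matches exactly. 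The Gamma prefactor in Lemma~\ref{lem:integral} is $\Gamma(k\l+1)/(\Gamma(\l+1)\Gamma(\l)^{k-1})$, and one checks this is precisely $1/a_\l^{(k)}$ using $a_\l^{(k)} = \l\Gamma(\l)^k/\Gamma(k\l+1)$ and $\Gamma(\l+1) = \l\Gamma(\l)$. Hence, running Lemma~\ref{lem:integral} backwards, the integral collapses to the product
$$
  P\left(h_\l;x,ry_{p,k}\right) = (1-r^2)\prod_{j=0}^{k-1}\frac{1}{1 - 2r\cos(\t-\tfrac{p\pi}{k}-\tfrac{2j\pi}{k}) + r^2}\cdot\frac{1}{1-2r\cos(\t-\tfrac{p\pi}{k}) + r^2}^{\,0},
$$
that is, the factor with exponent $\l+1$ contributes one extra copy of its linear factor beyond the $\l$-th power. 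Being careful: with $\boldsymbol\lambda=(\l+1,\l,\dots,\l)$ the left side of Lemma~\ref{lem:integral} is $(1-2rx_0+r^2)^{-(\l+1)}\prod_{j=1}^{k-1}(1-2rx_j+r^2)^{-\l}$, so I should instead take {\it all} $\l_j=\l$ together with an extra single factor $u_0$; the cleanest route is to split $u_0 = u_0^{\l}\cdot u_0^{1-\l}\cdot$(no), so rather I apply Lemma~\ref{lem:integral} with $\boldsymbol\lambda=(\l+1,\l,\dots,\l)$ directly and obtain
$$
  P(h_\l;x,ry_{p,k}) = \frac{1-r^2}{\bigl(1-2r\cos(\t-\tfrac{p\pi}{k})+r^2\bigr)\,\prod_{j=0}^{k-1}\bigl(1-2r\cos(\t-\tfrac{p\pi}{k}-\tfrac{2j\pi}{k})+r^2\bigr)^{\l}}.
$$

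Finally I would simplify the product over $j$. By Lemma~\ref{lem:trig-identity} applied with the angle $\t - \tfrac{p\pi}{k}$ in place of $\t$,
$$
  \prod_{j=0}^{k-1}\left(1 - 2r\cos\Bigl(\t - \tfrac{p\pi}{k} - \tfrac{2j\pi}{k}\Bigr) + r^2\right) = 1 - 2r^k\cos\bigl(k\t - p\pi\bigr) + r^{2k} = 1 - 2(-1)^p r^k\cos(k\t) + r^{2k},
$$
using $\cos(k\t-p\pi) = (-1)^p\cos(k\t)$. Since $\|x\|=1$ we have $x_1=\cos\t$ and $\cos(k\t) = T_k(x_1)$, while $1-2r\cos(\t-\tfrac{p\pi}{k})+r^2 = 1 - 2r(\cos\tfrac{p\pi}{k}\,x_1 + \sin\tfrac{p\pi}{k}\,x_2) + r^2$. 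Substituting these two identities into the displayed formula yields exactly \eqref{eq:generating}, and the first equality in \eqref{eq:generating} is just the definition \eqref{eq:Poisson2} of the Poisson kernel in terms of the reproducing kernels $P_n$. The main obstacle is bookkeeping: matching the parameter vector and the Gamma constants in Lemma~\ref{lem:integral} against $a_\l^{(k)}$, and correctly tracking the single extra linear factor coming from the $\l+1$ entry — no deep new idea is needed once Theorems~\ref{thm:main2} and Lemmas~\ref{lem:integral}, \ref{lem:trig-identity} are in hand.
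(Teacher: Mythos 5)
Your proposal is correct and follows essentially the same route as the paper: apply Theorem~\ref{thm:main2} to $f(t)=(1-2rt+r^2)^{-(k\l+1)}$, collapse the simplex integral via Lemma~\ref{lem:integral} with $\boldsymbol\lambda=(\l+1,\l,\dots,\l)$, and finish with Lemma~\ref{lem:trig-identity} and $\cos(k\t-p\pi)=(-1)^p\cos(k\t)$. The parameter and Gamma-constant bookkeeping you carry out (including the self-correction about the extra linear factor from the $\l+1$ entry) lands exactly on the paper's intermediate formula, so nothing is missing.
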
 

\begin{proof}
Applying \eqref{eq:main} to the function $f(t) = (1-2 r t + r^2)^{-(k\l+1)}$, the Poisson kernel in \eqref{eq:Poisson2}
becomes
\begin{align*}
    P\left (h_{\l,\mu}; x, r y_{p,k} \right)  = a_\l^{(k)} \int_{T^{k-1}}  & 
      \frac{1-r^2}{(1- 2 r \sum_{j=0}^{k-1} \cos \left(\t - \frac{2 j\pi}{k}- \frac{p \pi}{k}\right) u_j +    r^2)^{k \l+1}} \\
      &   \times  u_0^\l \prod_{i=1}^{k-1} u_i^{\l-1}\dd u. 
\end{align*}
Applying the identity in Lemma \ref{lem:integral}, then the identity in Lemma \ref{lem:trig-identity} with $\t$ replaced 
by $\t-  \frac{ p \pi}{k}$, we see that 
the above integral is equal to 
\begin{align*}
 & \frac{1-r^2}{(1-2r \cos \left(\t - \frac{p \pi}{k}\right)  + r^2) \prod_{j=0}^{k-1} (1-2 r \cos \left(\t - \frac{2 j\pi}{k}- \frac{p \pi}{k}\right)  + r^2)^{\l} } \\
  & \qquad \quad =  \frac{1-r^2}{(1-2r \cos \left(\t - \frac{p \pi}{k}\right)  + r^2)   (1-2 r^k \cos \left( k \t - p \pi \right)  + r^{2k})^{\l}},
\end{align*}
which leads to the stated result, since $\cos \left( k \t - p \pi \right) = (-1)^p \cos (k\t)$, after setting $x_1 = \cos\t$ 
and $x_2 = \sin \t$.  
\end{proof}

The Poisson kernel associated to $I_k$ can be written in terms of the kernel associated to $I_2$ and the
latter has an integral expression \cite[Theorem 7.6.11]{DX}. This is used to derive a complicated integral 
formula for the kernel in \cite{Bie}. It is worth mentioning that there has also been attempt on explicit expression 
for the kernel $V [e^{\i\la \cdot,y\ra}](x)$ in the dihedral group setting \cite{DDY}, but the result is in series rather 
than in integral. Our partial closed form of the intertwining operator gives satisfactory formulas when $y = y_{p,k}$
in both cases.

The definition of the Poisson kernel $P\left (h_{\l}; x, y \right)$ is independent of the choice of bases of 
$\CH_n(h_\l)$. For the basis given in Proposition \ref{prop:Yn-pn}, $Y_{n,1}^{\l,\k}$ is a function of $x_1$ only
and $Y_{n,2}^{\l,\k}$ contains a single $x_2$. As a consequence, we can separate the series for 
$Y_{n,1}^{\l,\k}$ and $Y_{n,2}^{\l,\k}$ by considering either the addition or the difference of 
$P\left (h_{\l}; (x_1,x_2), y_{p,m} \right)$ and $P\left (h_{\l}; (x_1,-x_2), y_{p,m} \right)$.
Given the relation \eqref{eq:Yn-pn}, this leads to the Poisson kernes for $w_{\l \pm \f12}^{(k)}$.  

\begin{thm} \label{thm:poisson2}
For $k=1, 2,3 \ldots$, $0\le p \le k-1$, $\|x\| =1$ and $0 \le r <1$, 
\begin{align} \label{eq:poisson1}
   &\phi_r  \left(w_{\l-\f12}^{(k)}; t, \cos \left(\tfrac{p \pi}{k}\right)\right) =  \sum_{n=0}^\infty 
      \frac{p_{n}\big(w_{\l-\f12}^{(k)}; t\big)p_{n}\big(w_{\l-\f12}^{(k)};  \cos \f{p \pi}{k}\big)}
          {h_n \big(w_{\l-\f12}^{(k)}\big)} r^n \\
        = &  
        \frac{(1-r^2)(1-2r \cos (\frac{p \pi}{k}) t+r^2)}{\big ( (1-2r \cos (\frac{p \pi}{k})t + r^2)^2 - 
           4r^2 \sin^2(\frac{p \pi}{k}) (1-t^2) \big) (1-2(-1)^p r^k T_k (t) + r^{2k})^{\l} }, \notag
\end{align}
and 
\begin{align} \label{eq:poisson2}
 & \phi_r \left(w_{\l+\f12}^{(k)}; t, \cos \left(\tfrac{p \pi}{k}\right)\right) =  \sum_{n=0}^\infty 
      \frac{p_{n}\big(w_{\l+\f12}^{(k)}; t\big)p_{n}\big(w_{\l+\f12}^{(k)};  \cos \f{p \pi}{k}\big)}
          {h_n \big(w_{\l+\f12}^{(k)}\big)} r^n \\
        = &  
        \frac{ 1-r^2}{ \big ( (1-2r \cos (\frac{p \pi}{k}) t + r^2)^2 - (2r \sin(\frac{p \pi}{k}))^2 (1-t^2) \big)
              \big(1-2 (-1)^p r^k T_k (t) + r^{2k} \big )^{\l} }. \notag
\end{align}
\end{thm}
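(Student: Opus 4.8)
The plan is to derive Theorem~\ref{thm:poisson2} directly from Theorem~\ref{thm:generating} by exploiting the special structure of the basis in Proposition~\ref{prop:Yn-pn}: $Y_{n,1}^{\l,k}$ depends on $x_1$ alone, while $Y_{n,2}^{\l,k}$ is linear in $x_2$. Concretely, take $\|x\|=1$, write $x=(t,\sqrt{1-t^2})$ and $\bar x=(t,-\sqrt{1-t^2})$, and observe that $Y_{n,1}(x)=Y_{n,1}(\bar x)=r^n p_n(w_{\l-\f12}^{(k)};t)$ with $r=1$ (since $\|x\|=1$), whereas $Y_{n,2}(x)=-Y_{n,2}(\bar x)$. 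Hence adding the two Poisson kernels $P(h_\l;x,ry_{p,k})$ and $P(h_\l;\bar x,ry_{p,k})$ kills the $Y_{n,2}$-terms and leaves $2\sum_n \tfrac{Y_{n,1}(x)Y_{n,1}(y_{p,k})}{H_{n,1}}r^n$, while subtracting them isolates the $Y_{n,2}$-series. Using $H_{n,1}=h_n(w_{\l-\f12}^{(k)})$, $H_{n,2}=\tfrac12 h_n(w_{\l+\f12}^{(k)})$ and $Y_{n,1}(y_{p,k})=p_n(w_{\l-\f12}^{(k)};\cos\tfrac{p\pi}{k})$, $Y_{n,2}(y_{p,k})=\sin\tfrac{p\pi}{k}\,p_{n-1}(w_{\l+\f12}^{(k)};\cos\tfrac{p\pi}{k})$ from Proposition~\ref{prop:Yn-pn}, these combinations become exactly the two sieved-Gegenbauer Poisson kernels up to explicit elementary factors.

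The second half of the work is to compute the right-hand sides. By Theorem~\ref{thm:generating}, for $x=(t,s)$ with $s=\sqrt{1-t^2}$ we have
\begin{equation*}
  P(h_\l;x,ry_{p,k}) = \frac{1-r^2}{\big(1-2r(\cos\tfrac{p\pi}{k}\,t+\sin\tfrac{p\pi}{k}\,s)+r^2\big)\big(1-2(-1)^p r^k T_k(t)+r^{2k}\big)^\l},
\end{equation*}
and $P(h_\l;\bar x,ry_{p,k})$ is obtained by flipping $s\mapsto -s$; crucially the $T_k(t)$-factor is unchanged since it depends on $t=x_1$ only. So I would factor out the common $(1-r^2)\big(1-2(-1)^p r^k T_k(t)+r^{2k}\big)^{-\l}$ and combine the two reciprocals of the linear factors $A_\pm := 1-2r(\cos\tfrac{p\pi}{k}\,t\pm\sin\tfrac{p\pi}{k}\,s)+r^2$ over a common denominator $A_+A_-$. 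A short computation gives $A_++A_- = 2(1-2r\cos\tfrac{p\pi}{k}\,t+r^2)$ and $A_+A_- = (1-2r\cos\tfrac{p\pi}{k}\,t+r^2)^2 - 4r^2\sin^2\tfrac{p\pi}{k}\,s^2 = (1-2r\cos\tfrac{p\pi}{k}\,t+r^2)^2 - 4r^2\sin^2\tfrac{p\pi}{k}(1-t^2)$, which is precisely the denominator appearing in both \eqref{eq:poisson1} and \eqref{eq:poisson2}. The sum $\tfrac1{A_+}+\tfrac1{A_-}=\tfrac{A_++A_-}{A_+A_-}$ then produces \eqref{eq:poisson1}, and the difference $\tfrac1{A_+}-\tfrac1{A_-}=\tfrac{A_--A_+}{A_+A_-}=\tfrac{4r\sin(p\pi/k)s}{A_+A_-}$ cancels the factor $\sin\tfrac{p\pi}{k}$ and the stray $s=\sqrt{1-t^2}=x_2$ coming from $Y_{n,2}$, yielding \eqref{eq:poisson2} after the normalization constants $H_{n,2}=\tfrac12 h_n$ are accounted for. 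Finally I would note that the case $k=1$ (not covered by Theorem~\ref{thm:generating}) reduces to the classical $\ZZ_2$ Poisson kernel formula \eqref{eq:intertwZ2}--\eqref{eq:Poisson} with $\mu=\l$, $T_1(t)=t$, which one checks matches the claimed formulas directly; alternatively $k=1$ is a degenerate instance of the same computation.

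The parity/symmetry decomposition and the algebra of combining $A_+$ and $A_-$ are both routine, so there is no genuinely hard step; the only point requiring care is bookkeeping the constants — verifying that the factor $\tfrac12$ in $H_{n,2}=\tfrac12 h_n(w_{\l+\f12}^{(k)})$, the $\sin\tfrac{p\pi}{k}$ from $Y_{n,2}(y_{p,k})$, and the $\sqrt{1-t^2}$ from $Y_{n,2}(x)$ combine to leave exactly the stated right-hand side in \eqref{eq:poisson2} with no residual factor of $\sqrt{1-t^2}$ or $\sin\tfrac{p\pi}{k}$. I would also double-check that the series manipulations are legitimate, i.e.\ that the rearrangement of $\sum_n(\cdots)r^n$ into even and odd parts is justified by absolute convergence for $0\le r<1$, which follows from the closed-form bounds on $P(h_\l;\cdot,\cdot)$.
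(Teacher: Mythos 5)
Your proposal is correct and follows essentially the same route as the paper: decompose $P(h_\l;x,ry_{p,k})$ into its even and odd parts in $x_2$ using the structure of the basis in Proposition~\ref{prop:Yn-pn}, identify these with the two sieved-Gegenbauer Poisson kernels via \eqref{eq:Yn-pn} and \eqref{eq:Yn-pn-norm}, and then combine the two closed forms from Theorem~\ref{thm:generating} over the common denominator $A_+A_-$, cancelling the factor $x_2\sin\tfrac{p\pi}{k}$ in the odd part. The only differences are presentational (you spell out the $A_\pm$ algebra and the $k=1$ reduction, which the paper handles in a remark).
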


\begin{proof}
The first identity \eqref{eq:poisson1} follows precisely from \eqref{eq:generating} since  
$$
\phi_r  \left(w_{\l-\f12}^{(k)}; t, \cos \left(\tfrac{p \pi}{k}\right)\right) = 
     \frac12 \left[ P\left (h_{\l,\mu}; (x_1,x_2), r y_{p,m} \right) + P\left (h_{\l,\mu}; (x_1,-x_2), r y_{p,m} \right) \right]
$$ 
when we use \eqref{eq:Yn-pn} and \eqref{eq:Yn-pn-norm}. The second identity \eqref{eq:poisson2} is a bit more
complicated, since 
\begin{align*}
2 x_2  & \sin (\tfrac{p\pi}{k})  \phi_r  \left(w_{\l-\f12}^{(k)}; t, \cos \left(\tfrac{p \pi}{k}\right)\right) \\
 &  =   \frac12 \left[ P\left (h_{\l,\mu}; (x_1,x_2), ry_{p,m} \right) - P\left (h_{\l,\mu}; (x_1,-x_2), ry_{p,m} \right) \right]
\end{align*}
by \eqref{eq:Yn-pn} and \eqref{eq:Yn-pn-norm}, where the factor $2$ comes from \eqref{eq:Yn-pn-norm}. 
Working out the righthand side by \eqref{eq:generating} and canceling $2 x_2 \sin (\tfrac{p\pi}{k})$ that appears 
in the nominator, we obtain \eqref{eq:poisson2}. 
\end{proof}

The statement of the above theorem includes the case $k=1$, for which \eqref{eq:poisson1} is the classical 
identity \eqref{eq:Gegen2} for the Gegenbauer polynomials. For $k > 1$, these results are new except in the 
case of $p=0$ for $w_{\l -\f12}$. We state the case $p=0$ as a corollary. 

\begin{cor} \label{cor:generating}
For $k=1, 2,3 \ldots$, $t \in [-1,1]$ and $0 \le r <1$, 
\begin{align} \label{eq:generating1}
   \sum_{n=0}^\infty \frac{p_{n}\big(w_{\l-\f12}^{(k)}; t\big)p_{n}\big(w_{\l-\f12}^{(k)};  1\big)}
          {h_n \big(w_{\l-\f12}^{(k)}\big)} r^n = \frac{1-r^2}{(1-2r t + r^2)(1-2  r^k T_k (t) + r^{2k})^{\l} }, 
\end{align}
and 
\begin{align} \label{eq:generating2}
  \sum_{n=0}^\infty \frac{p_{n}\big(w_{\l+\f12}^{(k)}; t\big)p_{n}\big(w_{\l+\f12}^{(k)};  1\big)}
          {h_n \big(w_{\l+\f12}^{(k)}\big)} r^n =   
        \frac{ 1-r^2}{(1-2r t + r^2)^2 (1-2 r^k T_k (t) + r^{2k})^{\l} }.  
\end{align}
\end{cor}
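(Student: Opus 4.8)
The plan is to read Corollary~\ref{cor:generating} off \thmref{thm:poisson2} by setting $p=0$, the only delicate point being one specialization. For $p=0$ one has $\cos\f{p\pi}{k}=1$, $\sin\f{p\pi}{k}=0$ and $(-1)^p=1$, so the term $4r^2\sin^2(\f{p\pi}{k})(1-t^2)$ in the denominators of \eqref{eq:poisson1} and \eqref{eq:poisson2} drops; \eqref{eq:poisson1} then collapses to $\frac{(1-r^2)(1-2rt+r^2)}{(1-2rt+r^2)^2(1-2r^kT_k(t)+r^{2k})^\l}$, and after cancelling one factor $1-2rt+r^2$ this is \eqref{eq:generating1}, while \eqref{eq:poisson2} turns into \eqref{eq:generating2} verbatim. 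For \eqref{eq:generating1} nothing more is needed; in fact the identity also follows directly from \thmref{thm:generating}: at $p=0$ the vertex is $y_{0,k}=(1,0)$, and since by Proposition~\ref{prop:Yn-pn} $Y_{n,2}^{\l,k}$ carries the factor $x_2$, one has $Y_{n,2}^{\l,k}(1,0)=0$, so the series in \eqref{eq:generating} collapses to $\sum_{n\ge0}Y_{n,1}^{\l,k}(x)Y_{n,1}^{\l,k}(1,0)\big(H_{n,1}^{\l,k}\big)^{-1}r^n=\phi_r\big(w_{\l-\f12}^{(k)};x_1,1\big)$ by \eqref{eq:Yn-pn}--\eqref{eq:Yn-pn-norm}, and the right side of \eqref{eq:generating} at $p=0$ is exactly the right side of \eqref{eq:generating1}.

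The point that needs care is \eqref{eq:generating2}, because the derivation of \eqref{eq:poisson2} in the proof of \thmref{thm:poisson2} divides through by $2x_2\sin(\f{p\pi}{k})$ and so is valid only for $1\le p\le k-1$. I would fill the gap by carrying out the same extraction of the $Y_{\cdot,2}$-component of the Poisson kernel but with the second argument kept at an arbitrary angle $\psi$ and then differentiated at $\psi=0$. Writing $\xi(\psi)=(\cos\psi,\sin\psi)$, the decomposition $P_n(h_\l^{(k)};x,y)=Y_{n,1}^{\l,k}(x)Y_{n,1}^{\l,k}(y)\big(H_{n,1}^{\l,k}\big)^{-1}+Y_{n,2}^{\l,k}(x)Y_{n,2}^{\l,k}(y)\big(H_{n,2}^{\l,k}\big)^{-1}$ together with $Y_{n,2}^{\l,k}(\xi(\pm\t))=\pm\sin\t\,p_{n-1}(w_{\l+\f12}^{(k)};\cos\t)$ gives, for every $\psi$,
\begin{equation*}
\frac{P\big(h_\l^{(k)};\xi(\t),r\xi(\psi)\big)-P\big(h_\l^{(k)};\xi(\t),r\xi(-\psi)\big)}{2\sin\psi}=2r\sin\t\,\phi_r\big(w_{\l+\f12}^{(k)};\cos\t,\cos\psi\big),
\end{equation*}
using \eqref{eq:Yn-pn}--\eqref{eq:Yn-pn-norm}. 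On the other hand \eqref{eq:Poisson2} holds for every $y$ on the circle, so by linearity of $V_\l$ the numerator on the left equals $V_\l\big[g_r(\la\cdot,\xi(\psi)\ra)-g_r(\la\cdot,\xi(-\psi)\ra)\big](\xi(\t))$, where $g_r(u)=(1-r^2)(1-2ru+r^2)^{-(k\l+1)}$ is the function appearing in \eqref{eq:Poisson2}. Since $\la x,\xi(\pm\psi)\ra=x_1\cos\psi\pm x_2\sin\psi$, the bracketed function divided by $2\sin\psi$ tends, as $\psi\to0$, to $x_2\,g_r'(x_1)$ uniformly with all derivatives on the unit circle (where $1-2rx_1+r^2\ge(1-r)^2>0$), so $V_\l$ commutes with the limit and one obtains
\begin{equation*}
2r\sin\t\,\phi_r\big(w_{\l+\f12}^{(k)};\cos\t,1\big)=V_\l\big[x_2\,g_r'(x_1)\big](\xi(\t)).
\end{equation*}
Thus, on $\{x:\|x\|=1\}$, the identity \eqref{eq:generating2} is equivalent to the closed-form evaluation $V_\l\big[x_2\,g_r'(x_1)\big](x)=2r\,x_2\,(1-r^2)\big(1-2rx_1+r^2\big)^{-2}\big(1-2r^kT_k(x_1)+r^{2k}\big)^{-\l}$.

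The main obstacle is exactly this evaluation, since $x_2$ times a function of $x_1$ is not among the functions treated by \thmref{thm:main}. I expect to handle it by repeating the verification in the proof of \thmref{thm:main2} for the function $x_2\,g_r'(x_1)$: compute each difference quotient $\big(W(x)-W(x\s_\ell)\big)/\la x,v_\ell\ra$ for the candidate $W(x)=a_\l^{(k)}\int_{T^{k-1}}\dots$ built from $x_2\,g_r'$, integrating by parts on $T^{k-1}$ just as there but now carrying one extra linear factor through the calculation, and then use Lemma~\ref{lem:integral} together with Lemma~\ref{lem:trig-identity} (with $\t$ unchanged, the $p=0$ situation) to fold $(1-2rx_1+r^2)^{-k\l}$ into $(1-2r^kT_k(x_1)+r^{2k})^{-\l}$ exactly as in the proof of \thmref{thm:generating}, the leftover factor $(1-2rx_1+r^2)^{-2}$ and the constant $k\l+1$ combining as claimed. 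An alternative would be to avoid the intertwining operator altogether and derive, from the recurrences of the two families of sieved Gegenbauer polynomials and the relation $w_{\l+\f12}^{(k)}=(1-t^2)\,w_{\l-\f12}^{(k)}$, a contiguity identity of the form $(1-2rt+r^2)\,\phi_r\big(w_{\l+\f12}^{(k)};t,1\big)=\phi_r\big(w_{\l-\f12}^{(k)};t,1\big)$ and combine it with \eqref{eq:generating1}. In every case the classical instance $k=1$, where \eqref{eq:generating2} reduces to \eqref{eq:Gegen2} with parameter $\l+1$, provides a consistency check.
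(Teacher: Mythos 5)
Your treatment of \eqref{eq:generating1} is correct and is exactly the paper's route: the corollary is stated as the $p=0$ case of Theorem~\ref{thm:poisson2}, and for \eqref{eq:poisson1} the specialization is immediate (your alternative observation that $Y_{n,2}^{\l,k}(1,0)=0$ kills the second series in \eqref{eq:generating} is equally valid). Your suspicion about \eqref{eq:generating2} is also well founded: the paper's proof of \eqref{eq:poisson2} extracts the $Y_{\cdot,2}$-component of the Poisson kernel and then cancels the common factor $2x_2\sin(\f{p\pi}{k})$, which at $p=0$ reduces the whole derivation to $0=0$; the paper nonetheless presents \eqref{eq:generating2} as a bare specialization, so you have spotted a real point that the source glosses over.

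The problem is that your replacement argument for \eqref{eq:generating2} does not close the gap. The reduction to the evaluation $V_\l[x_2\,g_r'(x_1)](x)=2r x_2(1-r^2)(1-2rx_1+r^2)^{-2}(1-2r^kT_k(x_1)+r^{2k})^{-\l}$ is a correct reformulation, but that evaluation \emph{is} the whole content of \eqref{eq:generating2}, and you only announce that you ``expect to handle it.'' There are two concrete obstructions to the way you propose to handle it: (a) you cannot compute the limit $\psi\to 0$ of the integral representations, because Theorem~\ref{thm:main} furnishes \eqref{eq:main} only for the discrete directions $\psi=p\pi/k$, not for the generic small $\psi$ appearing in your difference quotient; and (b) $x_2\,g_r'(x_1)$ is a genuinely two-variable composite, precisely the class for which the paper states that no formula for $V_\l$ is known, and you never write down the candidate integral $W$ whose intertwining property you would verify, so ``repeating the proof of Theorem~\ref{thm:main2}'' is not yet a defined procedure. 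A repair that stays within the paper's toolkit and avoids circularity: verify \eqref{eq:generating2} directly from the explicit basis \eqref{eq:basis-a+1A} and the norms of Proposition~\ref{prop:GGk2} (whose proof is independent of the corollary), by the same generating-function manipulation used for Proposition~\ref{prop:GGk}, using \eqref{eq:generatingAAA} and $U_j(1)=j+1$; alternatively, prove the contiguity identity you mention. As submitted, however, \eqref{eq:generating2} remains unproven.
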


The identity \eqref{eq:generating1} appeared in \cite{AAA} for the basis defined recursively therein; it
was treated as a generating function of $p_{n}(w_{\l-\f12}^{(k)}; t)$ but was not identified as the 
Poisson kernel. The identity \eqref{eq:generating2} is new; a different generating function is given for 
$p_{n}(w_{\l+\f12}^{(k)}; t)$ in \cite{AAA}, see \eqref{eq:generatingAAA} below. 

\section{Sieved Gegenbauer polynomials}
\setcounter{equation}{0}

Using the connection between $h$-harmonics and orthogonal polynomials in Proposition \ref{prop:Yn-pn},
an explicit basis of orthogonal polynomials with respect to $w_{\l-\f12}$ can be derived from 
Proposition \ref{prop:basis}. An simpler form of the basis can be derived from the relation \eqref{eq:generating1}.

\begin{prop} \label{prop:GGk}
For $k = 2, 3, \ldots$, $n= m k+ j$ with $j =0,1,\ldots, k-1$, a basis of orthogonal polynomial with
respect to $w_{\l-\f12}^{(k)}$ is given by
\begin{align}\label{eq:basis-a-1B}
  p_{k m+j}\big (w_{\l -\f12}^{(k)};\cos \t \big) = \cos(j\t) C_m^{\l+1}(\cos(k\t)) - \cos ((k-j)\t)C_{m-1}^{\l+1}(\cos(k\t)).
\end{align}
Moreover, the $L^2$ norm of these polynomials are given by
\begin{align}\label{eq:basis-a-1normA}
   h_{km} \big (w_{\l -\f12}^{(k)} \big ) = \frac{m + 2 \l}{2(m+\l)} p_{k m}(w_{\l -\f12}^{(k)};1)  
        =  \frac{m + 2 \l}{m+\l} \frac{(2\l+1)_m}{2 \, m!}.
\end{align}
and, for $1\le j \le k-1$,
\begin{align}\label{eq:basis-a-1normB}
    h_{km+j} \big(w_{\l -\f12}^{(k)} \big) = &\, \frac{1}{2}  p_{k m+j}(w_{\l -\f12}^{(k)};1) =  \frac{(2\l+1)_m}{2 \, m!}.
\end{align}
\end{prop}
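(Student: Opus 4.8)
The plan is to read off both the polynomials and their norms directly from the Poisson-kernel identity \eqref{eq:generating1}, rather than verifying orthogonality by hand. Write $t=\cos\t$ throughout, and for $n=km+j$ with $0\le j\le k-1$ let $q_n$ denote the right-hand side of \eqref{eq:basis-a-1B}. First I would check that $q_n$ is a polynomial of degree exactly $n$ in $t$: since $\cos(i\t)=T_i(\cos\t)$ and $C_m^{\l+1}(\cos(k\t))=C_m^{\l+1}(T_k(t))$, the first term $T_j(t)C_m^{\l+1}(T_k(t))$ has degree $j+mk=n$, while the second has degree $(k-j)+(m-1)k<n$ whenever $j\ge1$; for $j=0$ one uses the three-term recurrence for $C_m^{\l+1}$, namely $2(m+\l)\,yC_{m-1}^{\l+1}(y)=mC_m^{\l+1}(y)+(m+2\l)C_{m-2}^{\l+1}(y)$, to rewrite
\[
  q_{km}(\cos\t)=\frac{m+2\l}{2(m+\l)}\bigl(C_m^{\l+1}(\cos(k\t))-C_{m-2}^{\l+1}(\cos(k\t))\bigr),
\]
whose degree in $t$ is $mk$. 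Thus $q_0,q_1,q_2,\dots$ is a sequence of polynomials of the right degrees.

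Next I would expand the right-hand side $\Phi(t,r)$ of \eqref{eq:generating1}. Using the classical expansion $(1-r^2)/(1-2rt+r^2)=\sum_{i\ge0}\ve_iT_i(t)r^i$ with $\ve_0=1$, $\ve_i=2$ for $i\ge1$, together with \eqref{eq:Gegen1} in the form $(1-2r^kT_k(t)+r^{2k})^{-\l}=\sum_{m\ge0}C_m^\l(T_k(t))r^{km}$, the coefficient of $r^n$ in $\Phi(t,r)$, with $n=km+j$, equals $\sum_{l=0}^m\ve_{kl+j}\,T_{kl+j}(t)\,C_{m-l}^\l(T_k(t))$. Substituting $T_{kl+j}(\cos\t)=T_l(\cos(k\t))\cos(j\t)-U_{l-1}(\cos(k\t))\sin(k\t)\sin(j\t)$ and the two convolution identities
\begin{gather*}
  \sum_{l=0}^m T_l(y)C_{m-l}^\l(y)=C_m^{\l+1}(y)-yC_{m-1}^{\l+1}(y),\\
  \sum_{l=1}^m U_{l-1}(y)C_{m-l}^\l(y)=C_{m-1}^{\l+1}(y),
\end{gather*}
each of which follows by multiplying the two relevant generating functions, and then collapsing with $\cos(j\t)\cos(k\t)+\sin(j\t)\sin(k\t)=\cos((k-j)\t)$, this coefficient reduces to $2q_n(t)$ when $1\le j\le k-1$. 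For $j=0$ the only change is that the $l=0$ term carries $\ve_0=1$ rather than $2$, so the coefficient of $r^{km}$ is $2q_{km}(t)-C_m^\l(\cos(k\t))$; here the reduction identity $C_m^\l(y)=\tfrac{\l}{m+\l}\bigl(C_m^{\l+1}(y)-C_{m-2}^{\l+1}(y)\bigr)$ and the displayed formula for $q_{km}$ give $C_m^\l(\cos(k\t))=\tfrac{2\l}{m+2\l}\,q_{km}(\cos\t)$, so the coefficient equals $\tfrac{2(m+\l)}{m+2\l}\,q_{km}(t)$. In every case, then, the coefficient of $r^n$ in $\Phi(t,r)$ is $c_nq_n(t)$, with $c_n=2$ when $k\nmid n$ and $c_{km}=\tfrac{2(m+\l)}{m+2\l}$.

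By \eqref{eq:generating1}, this coefficient also equals $p_n(w_{\l-\f12}^{(k)};t)\,p_n(w_{\l-\f12}^{(k)};1)/h_n(w_{\l-\f12}^{(k)})$, a scalar multiple of the degree-$n$ orthogonal polynomial; since $c_nq_n$ is a nonzero polynomial of degree $n$, $q_n$ must be a nonzero multiple of $p_n(w_{\l-\f12}^{(k)};\cdot)$, which is exactly \eqref{eq:basis-a-1B}. Running the same identity with $q_n$ itself in the role of $p_n$ gives $q_n(1)/h_n(w_{\l-\f12}^{(k)})=c_n$, i.e.\ $h_n(w_{\l-\f12}^{(k)})=q_n(1)/c_n$; this is the first equality in each of \eqref{eq:basis-a-1normA} and \eqref{eq:basis-a-1normB}. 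It remains to evaluate $q_n(1)$. Using $C_m^{\l+1}(1)=(2\l+2)_m/m!$, for $1\le j\le k-1$ we get $q_{km+j}(1)=C_m^{\l+1}(1)-C_{m-1}^{\l+1}(1)=(2\l+1)_m/m!$, and from the displayed form of $q_{km}$ we get $q_{km}(1)=\tfrac{m+2\l}{2(m+\l)}\bigl(C_m^{\l+1}(1)-C_{m-2}^{\l+1}(1)\bigr)=(2\l+1)_m/m!$ as well; substituting these into $h_n=q_n(1)/c_n$ gives the remaining equalities in \eqref{eq:basis-a-1normA} and \eqref{eq:basis-a-1normB}.

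I expect the main obstacle to be the bookkeeping in the second step: reorganizing $\sum_{l=0}^m\ve_{kl+j}T_{kl+j}(t)C_{m-l}^\l(T_k(t))$ into $c_nq_n(t)$, and especially getting the $j=0$ case right, where the $\ve_0=1$ correction forces one to trade $C_m^\l$ for $C_m^{\l+1}$ using the reduction identity and the three-term recurrence. Once the two convolution identities and these standard Gegenbauer identities are set up, everything else is a routine computation.
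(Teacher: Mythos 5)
Your proof is correct and takes essentially the same route as the paper: both arguments rest on the Poisson--kernel identity \eqref{eq:generating1} together with elementary Chebyshev/Gegenbauer generating-function manipulations, matching the coefficient of $r^n$ to identify the basis and then reading off $p_n(1)/h_n$ (equal to $2$ for $k\nmid n$ and $\tfrac{2(m+\l)}{m+2\l}$ for $n=km$) to get the norms. The only difference is one of direction and explicitness --- the paper sums the candidate polynomials against $r^n$ and shows the series reproduces the closed form (delegating the computation to ``direct verification or a computer algebra system''), whereas you expand the closed form and exhibit the convolution identities that produce $c_nq_n(t)$; this is a mirror image of the same computation, not a different method.
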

 
\begin{proof}
Using \eqref{eq:Gegen1}, we can verify directly, or with the help of a computer algebra system, that
\begin{align*}
&  2 \sum_{j=1}^{k-1} \sum_{m=0}^\infty p_{k m+j}(w_{\l -\f12}^{(k)};\cos \t) r^{km +j }\\
  &  =  2 \sum_{m=0}^\infty C_m^{\l+1} (\cos (k \t)) r^{k m} \left( \sum_{j=0}^{k-1} \cos(j\t) r^j 
      -\sum_{j=1}^{k-1} \cos ((k-j)\t) r^{k+j} \right) \\
  & =  \frac{1}{(1+ r^k \cos (k\t) + r^{2k})^{\l+1}} \left( \frac{(1 - r^2) (1 - 2 r^k \cos(k \t) + r^{2k})}{1 - 2 r \cos \t + r^2}
       - (1-r^{2k}) \right). 
\end{align*}
For $k=0$, we use the identity  
\begin{equation} \label{eq:Gegen-recur}
   C_m^{\l+1}(t) - t C_{m-1}^{\l+1}(t) =  \frac{m + 2 \l}{2 \l} C_m^\l(t),
\end{equation}
which can be easily verified by using the ${}_2F_1$ expansion of these polynomials, so that 
\begin{align*}
 \sum_{m=0}^\infty  \frac{2(m + \l)}{m+2 \l}  p_{k m}(w_{\l -\f12}^{(k)};\cos \t) r^{km} &\, =
    \sum_{m=0}^\infty  \frac{ m + \l}{\l}  C_m^\l(\cos k \t) r^{km} \\
       & \, = \frac{1-r^{2k}}{ (1-2  r^k \cos(k \t) + r^{2k})^{\l+1} }
\end{align*}
by \eqref{eq:Gegen2}. Together, we conclude that 
\begin{align*}
   \sum_{m=0}^\infty  \frac{2(m + \l)}{m+2 \l}  p_{k m}(w_{\l -\f12}^{(k)};\cos \t) r^{km} & +
    2 \sum_{j=1}^{k-1} \sum_{m=0}^\infty p_{k m+j}(w_{\l -\f12}^{(k)};\cos \t)r^{km+j} \\
  &     = \frac{1-r^2}{(1-2r \cos \t + r^2)(1-2  r^k \cos(k \t) + r^{2k})^{\l} }.
\end{align*}
Consequently, by \eqref{eq:generating1}, we see that 
$$
 \frac{p_{km}\big(w_{\l-\f12}^{(k)};  1\big)} {h_{km} \big(w_{\l-\f12}^{(k)}\big)} = \frac{2(m + \l)}{m+2 \l} \quad \hbox{and}\quad
 \frac{p_{km+j}\big(w_{\l-\f12}^{(k)};  1\big)} {h_{km+j} \big(w_{\l-\f12}^{(k)}\big)} = 2, \quad j \ge 1,
$$
from which the norm $h_n\big(w_{\l-\f12}^{(k)}\big)$ can be deduced from $C_n^\l(1) = \frac{(2\l)_n}{n!}$. This 
completes the proof. 
\end{proof}

The polynomial $p_n(w_{\l-\f12};t)$ in \eqref{prop:GGk} differs from the orthogonal polynomial $c_n^\l(x;k)$ 
given in \cite{AAA} by a multiple constant $\a_n$ as can be seen from \eqref{eq:generating1} and 
\cite[(4.10)]{AAA}.  In particular, comparing \cite[(4.13)]{AAA} and our expression for $p_{m k +j}$ leads to
the following proposition, which can also be verified directly. 

\begin{prop}
For $1 \le j \le m$ and $m=2,3,\ldots$, 
\begin{align}
  &  \sum_{\ell=0}^m T_{\ell k +j} (\cos\t) C_{m- \ell}^\l (\cos k\t) \\
   & \qquad  = \cos(j\t) C_m^{\l+1}(\cos(k\t)) - \cos ((k-j)\t)C_{m-1}^{\l+1}(\cos(k\t)). \notag
\end{align}
\end{prop}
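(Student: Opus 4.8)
The plan is to obtain the identity by comparing coefficients of $r^{km+j}$ in a generating‑function identity, in the spirit of the proof of Proposition~\ref{prop:GGk}. By \eqref{eq:basis-a-1B} the right‑hand side of the asserted identity is exactly $p_{km+j}\big(w_{\l-\f12}^{(k)};\cos\t\big)$, so it is enough to show that, for $0\le r<1$ and each fixed $j\ge1$,
$$
\sum_{m=0}^\infty\Bigl(\sum_{\ell=0}^m T_{\ell k+j}(\cos\t)\,C_{m-\ell}^\l(\cos k\t)\Bigr)r^{km+j}
=\frac{r^j\cos(j\t)-r^{j+k}\cos((k-j)\t)}{\bigl(1-2r^k\cos k\t+r^{2k}\bigr)^{\l+1}} .
$$
Indeed, expanding the right‑hand side by \eqref{eq:Gegen1} with $x=\cos k\t$ and $r$ replaced by $r^k$, then reindexing the second term by $m\mapsto m-1$ (with the convention $C_{-1}^{\l+1}=0$), produces precisely $\sum_{m\ge0}\bigl(\cos(j\t)C_m^{\l+1}(\cos k\t)-\cos((k-j)\t)C_{m-1}^{\l+1}(\cos k\t)\bigr)r^{km+j}$, and matching the coefficient of $r^{km+j}$ yields the claim.

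To prove the displayed generating identity I would read its left‑hand side as a Cauchy product,
$$
\Bigl(\sum_{\ell\ge0}T_{\ell k+j}(\cos\t)\,r^{k\ell+j}\Bigr)\Bigl(\sum_{s\ge0}C_s^\l(\cos k\t)\,r^{ks}\Bigr),
$$
whose second factor equals $\bigl(1-2r^k\cos k\t+r^{2k}\bigr)^{-\l}$ by \eqref{eq:Gegen1}. For the first factor I would put $w=re^{\i\t}$, so that $2\,T_{\ell k+j}(\cos\t)\,r^{\ell k+j}=w^{\ell k+j}+\bar w^{\ell k+j}$ with $\bar w=re^{-\i\t}$, and sum the two geometric series to get $\tfrac12\bigl(\tfrac{w^j}{1-w^k}+\tfrac{\bar w^j}{1-\bar w^k}\bigr)$; these converge since $|w^k|=r^k<1$.

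It then remains to simplify the product. Since $w^k\bar w^k=r^{2k}$ and $w^k+\bar w^k=2r^k\cos k\t$, one has $(1-w^k)(1-\bar w^k)=1-2r^k\cos k\t+r^{2k}$, while
$$
w^j(1-\bar w^k)+\bar w^j(1-w^k)=(w^j+\bar w^j)-\bigl(w^j\bar w^k+\bar w^j w^k\bigr)=2r^j\cos(j\t)-2r^{j+k}\cos((k-j)\t),
$$
using $w^j+\bar w^j=2r^j\cos(j\t)$ and $w^j\bar w^k+\bar w^j w^k=2r^{j+k}\cos((k-j)\t)$. Dividing by $(1-w^k)(1-\bar w^k)$ and then multiplying by the second factor gives exactly the asserted closed form, and the proof is finished. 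Alternatively, the identity drops out at once by combining \cite[(4.13)]{AAA} with Proposition~\ref{prop:GGk}.

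I do not expect a genuine obstacle here; the computation is entirely routine. The only points that call for a little care are the trigonometric bookkeeping in the numerator—distinguishing $e^{\pm\i(j-k)\t}$ from $e^{\pm\i(k-j)\t}$, which agree after taking real parts—and indexing both sides uniformly from $m=0$ by adopting $C_{-1}^{\l+1}=0$.
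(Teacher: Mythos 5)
Your argument is correct, and it is a genuinely different route from the paper's. The paper obtains the identity purely by comparison: \cite[(4.13)]{AAA} expresses the sieved polynomial $p_{mk+j}\big(w_{\l-\f12}^{(k)};\cos\t\big)$ as the sum $\sum_{\ell=0}^m T_{\ell k+j}(\cos\t)C_{m-\ell}^\l(\cos k\t)$, while \eqref{eq:basis-a-1B} gives the two-term Gegenbauer expression, and equating the two (after checking that the normalizations agree) yields the proposition; the paper only remarks that the identity ``can also be verified directly'' without carrying this out. You supply exactly that direct verification: the left-hand side, summed against $r^{km+j}$, is a Cauchy product whose first factor $\sum_{\ell\ge0}T_{\ell k+j}(\cos\t)r^{\ell k+j}=\tfrac12\big(\tfrac{w^j}{1-w^k}+\tfrac{\bar w^j}{1-\bar w^k}\big)$ with $w=re^{\i\t}$ collapses to $\big(r^j\cos(j\t)-r^{j+k}\cos((k-j)\t)\big)/(1-2r^k\cos k\t+r^{2k})$, and whose second factor is $(1-2r^k\cos k\t+r^{2k})^{-\l}$ by \eqref{eq:Gegen1}; expanding the resulting closed form by \eqref{eq:Gegen1} at exponent $\l+1$ and matching coefficients of $r^{km+j}$ (with $C_{-1}^{\l+1}=0$) gives the claim. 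All the intermediate computations check out, including the numerator identity $w^j\bar w^k+\bar w^jw^k=2r^{j+k}\cos((k-j)\t)$ and the legitimacy of equating coefficients of the lacunary powers $r^{km+j}$ for fixed $j$. What your approach buys is self-containedness: it does not depend on the external formula from \cite{AAA} nor on verifying that its normalization matches that of Proposition~\ref{prop:GGk}, and as a byproduct it shows the identity holds for all $m\ge0$, not just $m\ge2$. (Your closing sentence notes the paper's comparison route as an alternative, so you have both arguments.)
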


We next consider orthogonal polynomials with respect to $w_{\l+\f12}$. These polynomials are shown in \cite{AAA}
to satisfy a generating function (\cite[(4.3)]{AAA}), 
\begin{equation} \label{eq:generatingAAA}
  \sum_{n=0}^\infty p_n\big(w_{\l + \f12};t \big) r^n =
       \frac{1}{(1-2 \cos \t r + r^2) (1- 2 T_k(\cos \t) r^k + r^{2k})^\l }.
\end{equation}
It turns out that they can be given by analogues of those in Proposition \ref{prop:GGk}, replacing 
some of the cosines by sines. We shall define  $U_{-1}(t) = 0$.

\begin{prop} \label{prop:GGk2}
For $k = 2, 3, \ldots$, $n= m k+ j$ with $j =0,1,\ldots, k-1$, a basis of orthogonal polynomial with
respect to $w_{\l+\f12}^{(k)}$ is given by, for  $0 \le j \le k-1$,
\begin{align}\label{eq:basis-a+1A}
  p_{k m+j}(w_{\l +\f12}^{(k)};\cos \t) = U_j(\cos\t) C_m^{\l+1}(\cos(k\t)) + U_{k-j-2}(\cos \t)C_{m-1}^{\l+1}(\cos(k\t)).
\end{align}
Moreover, the $L^2$ norm of these polynomials are given by, for $0 \le j \le k-2$,
\begin{align}\label{eq:basis-a+1normA}
    h_{km+j}(w_{\l +\f12}^{(k)})= \, \frac{(2\l+1)_m}{2 m!},
\end{align}
and, for $j = k-1$, 
\begin{align}\label{eq:basis-a+1normB}
    h_{km+k-1}(w_{\l +\f12}^{(k)}) = \, \frac{(2\l+1)_m}{2 m!} \frac{2\l+m+1}{\l + m + 1}.
\end{align}
\end{prop}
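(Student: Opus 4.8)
The plan is to mimic the proof of Proposition \ref{prop:GGk}, working with generating functions built from \eqref{eq:generating2}, which identifies
$$
\sum_{n=0}^\infty \frac{p_n(w_{\l+\f12}^{(k)};\cos\t)\,p_n(w_{\l+\f12}^{(k)};1)}{h_n(w_{\l+\f12}^{(k)})}\,r^n
 = \frac{1-r^2}{(1-2r\cos\t+r^2)^2(1-2r^kT_k(\cos\t)+r^{2k})^\l}.
$$
First I would verify the orthogonality of the polynomials $p_{km+j}(w_{\l+\f12}^{(k)};\cos\t)$ defined by \eqref{eq:basis-a+1A}: since $U_j(\cos\t)=\sin((j+1)\t)/\sin\t$, the claimed polynomial is $1/\sin\t$ times $\sin((j+1)\t)C_m^{\l+1}(\cos k\t)+\sin((k-j-1)\t)C_{m-1}^{\l+1}(\cos k\t)$, and the weight $w_{\l+\f12}^{(k)}(\cos\t)=|U_{k-1}(\cos\t)|^{2\l}(\sin\t)^{2\l+1}$ carries a factor $(\sin\t)^{2\l+1}$; so testing against lower-degree polynomials reduces, after the substitution $t=\cos\t$, to integrals of trigonometric polynomials against $|\sin k\t|^{2\l}(\sin\t)^2\,d\t$, where degree-counting and the parity/periodicity in $\t$ give the required vanishing. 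The degree count $n=mk+j$ follows from $\deg U_j=j$, $\deg U_{k-j-2}=k-j-2$ together with $\deg C_m^{\l+1}(\cos k\t)=mk$, $\deg C_{m-1}^{\l+1}(\cos k\t)=(m-1)k$.

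Next I would compute the generating function of the right-hand sides. Using the Gegenbauer generating function \eqref{eq:Gegen1} with index $\l+1$ applied to $\cos k\t$, one forms
$$
\sum_{m=0}^\infty\sum_{j=0}^{k-1}\bigl(U_j(\cos\t)C_m^{\l+1}(\cos k\t)+U_{k-j-2}(\cos\t)C_{m-1}^{\l+1}(\cos k\t)\bigr)r^{km+j},
$$
split the inner sum over $j$, and evaluate $\sum_{j=0}^{k-1}U_j(\cos\t)r^j$ and $\sum_{j=0}^{k-1}U_{k-j-2}(\cos\t)r^j$ in closed form via $U_j(\cos\t)=\sin((j+1)\t)/\sin\t$ and a geometric-series summation; this is the analogue of the cosine computation in Proposition \ref{prop:GGk} and should collapse, after combining with $(1-2r^kT_k(\cos\t)+r^{2k})^{-(\l+1)}$, to $\frac{1-r^2}{(1-2r\cos\t+r^2)^2(1-2r^kT_k(\cos\t)+r^{2k})^\l}$ possibly after separating out the $j=k-1$ term (which behaves differently, cf.\ the norm \eqref{eq:basis-a+1normB} versus \eqref{eq:basis-a+1normA}), where for the $j=k-1$ piece I would use the identity \eqref{eq:Gegen-recur} relating $C_m^{\l+1}$ and $C_{m-1}^{\l+1}$ to $C_m^\l$, exactly as in Proposition \ref{prop:GGk}. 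Matching coefficients of $r^n$ with \eqref{eq:generating2} then yields $p_{km+j}(w_{\l+\f12}^{(k)};1)/h_{km+j}(w_{\l+\f12}^{(k)})$, and since $U_j(1)=j+1$, one gets $p_{km+j}(w_{\l+\f12}^{(k)};1)=(j+1)C_m^{\l+1}(1)+(k-j-1)C_{m-1}^{\l+1}(1)$, from which \eqref{eq:basis-a+1normA}--\eqref{eq:basis-a+1normB} follow using $C_m^{\l+1}(1)=\frac{(2\l+2)_m}{m!}$ after simplification.

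The main obstacle I anticipate is the bookkeeping in summing $\sum_{j=0}^{k-1}U_j(\cos\t)r^j$ and the shifted companion sum and showing that the extra boundary terms (the $j=k-1$ contribution and the $C_{m-1}$ tail at $m=0$, where $U_{-1}=0$ is used) assemble precisely into the claimed rational function without residual cross terms; this is where Proposition \ref{prop:GGk} needed the recurrence \eqref{eq:Gegen-recur}, and the sine version here will need the same trick, with the telescoping slightly more delicate because $U_{k-j-2}$ runs "backwards." Verifying orthogonality directly (rather than inheriting it from the $h$-harmonic picture) is routine but tedious; alternatively one can argue that $r^n x_2\,p_{n-1}(w_{\l+\f12}^{(k)};x_1)$ must be the harmonic $Y_{n,2}^{\l,k}$ by Proposition \ref{prop:Yn-pn} and uniqueness, reducing the whole proposition to the generating-function identity above.
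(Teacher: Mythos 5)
Your overall strategy (sum the proposed right--hand sides against $r^{km+j}$, evaluate the $U_j$--sums in closed form, multiply by the Gegenbauer generating function for $C_m^{\l+1}(\cos k\t)$, and read off the norms from an evaluation at $t=1$) is close in spirit to the paper's, but the central identification is off, and the misstep matters. The unweighted sum
\begin{equation*}
\sum_{m\ge0}\sum_{j=0}^{k-1}\Big(U_j(\cos\t)C_m^{\l+1}(\cos k\t)+U_{k-j-2}(\cos\t)C_{m-1}^{\l+1}(\cos k\t)\Big)r^{km+j}
\end{equation*}
does \emph{not} collapse to the Poisson kernel $\frac{1-r^2}{(1-2r\cos\t+r^2)^2(1-2r^kT_k(\cos\t)+r^{2k})^{\l}}$ of \eqref{eq:generating2}; it collapses to $\frac{1}{(1-2r\cos\t+r^2)(1-2r^kT_k(\cos\t)+r^{2k})^{\l}}$, i.e.\ to the generating function \eqref{eq:generatingAAA}. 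Indeed the two $U$--sums combine (after reindexing, with $U_{-1}=0$) into $\frac{1-2r^k\cos(k\t)+r^{2k}}{1-2r\cos\t+r^2}$ --- this is the one identity the paper actually verifies --- and multiplying by $\sum_m C_m^{\l+1}(\cos k\t)r^{km}=(1-2r^k\cos(k\t)+r^{2k})^{-(\l+1)}$ gives \eqref{eq:generatingAAA} on the nose. That is precisely the paper's proof: since a generating function $\sum_n p_n(t)r^n$ determines each $p_n$ uniquely and \eqref{eq:generatingAAA} is known from \cite{AAA} to generate the orthogonal polynomials for $w_{\l+\f12}^{(k)}$, formula \eqref{eq:basis-a+1A} follows with no separate orthogonality check, and the norms \eqref{eq:basis-a+1normA}--\eqref{eq:basis-a+1normB} are simply quoted from \cite[(3.4)]{AAA} rather than rederived.

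If you insist on matching against \eqref{eq:generating2}, note that the required coefficients $p_{km+j}(w_{\l+\f12}^{(k)};1)/h_{km+j}(w_{\l+\f12}^{(k)})$ work out to $2(j+1)+\frac{2km}{2\l+1}$ for $0\le j\le k-2$ and $\frac{2k(\l+m+1)}{2\l+1}$ for $j=k-1$: they grow linearly in $m$ for \emph{every} residue class $j$, not only for the boundary index $j=k-1$. So, unlike Proposition \ref{prop:GGk} (where the coefficient is the constant $2$ for all $j\ge1$ and only $j=0$ is exceptional), the weighted sum cannot be obtained from the unweighted one by adjusting a single residue class via \eqref{eq:Gegen-recur}; it forces you to handle $\sum_m mC_m^{\l+1}(\cos k\t)r^{km}$ (an $r\partial_r$ of the Gegenbauer generating function) together with $\sum_j(j+1)U_j(\cos\t)r^j$. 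This is repairable but substantially heavier than your sketch anticipates, and you would also have to establish orthogonality independently before invoking \eqref{eq:generating2} at all (your fallback via Proposition \ref{prop:Yn-pn} and the uniqueness of $Y_{n,2}^{\l,k}$ does suffice for that). The short route is to recognize that the generating function your computation actually produces is \eqref{eq:generatingAAA}, not \eqref{eq:generating2}.
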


\begin{proof}
Just as in the previous proof, we can easily verify that 
$$
 \sum_{j=0}^{k-1} \left(U_{j-1}(\cos \t) r^{j-1} +  U_{k-j-1}(\cos \t) r^{k+j-1} \right)
      = \frac{1-2 r^k \cos (k \t) + r^{2k}}{1-2 r \cos \t + r^2}. 
$$
Together with \eqref{eq:Gegen1} applied to $C_n^{\l+1}(\cos (k\t))$,  we can then verify that the polynomials
$p_{k m+j}(w_{\l +\f12}^{(k)})$ defined in \eqref{eq:basis-a+1A} satisfy the generating function 
\eqref{eq:generatingAAA}. The $L^2$ normal of these polynomials are given in \cite[(3.4)]{AAA}. 
\end{proof}

In particular, comparing \cite[(4.4)]{AAA} and our expression for $p_{m k +j}$ leads to
the following proposition, which can also be verified directly. 

\begin{prop}
For $1 \le j \le m$ and $m=2,3,\ldots$, 
\begin{align}
  &  \sum_{\ell=0}^m U_{\ell k +j} (\cos\t) C_{m- \ell}^\l (\cos k\t) \\
   & \qquad  = U_j(\cos\t) C_m^{\l+1}(\cos(k\t)) - U_{k-j-2}(\cos\t)C_{m-1}^{\l+1}(\cos(k\t)). \notag
\end{align}
\end{prop}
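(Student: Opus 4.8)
The identity to be proved is
\[
   \sum_{\ell=0}^m U_{\ell k +j} (\cos\t)\, C_{m- \ell}^\l (\cos k\t)
     = U_j(\cos\t)\, C_m^{\l+1}(\cos(k\t)) - U_{k-j-2}(\cos\t)\,C_{m-1}^{\l+1}(\cos(k\t)),
\]
for $1\le j\le m$. The natural route is to package both sides into generating functions in an auxiliary variable $r$ and check they agree coefficient-by-coefficient, which is exactly the strategy already used in the proofs of Propositions \ref{prop:GGk} and \ref{prop:GGk2}. Concretely, the right-hand side is (up to the normalization constant $\a_n$ coming from the comparison of \eqref{eq:basis-a+1A} with \cite[(4.4)]{AAA}) just $p_{km+j}(w_{\l+\f12}^{(k)};\cos\t)$, whose generating function \eqref{eq:generatingAAA} is known; and the left-hand side, \cite[(4.4)]{AAA} asserts, is the same polynomial expressed in the mixed Chebyshev/Gegenbauer basis. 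So the cleanest presentation is simply to invoke \eqref{eq:basis-a+1A} together with \cite[(4.4)]{AAA}: both expressions represent $p_{km+j}(w_{\l+\f12}^{(k)};\cos\t)$, hence they are equal. That is a two-line proof.

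For the "can also be verified directly'' clause, I would instead give a self-contained generating-function argument. Multiply the claimed left-hand side by $r^{km+j}$ and sum over $m\ge 0$ (with $j$ fixed, $1\le j\le k-1$), and likewise for the right-hand side; one gets
\[
   \sum_{m\ge0}\Bigl(\sum_{\ell=0}^m U_{\ell k+j}(\cos\t)\,C_{m-\ell}^\l(\cos k\t)\Bigr) r^{km+j}
     = r^j\Bigl(\sum_{i\ge0} U_{ik+j}(\cos\t)\,r^{ik}\Bigr)\Bigl(\sum_{n\ge0}C_n^\l(\cos k\t)\,r^{kn}\Bigr),
\]
a Cauchy product. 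The second factor is $(1-2r^k\cos k\t+r^{2k})^{-\l}$ by \eqref{eq:Gegen1}. For the first factor I would use the standard Chebyshev generating function $\sum_{n\ge0}U_n(\cos\t)r^n=(1-2r\cos\t+r^2)^{-1}$, extract the arithmetic progression $n\equiv j\pmod k$ via roots of unity, and simplify using Lemma \ref{lem:trig-identity}; this should collapse to $r^j(1-2r\cos\t+r^2)^{-1}$ times a polynomial factor. The right-hand side's generating function is handled by the identity
\[
   \sum_{j=0}^{k-1}\bigl(U_{j}(\cos\t)\,r^{j} + U_{k-j-2}(\cos\t)\,r^{k+j}\bigr)
        = \frac{1-2r^k\cos(k\t)+r^{2k}}{1-2r\cos\t+r^2}
\]
(a shift of the one already established inside the proof of Proposition \ref{prop:GGk2}) multiplied by $\sum_n C_n^{\l+1}(\cos k\t)r^{kn}=(1-2r^k\cos k\t+r^{2k})^{-\l-1}$. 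Both sides then reduce to $(1-2r^k\cos k\t+r^{2k})^{-\l}/(1-2r\cos\t+r^2)$ with the appropriate $r^j$-progression extracted, and matching the $r^{km+j}$ coefficients gives the claim.

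**Main obstacle.** The genuine bookkeeping difficulty is the roots-of-unity extraction of the residue class $n\equiv j\pmod k$ from $\sum_n U_n(\cos\t)r^n$: one must verify that
\[
   \frac1k\sum_{\nu=0}^{k-1}\frac{\o^{-\nu j}}{1-2r\o^\nu\cos\t+r^2\o^{2\nu}},\qquad \o=e^{2\pi\i/k},
\]
simplifies, after clearing the product $\prod_\nu(1-2r\o^\nu\cos\t+r^2\o^{2\nu})=1-2r^k\cos k\t+r^{2k}$ from Lemma \ref{lem:trig-identity}, to the expected closed form. This is elementary but fiddly; it is the same computation that lies behind the "easily verified'' generating-function identities in the proofs of Propositions \ref{prop:GGk} and \ref{prop:GGk2}, so in practice I would simply cite the comparison with \cite[(4.4)]{AAA} for the short proof and leave the direct verification as a parallel of those earlier computations.
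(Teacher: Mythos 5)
Your argument coincides with the paper's own: the proposition is obtained exactly by comparing \cite[(4.4)]{AAA} with the expression \eqref{eq:basis-a+1A} for $p_{km+j}\big(w_{\l+\f12}^{(k)};\cos\t\big)$ — both sides being the coefficient of $r^{km+j}$ in the generating function \eqref{eq:generatingAAA} — and your sketched generating-function computation is precisely the ``can also be verified directly'' route the paper alludes to but does not write out. One caveat you should flag: the right-hand side of the proposition as printed has a minus sign on $U_{k-j-2}(\cos\t)\,C_{m-1}^{\l+1}(\cos(k\t))$, whereas \eqref{eq:basis-a+1A} — and a direct check, e.g.\ $k=3$, $j=1$, $m=1$, where $U_4(\cos\t)=4\cos\t\cos 3\t+1$ — gives a plus sign, so the identity your comparison actually establishes is the (correct) plus-sign version.
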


\section{Two related families of orthogonal polynomials}
\setcounter{equation}{0}

In the case when $k$ is an even integer, we can relate orthogonal polynomials with respect to $w_{\l+\f12}$ 
to another set of orthogonal polynomials associated to the weight function
\begin{equation} \label{eq:wlk1}
  w_{\l-\f12}^{(k),1}(t):= (1-t) w_{\l-\f12}^{(k)}(t) = (1-t) (1-t^2)^{\l-\f12}|U_{k-1}(t)|^{2\l}.
\end{equation}
The latter can also be related to orthogonal polynomials associate to the weight function 
\begin{equation} \label{eq:wlk-1}
  w_{\l-\f12}^{(k), -1}(t):= (1+ t) w_{\l-\f12}^{(k)}(t) = (1+t) (1-t^2)^{\l-\f12}|U_{k-1}(t)|^{2\l}.
\end{equation}

\begin{prop}\label{prop:OPk,1}
For $k = 1,2,3,\ldots$, $n=0,1,2,\ldots$, 
\begin{align} \label{eq:OPk,1}
\begin{split}
  p_{2n}\big(w_{\l-\f12}^{(2k)}; \cos \t \big) &\, = p_{n}\big(w_{\l-\f12}^{(k)}; \cos (2\t) \big), \\
  p_{2n+1}\big(w_{\l-\f12}^{(2k)}; \cos\t \big) &\, = \cos \t \,  p_{n}\big(w_{\l-\f12}^{(k),1}; \cos (2\t) \big). 
\end{split}
\end{align}
Furthermore, the norms of these polynomials in their respective $L^2$ space satisfy
\begin{equation} \label{eq:OPk,1norm}
   h_{2n}\big(w_{\l-\f12}^{(2k)}\big) = h_{n}\big(w_{\l-\f12}^{(k)}\big) \quad \hbox{and}\quad 
   h_{2n+1}\big(w_{\l-\f12}^{(2k)}\big) = \f12 h_{n}\big(w_{\l-\f12}^{(k),1}\big).
\end{equation}
\end{prop}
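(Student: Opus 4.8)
The plan is to reduce the claimed identities to the orthogonality-preserving substitution $t = \cos\t \mapsto \cos(2\t) =: s$, exploiting the parity of the weight $w_{\l-\f12}^{(2k)}$. First I would observe that $U_{2k-1}(\cos\t) = \frac{\sin(2k\t)}{\sin\t}$ and $U_{k-1}(\cos 2\t) = \frac{\sin(2k\t)}{\sin 2\t}$, so that $|U_{2k-1}(\cos\t)|^{2\l} = |2\cos\t|^{2\l}\,|U_{k-1}(\cos 2\t)|^{2\l}$; combined with $(1-\cos^2\t)^{\l-\f12} = (\sin\t)^{2\l-1}$ and the change of variables identity $(1-\cos^2 2\t)^{\l-\f12}\,|\dd(\cos 2\t)| = 2^{2\l}(\sin\t)^{2\l}\,(\cos\t)$-type bookkeeping, this shows that the measure $w_{\l-\f12}^{(2k)}(\cos\t)\,\dd\t$ on $[0,\pi]$ pushes forward, under $\t\mapsto 2\t$ restricted to even/odd parity sectors, to the measures $w_{\l-\f12}^{(k)}(\cos 2\t)\,\dd(2\t)$ and $(\cos\t)^2 w_{\l-\f12}^{(k),1}(\cos 2\t)\,\dd(2\t)$ respectively. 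This is the computational heart of the argument and, I expect, the main obstacle: one must track the Jacobian and the absolute-value factors carefully, keeping in mind that $\cos\t$ changes sign at $\t=\pi/2$ so that the factor $(1-\cos 2\t) = 2\sin^2\t$ versus $(1+\cos 2\t)=2\cos^2\t$ is what distinguishes the two sub-cases.

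Next I would verify the degree count and the parity structure. Since $w_{\l-\f12}^{(2k)}$ is even in $t$, the orthogonal polynomial $p_n\big(w_{\l-\f12}^{(2k)};t\big)$ has the parity of $n$; hence $p_{2n}$ is a polynomial of degree $n$ in $t^2$, equivalently (writing $t=\cos\t$ so $t^2 = \tfrac{1+\cos 2\t}{2}$) a polynomial of degree $n$ in $\cos 2\t$, and $p_{2n+1}$ is $t = \cos\t$ times a polynomial of degree $n$ in $\cos 2\t$. Thus the right-hand sides of \eqref{eq:OPk,1} are, modulo normalization, the unique candidates: I would define $q_n(s) := p_{2n}\big(w_{\l-\f12}^{(2k)};\cos\t\big)$ with $s=\cos 2\t$ and $\tilde q_n(s) := p_{2n+1}\big(w_{\l-\f12}^{(2k)};\cos\t\big)/\cos\t$, check each is a genuine polynomial of degree $n$ in $s$, and then show $\{q_n\}$ is orthogonal with respect to $w_{\l-\f12}^{(k)}$ and $\{\tilde q_n\}$ with respect to $w_{\l-\f12}^{(k),1}$ by substituting into the defining integrals and applying the pushforward computation from the first paragraph. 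Orthogonality plus correct degree plus matching leading coefficient (or whatever normalization convention is in force for the monic/standardized $p_n$) then forces equality by uniqueness of orthogonal polynomials.

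For the even case: $\int_{-1}^1 p_{2n}p_{2m}\,w_{\l-\f12}^{(2k)}\,\dd t = 2\int_0^{\pi/2} q_n(\cos 2\t)q_m(\cos 2\t)\,w_{\l-\f12}^{(2k)}(\cos\t)\sin\t\,\dd\t$, and the pushforward turns this into a constant multiple of $\int_{-1}^1 q_n(s)q_m(s)\,w_{\l-\f12}^{(k)}(s)\,\dd s$, giving orthogonality of $\{q_n\}$; the odd case is identical except that the extra $\cos^2\t$ from $p_{2n+1}p_{2m+1}/\!\big((\cos\t)\cdot(\cos\t)\big)$-cancellation... wait, rather the extra $(\cos\t)^2$ sitting in front combines with the weight to produce the factor $(1+s)$, i.e.\ $w_{\l-\f12}^{(k),1}$. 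Finally the norm relations \eqref{eq:OPk,1norm} come out of the same identity by taking $n=m$: the even case gives an exact equality of integrals after the substitution (hence $h_{2n} = h_n$), and the odd case carries the single multiplicative constant $\f12$ coming from the factor of $2$ produced when folding $[0,\pi]$ onto $[0,\pi/2]$ together with the $2^{2\l}$-type constants — which I would double-check cancel against the normalization constants $b_{\l\mp\f12}$ exactly as in the proof of Proposition~\ref{prop:Yn-pn}. The only real care needed is consistency of the normalization convention for $p_n$ throughout, and the sign/absolute-value bookkeeping flagged above.
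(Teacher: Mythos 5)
Your proposal is correct and follows essentially the same route as the paper: use the evenness of $w_{\l-\f12}^{(2k)}$ to write $p_{2n}=q_{n,1}(2t^2-1)$ and $p_{2n+1}=t\,q_{n,2}(2t^2-1)$, then perform the quadratic substitution $\t\mapsto 2\t$ (equivalently $|\sin(2k\t)|^{2\l}\,\dd\t \to |\sin(k\phi)|^{2\l}\,\tfrac{\dd\phi}{2}$ with a fold of $[\pi,2\pi]$ onto $[0,\pi]$), with the extra $\cos^2\t=(1+\cos 2\t)/2$ producing the modified weight and the factor $\f12$ in the norm. Your computation of the resulting factor $(1+s)$ agrees with the paper's own derivation, and your remark about fixing the normalization convention to invoke uniqueness of orthogonal polynomials is a reasonable point the paper leaves implicit.
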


\begin{proof}
Since the weight function $w_{\l-\f12}^{(2k)}(t)$ is even,  $p_{2n}\big(w_{\l-\f12}^{(2k)}\big)$ is even in $t$, 
so that $p_{2n}\big(w_{\l-\f12}^{(2k)}; t\big) = q_{n,1}(2 t^2 -1)$, where $q_{n,1}$ is a polynomial of degree $n$, 
and $p_{2n+1}\big(w_{\l-\f12}^{(2k)}\big)$ is odd, so that $p_{2n+1}\big(w_{\l-\f12}^{(2k)};t\big) = t q_{n,2}(2 t^2-1)$,
where $q_{n,2}$ is also a polynomial of degree $n$. Using $\cos^2\t = (1 + \cos 2 \t)/2$, we obtain 
\begin{align*}
 b_{\l-\f12} & \int_{0}^\pi p_{2n+1}\big(w_{\l-\f12}^{(2k)}; \cos \t\big) p_{2m+1}\big(w_{\l-\f12}^{(2k)}; \cos\t\big) 
   |\sin (2k \t)|^{2 \l} \dd \t \\
&  =  b_{\l-\f12} \int_{0}^\pi   (\cos \t)^2 q_{n,2}(\cos 2 \t) q_{m,2}(\cos 2 \t)  |\sin (2k \t)|^{2 \l} \dd \t  \\
&  =  b_{\l-\f12} \int_{0}^{2\pi}  \frac{1+\cos \t}{2} q_{n,2}(\cos \t) q_{m,2}(\cos \t)  |\sin (k \t)|^{2 \l} \frac{\dd \t}{2}  \\
& =   \frac{b_{\l-\f12}}{2} \int_{0}^{\pi} q_{n,2}(\cos \t) q_{m,2}(\cos \t) (1+ \cos \t)  |\sin (k \t)|^{2 \l} \dd \t,
\end{align*}
where we have changed $\t \to 2 \pi - \t$ in the integral over $[\pi, 2 \pi]$. This establishes the orthogonality of
$q_{n,2}$ with resect to the weight function $w_{\l-\f12}^{(k),1}$ as well as the relation on $L^2$ norm, since the
normalization constant for $w_{\l-\f12}^{(k),1}$ is the same as the one for $w_{\l-\f12}^{(k)}$. The verification for 
$q_{n,1}$ is similar and easier. 
\end{proof}

The relation \eqref{eq:OPk,1} allows us to derive an orthogonal basis for $w_{\l-\f12}^{(k),1}$, from which an
orthogonal basis for $w_{\l-\f12}^{(k),-1}$ also follows. 

\begin{prop} \label{prop:OPk,1basis}
For $k=1,2,3,\ldots$, $m=0, 1,2,\ldots$ and $j =0,1,\ldots, k-1$,
\begin{align} \label{eq:OPk,1basis}
  p_{k m +j} & \big(w_{\l-\f12}^{(k),1}; \cos\t \big)= \\
    &  \frac{\cos ((j+\f12) \t)}{\cos (\frac{\t}2)} C_m^{\l+1}(\cos(k \t)) -
        \frac{\cos ((k-j+\f12) \t)}{\cos (\frac{\t}2)} C_{m-1}^{\l+1}(\cos(k \t)) \notag
\end{align}
and 
\begin{align}  \label{eq:OPk,-1basis}
  p_{k m +j} &\big(w_{\l-\f12}^{(k),-1}; \cos\t \big) = (-1)^{k m+j} \\
    & \times \left[ \frac{\sin((j+\f12) \t)}{\sin(\frac{\t}2)} C_m^{\l+1}(\cos(k \t)) + 
        \frac{\sin ((k-j+\f12) \t)}{\sin (\frac{\t}2)} C_{m-1}^{\l+1}(\cos(k \t)) \right]. \notag
\end{align}
Furthermore, $h_n\big(w_\l^{(k),1} \big) = h_n\big(w_\l^{(k),-1} \big)$ for all $n=0,1,2,\ldots$. 
\end{prop}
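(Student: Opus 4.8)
The plan is to derive both basis formulas from the explicit sieved Gegenbauer expansion of Proposition~\ref{prop:GGk} via the quadratic relations of Proposition~\ref{prop:OPk,1}, and to obtain the $w_{\l-\f12}^{(k),-1}$ basis and the norm identity from the reflection $t\mapsto -t$.

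For \eqref{eq:OPk,1basis} I would write $n=km+j$ with $0\le j\le k-1$, so that $2n+1=(2k)m+(2j+1)$ with $1\le 2j+1\le 2k-1$. Applying Proposition~\ref{prop:GGk} with $k$ replaced by $2k$ expresses $p_{2n+1}\big(w_{\l-\f12}^{(2k)};\cos\phi\big)$ as a combination of $\cos((2j+1)\phi)$ and $\cos((2k-2j-1)\phi)$ against $C_m^{\l+1}(\cos 2k\phi)$ and $C_{m-1}^{\l+1}(\cos 2k\phi)$. By the second identity of Proposition~\ref{prop:OPk,1} this equals $\cos\phi\,p_n\big(w_{\l-\f12}^{(k),1};\cos 2\phi\big)$, so dividing by $\cos\phi$ and setting $\phi=\t/2$ produces \eqref{eq:OPk,1basis}. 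Two observations legitimize this: $\cos\big((\ell+\tfrac12)\t\big)/\cos(\tfrac\t2)=T_\ell(\cos\t)-(1-\cos\t)\,U_{\ell-1}(\cos\t)$ is a polynomial of degree $\ell$ in $\cos\t$, so the right-hand side of \eqref{eq:OPk,1basis} is genuinely a polynomial; and a glance at leading terms shows its top-degree term is contributed by the first summand alone, so the polynomial has degree exactly $n$ and is therefore $p_n\big(w_{\l-\f12}^{(k),1};\cdot\big)$, not merely a polynomial orthogonal to all lower degrees.

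For \eqref{eq:OPk,-1basis} I would use $U_{k-1}(-t)^2=U_{k-1}(t)^2$, so that $w_{\l-\f12}^{(k)}$ is even and $w_{\l-\f12}^{(k),-1}(t)=(1+t)\,w_{\l-\f12}^{(k)}(t)=w_{\l-\f12}^{(k),1}(-t)$; hence $p_n\big(w_{\l-\f12}^{(k),-1};\cos\t\big)$ coincides, up to normalization, with $p_n\big(w_{\l-\f12}^{(k),1};\cos(\pi-\t)\big)$. Substituting $\cos(\pi-\t)=-\cos\t$ into \eqref{eq:OPk,1basis} and simplifying with $\cos\big((j+\tfrac12)(\pi-\t)\big)=(-1)^j\sin\big((j+\tfrac12)\t\big)$, $\cos(\tfrac{\pi-\t}{2})=\sin(\tfrac\t2)$, $\cos\big(k(\pi-\t)\big)=(-1)^k\cos(k\t)$ and $C_m^{\l+1}\big((-1)^k x\big)=(-1)^{km}C_m^{\l+1}(x)$, then collecting signs, produces \eqref{eq:OPk,-1basis}, the prefactor $(-1)^{km+j}$ emerging from this simplification. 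The norm equality $h_n\big(w_{\l-\f12}^{(k),1}\big)=h_n\big(w_{\l-\f12}^{(k),-1}\big)$ is then immediate: $t\mapsto -t$ carries the measure $w_{\l-\f12}^{(k),1}(t)\,\dd t$ to $w_{\l-\f12}^{(k),-1}(t)\,\dd t$, and these have equal total mass because $\int_{-1}^1 t\,w_{\l-\f12}^{(k)}(t)\,\dd t=0$; since it also sends $p_n\big(w_{\l-\f12}^{(k),1};\cdot\big)$ to $\pm p_n\big(w_{\l-\f12}^{(k),-1};\cdot\big)$, squaring and integrating yields the claim.

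The only genuine obstacle I expect is the bookkeeping in the first step: one must verify that the displayed quotients of cosines are polynomials of exactly the stated degrees, so that the right-hand side of \eqref{eq:OPk,1basis} has degree precisely $n$ — equivalently, that the apparent leading terms of its two summands do not cancel — and hence equals the $n$th orthogonal polynomial. Once this is settled, the change of variables $\phi\mapsto\t/2$, the sign chase behind \eqref{eq:OPk,-1basis}, and the reflection argument for the norms are entirely routine; alternatively, the right-hand sides of \eqref{eq:OPk,1basis} and \eqref{eq:OPk,-1basis} could be checked directly by verifying that they satisfy the generating function for $w_{\l-\f12}^{(k),1}$ coming from \eqref{eq:generating1} under the quadratic substitution, in the style of Proposition~\ref{prop:GGk2}.
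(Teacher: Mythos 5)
Your argument is correct and follows the paper's own route exactly: apply Proposition~\ref{prop:GGk} with $k$ replaced by $2k$ at the odd index $2km+2j+1$, invoke the second identity of Proposition~\ref{prop:OPk,1} and set $\t\to\t/2$ to obtain \eqref{eq:OPk,1basis}, then derive \eqref{eq:OPk,-1basis} and the norm equality from the reflection $t\mapsto -t$. Your additional polynomiality and degree checks are harmless but already subsumed by Proposition~\ref{prop:OPk,1}, which defines $p_n\big(w_{\l-\f12}^{(k),1}\big)$ precisely as the factor appearing in $p_{2n+1}\big(w_{\l-\f12}^{(2k)}\big)$.
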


\begin{proof}
By \eqref{eq:basis-a-1B} with $k$ replaced by $2k$, we have 
\begin{align*}
 p_{2k m+2j+1}\big (w_{\l -\f12}^{(2k)};\cos \t \big) = &\, \cos((2j+1)\t) C_m^{\l+1}(\cos(2k\t)) \\
         &   - \cos ((2k-2j-1)\t)C_{m-1}^{\l+1}(\cos(2k\t)),
\end{align*}
from which \eqref{eq:OPk,-1basis} follows from the second identity in \eqref{eq:OPk,1} by 
setting $\t \to \t/2$. 

To establish \eqref{eq:OPk,-1basis}, we observe that $p_{n}\big(w_{\l-\f12}^{(k),1}; - t \big)$ is an 
orthogonal polynomial with respect to the weight function $w_{\l-\f12}^{(k), -1}$, as can be seen
by changing variable $t \to -t$ in the orthogonality relation.  Hence, changing variable $\t \to \pi -\t$
in \eqref{eq:OPk,1basis} gives \eqref{eq:OPk,-1basis}. 
\end{proof}

We note that the righthand sides of  \eqref{eq:OPk,1basis} and  \eqref{eq:OPk,-1basis} can also 
be written in terms of the Jacobi polynomials by using
\begin{align*}
    \frac{\cos ((j+\f12) \t)}{\cos (\frac{\t}2)}&  \,= \frac{2^{2j} j!^2}{(2j)!} P_j^{(-\f12,\f12)}(\cos\t), \\ 
    \frac{\sin ((j+\f12) \t)}{\sin (\frac{\t}2)} & \,= \frac{2^{2j} j!^2}{(2j)!} P_j^{(\f12,- \f12)}(\cos\t). 
\end{align*}

We now derive a closed formula for the Poisson kernel $\phi_r\big(w_{\l-\f12}^{(k),1}\big)$. 

\begin{thm} \label{thm:poisson3}
For $k=2,3,4\ldots$, $0\le p \le k-1$ and $0 \le r <1$, 
\begin{align*}
  \phi_r & \left(w_{\l-\f12}^{(k),1}; t, \cos \left(\tfrac{2 p \pi}{k}\right)\right) =  \sum_{n=0}^\infty 
      \frac{p_{n}\big(w_{\l-\f12}^{(k),1}; t\big)p_{n}\big(w_{\l-\f12}^{(k),1};  \cos \f{2 p \pi}{k}\big)}
          {h_n \big(w_{\l-\f12}^{(k),1}\big)} r^n \\
    = &  \,    \frac{(1-r) \big( (1+r)^2 - 2 r (\cos (\frac{p \pi}{k}) + \cos \t) \big)
    }{ \big( (1+r^2)(1- 4 r \cos (\frac{p \pi}{k}) t +r^2) +4 r^2(t^2 - \sin^2(\frac{p \pi}{k}) )\big)
      \big(1-2(-1)^p r^k T_k (t) + r^{2k}\big)^{\l}} .
\end{align*}
\end{thm}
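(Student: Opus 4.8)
The plan is to derive the Poisson kernel for $w_{\l-\f12}^{(k),1}$ from the already-established Poisson kernel identities for the sieved Gegenbauer weights, exactly mirroring how \eqref{eq:poisson1} and \eqref{eq:poisson2} were obtained from \eqref{eq:generating}. The key bridge is Proposition~\ref{prop:OPk,1}, which relates $p_n\big(w_{\l-\f12}^{(k),1}\big)$ to the odd-index polynomials $p_{2n+1}\big(w_{\l-\f12}^{(2k)}\big)$ via $p_{2n+1}\big(w_{\l-\f12}^{(2k)};\cos\t\big) = \cos\t\, p_n\big(w_{\l-\f12}^{(k),1};\cos(2\t)\big)$ and $h_{2n+1}\big(w_{\l-\f12}^{(2k)}\big) = \tfrac12 h_n\big(w_{\l-\f12}^{(k),1}\big)$. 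First I would write down the Poisson kernel $\phi_r\big(w_{\l-\f12}^{(2k)}; s, \cos(\tfrac{q\pi}{2k})\big)$ from \eqref{eq:poisson1}, specializing $k\mapsto 2k$, and then extract the odd part in the summation index $n$ by forming $\tfrac12\big[\phi_r(\cdot) - \phi_{-r}(\cdot)\big]$ — i.e. replacing $r$ by $-r$ and subtracting, which kills the even powers of $r$ and keeps $\sum_n \tfrac{p_{2n+1}(s)p_{2n+1}(\cos\frac{q\pi}{2k})}{h_{2n+1}} r^{2n+1}$.

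Next I would substitute $s=\cos\t$, $\cos(\tfrac{q\pi}{2k})=\cos\varphi$ with the odd-index polynomials rewritten through Proposition~\ref{prop:OPk,1}: $p_{2n+1}\big(w_{\l-\f12}^{(2k)};\cos\t\big) = \cos\t\, p_n\big(w_{\l-\f12}^{(k),1};\cos 2\t\big)$. This turns the odd part of the $r^2$-series into $\cos\t\cos\varphi\,\sum_n \tfrac{2\, p_n(w_{\l-\f12}^{(k),1};\cos 2\t)\,p_n(w_{\l-\f12}^{(k),1};\cos 2\varphi)}{h_n(w_{\l-\f12}^{(k),1})} r^{2n+1}$, using $h_{2n+1}\big(w_{\l-\f12}^{(2k)}\big)=\tfrac12 h_n\big(w_{\l-\f12}^{(k),1}\big)$. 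Writing $R = r^2$, $T=\cos 2\t$, $S=\cos 2\varphi$, and identifying $\cos 2\varphi = \cos(\tfrac{q\pi}{k})$, I would then solve for $\phi_R\big(w_{\l-\f12}^{(k),1}; T, S\big)$ by dividing the odd part of the $I_{2k}$ kernel by $2r\cos\t\cos\varphi$. To match the theorem's statement, I take $q=2p$ so that $\cos(\tfrac{q\pi}{2k}) = \cos(\tfrac{p\pi}{k}) = \cos\varphi$ and $\cos 2\varphi = \cos(\tfrac{2p\pi}{k})$, with $(-1)^q = 1$; note also $T_{2k}(\cos\t) = T_k(\cos 2\t)$, so the $(1-2(-1)^q r^{2k}T_{2k}(\cos\t)+r^{4k})^\l$ factor in the $I_{2k}$ kernel becomes $(1-2 r^{2k}T_k(T)+r^{4k})^\l = (1-2R^k T_k(T)+R^{2k})^\l$ after the $R=r^2$ substitution — with the sign $(-1)^p$ reappearing once one checks the parity bookkeeping against the stated formula (the stated RHS has $(-1)^p$, which comes from the fact that $\cos(\tfrac{p\pi}{k})$ and $\cos(\tfrac{2p\pi}{k})=T$ are linked by $T = 2\cos^2(\tfrac{p\pi}{k})-1$ and $T_k$ evaluated appropriately).

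The main obstacle will be the algebra of extracting the odd part and simplifying the resulting rational function into the compact closed form displayed in the theorem. Concretely, after substituting into \eqref{eq:poisson1} with $k\mapsto 2k$, I must compute
\begin{align*}
\tfrac12\Big[\phi_r\big(w_{\l-\f12}^{(2k)};\cos\t,\cos\tfrac{p\pi}{k}\big) - \phi_{-r}\big(w_{\l-\f12}^{(2k)};\cos\t,\cos\tfrac{p\pi}{k}\big)\Big]
\end{align*}
and the denominators of the two terms differ (one has $(1-2r\cos\tfrac{p\pi}{k}\cos\t + r^2)^2 - 4r^2\sin^2\tfrac{p\pi}{k}(1-\cos^2\t)$, the other the same with $r\mapsto -r$), so combining over a common denominator produces a somewhat intricate expression whose numerator must be shown to factor as $2r\cos\t\cos\tfrac{p\pi}{k}$ times the target numerator $(1-r^2)\big((1+r^2)^2 - 2r^2(\cos\tfrac{p\pi}{k}+\cos\t)\cdot(\text{something})\big)$ — wait, more carefully, the target numerator after the $R=r^2$ rescaling is $(1-R)\big((1+R)^2 - 2R(\cos\tfrac{p\pi}{k}+\cos 2\t)\big)$ in the variable $R$, i.e. a low-degree polynomial in $r$, and the denominator must collapse from a quartic-in-$r$ product to the stated $\big((1+R^2)(1-4R\cos\tfrac{p\pi}{k}\,\cos 2\t + R^2) + 4R^2(\cos^2 2\t - \sin^2\tfrac{p\pi}{k})\big)$ form. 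I would verify these factorizations either by direct trigonometric identities (expressing everything in exponentials $e^{\i\t}$ and $e^{\i p\pi/k}$ and using Lemma~\ref{lem:trig-identity}) or, as the paper does elsewhere, "with the help of a computer algebra system"; the $(1-2(-1)^p r^{2k}T_{2k}(\cos\t)+r^{4k})^\l$ factor is untouched by the odd-part operation up to the $r\mapsto -r$ sign on $r^{2k}$ (which is trivial since $2k$ is even), so it carries through cleanly, and I must only double-check that the claimed sign $(-1)^p$ in $(1-2(-1)^p r^k T_k(t) + r^{2k})^\l$ is correct after $R=r^2$ — indeed $T_{2k}(\cos\t)=T_k(\cos 2\t)=T_k(t)$ with $t=\cos 2\t$, and the $I_{2k}$ formula has $(-1)^{2p}=1$ in front, but re-expressing $r^{2k}T_k(t)$ via $R=r^2$ gives $R^k T_k(t)$ with coefficient $+1$; the $(-1)^p$ in the statement arises from the separate identity $T_k(\cos 2\t) = $ a shift, or more plausibly is a typo-free consequence of the parametrization $\cos(\tfrac{2p\pi}{k})$ vs $\cos(\tfrac{p\pi}{k})$ that I would resolve by checking the $p=0$ and $p=k$ endpoints.
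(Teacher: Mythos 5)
Your proposal follows essentially the same route as the paper: both use Proposition~\ref{prop:OPk,1} to identify $\phi_{r^2}\big(w_{\l-\f12}^{(k),1};\cos 2\t,\cos 2\phi\big)$ with the odd-in-$r$ part of $\phi_{r}\big(w_{\l-\f12}^{(2k)};\cos\t,\cos\phi\big)$ divided by $2r\cos\t\cos\phi$, compute that kernel from the closed form \eqref{eq:poisson1} with $k\mapsto 2k$ at $\phi=\tfrac{p\pi}{k}$, and then rescale $r^2\to r$, $2\t\to\t$. The only cosmetic difference is that you isolate the odd part by antisymmetrizing in $r$ (forming $\tfrac12[\phi_r-\phi_{-r}]$), whereas the paper subtracts $\phi_{r^2}\big(w_{\l-\f12}^{(k)};\cos 2\t,\cos 2\phi\big)$ directly; by the first identity of Proposition~\ref{prop:OPk,1} these are the same quantity, and both arguments defer the final rational-function simplification (including the $(-1)^p$ bookkeeping you rightly flag as delicate) to direct verification.
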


\begin{proof}
From the definition of Poisson kernels, we obtain from \eqref{eq:OPk,1} and \eqref{eq:OPk,1norm} that 
\begin{align*}
  \phi_r\big(w_{\l-\f12}^{(2k)}; \cos \t,\cos \phi\big) = \, & \phi_{r^2} \big (w_{\l-\f12}^{(k)}; \cos(2 \t), \cos(2 \phi)\big) \\
     & +  2 r \cos \t \cos \phi \, \phi_{r^2} \big (w_{\l-\f12}^{(k),1}; \cos(2 \t), \cos(2 \phi)\big).
\end{align*}
The identity \eqref{eq:poisson1} allows us to derive a closed form for 
$$
\phi_r\big(w_{\l-\f12}^{(2k)}; \cos \t,\cos (\tfrac{p \pi}{k})\big) - 
      \phi_{r^2} \big (w_{\l-\f12}^{(k)}; \cos(2 \t), \cos (\tfrac{2 p \pi}{k})\big)
$$
as a ratio, which has the denominator as a product 
\begin{align*}
& \big((1+r^4)(1- 4 r^2 \cos (\tfrac{p \pi}{k}) \cos (2\t)+r^4) +2 r^4 \big( \cos (\tfrac{2 p \pi}{k}) + \cos (4 \t)\big) \big) \\
& \quad \times \big(1-2(-1)^p r^{2k} \cos (2k \t) + r^{4k}\big)^{\l}
\end{align*}
and the nominator as 
$$
 2 r \cos \t \cos (\tfrac{p \pi}{k}) (1-r^2) \big((1+r^2)^2 - 2 r^2 (\cos (\tfrac{p \pi}{k})+ \cos (2 \t)) \big).
$$
Removing $ 2 r \cos \t \cos (\tfrac{p \pi}{k})$ in the nominator and changing $r^2$ to $r$ and $2 \t$ to $\t$,
we obtain a closed expression for $\phi_r\big(w_{\l-\f12}^{(2k),1}; \cos \t,\cos (\tfrac{p \pi}{k})\big)$, which 
is the stated result once we rewrite the final formula in $t = \cos \t$. 
\end{proof}

Using the fact that $p_{n}\big(w_{\l-\f12}^{(k),1}; - t \big)$ is an orthogonal polynomial with respect
to $w_{\l-\f12}^{(k),-1}$, we could derive a similar formula for $ \phi_r 
\big(w_{\l-\f12}^{(k),-1}; t, \cos \left(\pi - \tfrac{2 p \pi}{k}\right)\big)$. We shall do so only for the 
case when $p=0$, which is stated in the corollary below. 

\begin{cor} \label{cor:poisson3}
For $k=2,3,4\ldots$ and $0 \le r <1$, 
\begin{align} \label{eq:poisson3B}
 \sum_{n=0}^\infty 
 \frac{p_{n}\big(w_{\l-\f12}^{(k),1}; t\big)p_{n}\big(w_{\l-\f12}^{(k),1}; 1\big)} {h_n \big(w_{\l-\f12}^{(k),1}\big)} r^n 
      =   \frac{1-r}{(1- 2 r t + r^2) (1-2 r^k T_k (t) + r^{2k})^{\l} },  
\end{align}
and, furthermore, 
\begin{align} \label{eq:poisson3C}
  \sum_{n=0}^\infty 
   \frac{p_{n}\big(w_{\l-\f12}^{(k),-1}; t\big)p_{n}\big(w_{\l-\f12}^{(k),-1}; -1\big)} {h_n \big(w_{\l-\f12}^{(k),-1}\big)} r^n
      =           \frac{1-r}{(1+ 2 r t + r^2) (1-2 r^k T_k (-t) + r^{2k})^{\l} }. 
\end{align}
\end{cor}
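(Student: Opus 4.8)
The plan is to deduce Corollary~\ref{cor:poisson3} directly from Theorem~\ref{thm:poisson3} together with the relationship between the polynomials $p_n\big(w_{\l-\f12}^{(k),1}\big)$ and $p_n\big(w_{\l-\f12}^{(k),-1}\big)$ already recorded in Proposition~\ref{prop:OPk,1basis}. First I would obtain \eqref{eq:poisson3B}: set $p=0$ in Theorem~\ref{thm:poisson3}, so that $\cos\left(\tfrac{2p\pi}{k}\right)=1$ and $\cos\left(\tfrac{p\pi}{k}\right)=1$, and $(-1)^p=1$. The numerator becomes $(1-r)\big((1+r)^2-2r(1+\cos\t)\big)$, while the factor $(1+r^2)(1-4rt+r^2)+4r^2(t^2-1)$ in the denominator should simplify, after substituting $t=\cos\t$ and using $\cos^2\t=(1+\cos 2\t)/2$ or direct algebra, to $(1-2rt+r^2)\big((1+r)^2-2r(1+\cos\t)\big)$; cancelling the common factor $(1+r)^2-2r(1+\cos\t)$ leaves exactly \eqref{eq:poisson3B}. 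I would present this cancellation as a short direct computation.

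Next I would derive \eqref{eq:poisson3C} from \eqref{eq:poisson3B}. The key observation, already used in the proof of Proposition~\ref{prop:OPk,1basis}, is that $p_n\big(w_{\l-\f12}^{(k),-1};t\big)$ equals $\pm p_n\big(w_{\l-\f12}^{(k),1};-t\big)$ (up to the sign $(-1)^n$, which is immaterial in the Poisson kernel since it enters squared or paired), because replacing $t\mapsto -t$ turns the weight $(1+t)$ into $(1-t)$ and leaves $(1-t^2)^{\l-\f12}|U_{k-1}(t)|^{2\l}$ invariant (using $U_{k-1}(-t)=(-1)^{k-1}U_{k-1}(t)$). Moreover $h_n\big(w_{\l-\f12}^{(k),-1}\big)=h_n\big(w_{\l-\f12}^{(k),1}\big)$, also from Proposition~\ref{prop:OPk,1basis}. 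Therefore
$$
 \sum_{n=0}^\infty \frac{p_n\big(w_{\l-\f12}^{(k),-1};t\big)p_n\big(w_{\l-\f12}^{(k),-1};-1\big)}{h_n\big(w_{\l-\f12}^{(k),-1}\big)}r^n
 = \sum_{n=0}^\infty \frac{p_n\big(w_{\l-\f12}^{(k),1};-t\big)p_n\big(w_{\l-\f12}^{(k),1};1\big)}{h_n\big(w_{\l-\f12}^{(k),1}\big)}r^n,
$$
which is the right-hand side of \eqref{eq:poisson3B} evaluated at $-t$ in place of $t$. Substituting $t\mapsto -t$ in the closed form of \eqref{eq:poisson3B} gives $\dfrac{1-r}{(1+2rt+r^2)(1-2r^kT_k(-t)+r^{2k})^{\l}}$, which is precisely \eqref{eq:poisson3C}.

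I expect the only nontrivial point to be the algebraic simplification in the first step, i.e.\ verifying that the denominator factor at $p=0$ really does factor as $(1-2rt+r^2)\big((1+r)^2-2r(1+\cos\t)\big)$ so that the cancellation against the numerator is legitimate; this is a routine but slightly delicate polynomial identity in $r$ and $t=\cos\t$ that I would check by expanding both sides. The sign-bookkeeping in the second step (tracking the $(-1)^n$ factors and confirming they cancel in the bilinear kernel) is the other place to be careful, but it is conceptually straightforward given Proposition~\ref{prop:OPk,1basis}. No new ideas beyond specialization and the $t\mapsto -t$ symmetry are needed.
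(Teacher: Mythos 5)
Your proposal is correct and follows essentially the same route as the paper: \eqref{eq:poisson3B} is obtained by setting $p=0$ in Theorem~\ref{thm:poisson3} (the denominator factor indeed collapses to $(1-2rt+r^2)^2$ since $(1+r^2)(1-4rt+r^2)+4r^2t^2=\big((1+r^2)-2rt\big)^2$, matching your claimed cancellation), and \eqref{eq:poisson3C} follows from the reflection $t\mapsto -t$ together with $h_n\big(w_{\l-\f12}^{(k),1}\big)=h_n\big(w_{\l-\f12}^{(k),-1}\big)$, exactly as in the paper's proof.
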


\begin{proof}
The identity \eqref{eq:poisson3B} is the case $p= 0$ of the identity in Theorem \ref{thm:poisson3}. The
identity  \eqref{eq:poisson3C} follows from the fact that $p_{n}\big(w_{\l-\f12}^{(k),1}; - t \big)$ is an 
orthogonal polynomial with respect to $w_{\l-\f12}^{(k),-1}$ and $h_n(w_{\l-\f12}^{(k),1}) = h_n(w_{\l-\f12}^{(k),-1})$. 
\end{proof}

In the case $k=1$, these formulas give simple generating functions for the Jacobi polynomials 
$P_n^{(\l+\f12,\l-\f12)}(t)$ and $P_n^{(\l-\f12,\l+\f12)}(t)$, respectively, which we state below:
\begin{align}\label{eq:JacobiGenerat1}
 \sum_{n=0}^\infty  \frac{(2n+2\l+1) (2\l+1)_n}{(2\l+1)(\l+\f32)_n} P_n^{(\l-\f12,\l+\f12)}(t)  r^n 
   &\,  =   \frac{1-r}{(1- 2 r t+ r^2)^{\l+1} }, \\
 \sum_{n=0}^\infty  \frac{(2n+2\l+1) (2\l+1)_n}{(2\l+1)(\l+\f32)_n} P_n^{(\l+\f12,\l-\f12)}(t)  r^n 
     &\, =   \frac{1+r}{(1-2 r t+ r^2)^{\l+1} },   \label{eq:JacobiGenerat2}
\end{align}
In the case of the second identity, we have used $P_n^{(\a,\b)}(-1) = (-1)^n P_n^{(\b,\a)}(1)$
and replaced $r$ by $-r$. As far as we are aware, these identities are new.

\section{Product formula and intertwining operator}
\setcounter{equation}{0}

The generating functions that we derived in the previous sections lead to several integral representations
of the corresponding orthogonal polynomials. We state one such result as an example. 

\begin{thm}
Let $\l \ge 0$. For $k= 1, 2, 3, \ldots,$ and $n = 0,1,2,\ldots$, 
\begin{align} \label{eq:k-Gegen}
 & \frac{p_n \big(w_{\l-\f12}^{(k)}; \cos \t \big)p_n \big(w_{\l-\f12}^{(k)}; 1 \big)}{h_n\big(w_{\l-\f12}^{(k)}\big)} 
    = \frac{n + k \l}{k \l} a_\l^{(k)} \\
  & \qquad\qquad\qquad
     \times  \int_{T^{k-1}} C_n^{k\l} \Bigg( \sum_{j=0}^{k-1} \cos \left(\t - \frac{2 j\pi}{k}\right) u_j \Bigg) 
       u_0^\l \prod_{i=1}^{k-1} u_i^{\l-1}\dd u. \notag
\end{align}
In particular, for $m = 0,1,2,\ldots$, 
\begin{align} \label{eq:k-Gegen2}
 C_m^{\l}(\cos (k\t) ) = a_\l^{(k)} \int_{T^{k-1}}  & 
   C_{km}^{k\l} \Bigg( \sum_{j=0}^{k-1} \cos \left(\t - \frac{2 j\pi}{k}\right) u_j \Bigg) u_0^\l \prod_{i=1}^{k-1} u_i^{\l-1}\dd u. 
\end{align}
\end{thm}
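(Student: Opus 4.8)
The plan is to obtain \eqref{eq:k-Gegen} by feeding the generating function identity \eqref{eq:generating1} into the integral representation of the intertwining operator, then specialize to get \eqref{eq:k-Gegen2}.

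First I would recall that the left-hand side of \eqref{eq:k-Gegen} is, by definition, the coefficient of $r^n$ in the Poisson kernel $\phi_r\big(w_{\l-\f12}^{(k)}; t, 1\big)$, which by Corollary \ref{cor:generating}, equation \eqref{eq:generating1}, equals
\begin{equation*}
  \frac{1-r^2}{(1-2rt+r^2)(1-2r^k T_k(t) + r^{2k})^\l}, \qquad t = \cos\t.
\end{equation*}
On the other hand, by Lemma \ref{lem:trig-identity} with $\t$ in place of the variable there, we have $1 - 2 r^k T_k(\cos\t) + r^{2k} = \prod_{j=0}^{k-1}\big(1 - 2r\cos(\t - \tfrac{2j\pi}{k}) + r^2\big)$, since $T_k(\cos\t) = \cos(k\t)$. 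Hence the Poisson kernel equals
\begin{equation*}
  (1-r^2)\prod_{j=0}^{k-1}\frac{1}{\big(1 - 2r\cos(\t - \tfrac{2j\pi}{k}) + r^2\big)^{\mu_j}},
\end{equation*}
with exponents $\mu_0 = \l+1$ (the extra factor $(1-2rt+r^2)^{-1}$ absorbed into the $j=0$ term) and $\mu_j = \l$ for $1 \le j \le k-1$; the total is $|\boldsymbol\mu| = k\l + 1$. Applying Lemma \ref{lem:integral} with these exponents — the gamma-factor ratio being $\Gamma(k\l+1)/\big(\Gamma(\l+1)\Gamma(\l)^{k-1}\big) = a_\l^{(k)-1}\cdot\l/\l$, matching $a_\l^{(k)} = \l\Gamma(\l)^k/\Gamma(k\l+1)$ after noting $\Gamma(\l+1)=\l\Gamma(\l)$ — converts the product into
\begin{equation*}
  a_\l^{(k)}(1-r^2)\int_{T^{k-1}} \frac{1}{\big(1 - 2r\sum_{j=0}^{k-1}\cos(\t - \tfrac{2j\pi}{k})u_j + r^2\big)^{k\l+1}} \, u_0^\l\prod_{i=1}^{k-1}u_i^{\l-1}\dd u,
\end{equation*}
where I have paired the exponent $\l+1$ on the $u_0$ slot with $u_0^{\l+1-1} = u_0^\l$. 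Now I expand the integrand via the Gegenbauer generating function \eqref{eq:Gegen2}: $\frac{1-r^2}{(1-2rs+r^2)^{\nu+1}} = \sum_{n\ge 0}\frac{n+\nu}{\nu}C_n^\nu(s)r^n$ with $\nu = k\l$ and $s = \sum_j\cos(\t-\tfrac{2j\pi}{k})u_j$. Matching the coefficient of $r^n$ on both sides yields \eqref{eq:k-Gegen} directly.

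For \eqref{eq:k-Gegen2} I would specialize to $n = km$ and $\t$ a vertex angle is not needed — rather I use Proposition \ref{prop:GGk}: for $n = km$ (i.e. $j=0$), formula \eqref{eq:basis-a-1normA} gives $p_{km}(w_{\l-\f12}^{(k)};1)/h_{km}(w_{\l-\f12}^{(k)}) = 2(m+\l)/(m+2\l)$, and \eqref{eq:basis-a-1B} with $j=0$ gives $p_{km}(w_{\l-\f12}^{(k)};\cos\t) = C_m^{\l+1}(\cos k\t) - C_{m-1}^{\l+1}(\cos k\t) = \frac{m+2\l}{2\l}C_m^\l(\cos k\t)$ by \eqref{eq:Gegen-recur}. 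Substituting $n=km$ into \eqref{eq:k-Gegen}, the left side becomes $\frac{m+2\l}{2\l}C_m^\l(\cos k\t)\cdot\frac{2(m+\l)}{m+2\l} = \frac{m+\l}{\l}C_m^\l(\cos k\t)$, while the prefactor on the right is $\frac{km+k\l}{k\l}a_\l^{(k)} = \frac{m+\l}{\l}a_\l^{(k)}$; cancelling $\frac{m+\l}{\l}$ gives \eqref{eq:k-Gegen2}.

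The main obstacle is the bookkeeping in the second paragraph: correctly distributing the exponents (the $(1-2rt+r^2)^{-1}$ factor must merge with exactly the $j=0$ term of the product, using $t = \cos\t = \cos(\t - 0)$, so that the $u_0$ slot carries weight $\l$ rather than $\l-1$), and verifying that the gamma-function constant produced by Lemma \ref{lem:integral} is precisely $a_\l^{(k)}$ — this is where a sign or off-by-one slip would hide. Everything else is a routine coefficient comparison in $r$, justified by absolute convergence for $0 \le r < 1$ and $r|x_i| < 1$ as required in Lemma \ref{lem:integral}.
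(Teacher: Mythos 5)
Your argument is correct and follows the paper's own proof exactly: the paper derives \eqref{eq:k-Gegen} from \eqref{eq:generating1}, Lemmas \ref{lem:integral} and \ref{lem:trig-identity}, and the Gegenbauer generating function \eqref{eq:Gegen2}, and obtains \eqref{eq:k-Gegen2} by setting $n=km$ and invoking Proposition \ref{prop:GGk} together with \eqref{eq:Gegen-recur} --- precisely your route, including the absorption of the factor $(1-2rt+r^2)^{-1}$ into the $j=0$ slot so that $u_0$ carries exponent $\l$. The only wrinkle is your constant check, where you identify the gamma ratio with both $a_\l^{(k)}$ and its reciprocal in the same breath; this traces to the paper's displayed formula for $a_\l^{(k)}$ being the reciprocal of the normalizing constant determined by $V_\l 1=1$, and your final expression carries the constant the theorem asserts.
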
 

The integral representation \eqref{eq:k-Gegen} follows from \eqref{eq:generating1}, 
Lemmas \ref{lem:integral} and \ref{lem:trig-identity}, and \eqref{eq:Gegen2}. Setting $n = k m$ in
\eqref{eq:k-Gegen} gives \eqref{eq:k-Gegen} by Proposition \ref{prop:GGk} and \eqref{eq:Gegen-recur}. 

When $k =2$, the identity \eqref{eq:k-Gegen2} is a special case of the identity  \cite[Theorem 1.5.6]{DX}
\begin{align} \label{eq:GGproduct0}
  C_m^{(\l,\mu)}(t) = c_\mu \int_{-1}^1 C_m^{\l+\mu} (t u) (1+u)(1-u^2)^{\mu-1} \dd u
\end{align}
for the generalized Gegenbauer polynomials defined in \eqref{eq:GGpoly}, since it can be easily verified that
$C_{2m}^{(\l,\l)}(\cos\t) = C_m^\l (\cos 2 \t)$ and  $\cos \t u_0 + \cos (\t - \pi) u_1 = \cos \t (1- 2 u_1)$ with 
$u_0 = 1-u_1$. Furthermore, the generalized Gegenbauer polynomials satisfy a product formula \cite[(2.10)]{X97} 
\begin{align} \label{eq:GGproduct}
  & \frac{C_m^{(\l,\mu)}(\cos \t) C_m^{(\l,\mu)}(\cos \phi)} {C_m^{(\l,\mu)}(1)} = \frac{m+\l+\mu}{\l+\mu}   c_\l c_\mu \\
   & \times 
    \int_{-1}^1\int_{-1}^1 C_m^{\l+\mu} (t \cos \t \cos \phi + s \sin \t \sin \phi) (1+t) (1-t^2)^{\mu-1} (1-s^2)^{\l-1} \dd t \dd s, \notag
\end{align}
which reduces to, when $m =2n$, the product formula for the Jacobi polynomials proved in \cite[p.192, (2.5)]{DK}, 
see also \cite[p.133]{K}. Evidently \eqref{eq:GGproduct} becomes \eqref{eq:GGproduct0} when $\phi =0$. The 
identity \eqref{eq:GGproduct} can be deduced from the identity \eqref{eq:h-zonal} for the $h$-harmonics and the 
closed formula of the intertwining operator for the group $I_2 = \ZZ_2 \times \ZZ_2$ as shown in \cite{X97}. 

For the dihedral group, the identity \eqref{eq:h-zonal} gives a product formula for the $h$-harmonics, from which 
we can deduce a product formulas for orthogonal polynomials $p_n(w_{\l-\f12}; \cdot)$. The identities \eqref{eq:k-Gegen}
and \eqref{eq:k-Gegen2} are consequences of our partial closed form of the intertwining operator in 
Theorem \ref{thm:main}. A full closed form of the intertwining operator would lead to a product formula for
$p_n(w_{\l-\f12}; \cdot)$ that will be a generalization of \eqref{eq:GGproduct}.  In view of \eqref{eq:GGproduct0}
and \eqref{eq:GGproduct}, however, a full closed form could be much more involved and is still not known at
this point.

\end{document}